\newcolumntype{M}[1]{>{\centering\arraybackslash}m{#1}}
\newcolumntype{P}[1]{>{\centering\arraybackslash}p{#1}}
\numberwithin{equation}{section}
\newtheorem{example}{Example}[section]
\newtheorem{thm}{Theorem}[section]
\newtheorem{cor}{Corollary}[section]
\newtheorem{note}{Note}[section]
\newtheorem{notation}{Notation}[section]
\newtheorem{notations}{Notations}[section]
\newtheorem{defn}{Definition}[section]
\begin{document}
	\vspace{-2cm}
	\markboth{R. Rajkumar and R. Vishnupriya}{Tensor join of hypergraphs and their spectra}
	\title{\LARGE\bf Tensor join of hypergraphs and its spectra}
	\author{R. Vishnupriya\footnote{e-mail: {\tt rrvmaths@gmail.com}},\ \ \
		R. Rajkumar\footnote{e-mail: {\tt rrajmaths@yahoo.co.in (Corresponding Author) } }\ \\
		{\footnotesize Department of Mathematics, The Gandhigram Rural Institute  (Deemed to be University),}\\ \footnotesize{Gandhigram -- 624 302, Tamil Nadu, India}\\[3mm]
	}
	\date{}
	\maketitle
	
	
	\begin{abstract}
	In this paper, we introduce three operations on hypergraphs by using tensors. We show that these three formulations are equivalent and we commonly call them as the tensor join. We show that any hypergraph can be viewed as a tensor join of hypergraphs. Tensor join enable us to obtain several existing and new classes of operations on hypergraphs.  We compute the adjacency, the Laplacian, the normalized Laplacian spectrum of weighted hypergraphs constructed by this tensor join. Also we deduce some results on the spectra of hypergraphs in the literature. As an application, we construct several pairs of the adjacency, the Laplacian, the normalized Laplacian cospectral hypergraphs by using the tensor join.
		
		
		\vspace{0.28cm}
		
		\noindent \textbf{Keywords:} Hypergraphs, Tensor join,  Adjacency spectrum, Laplacian spectrum, Normalized Laplacian spectrum, Cospectral hypergraphs.
		\vspace{0.28cm}
		
		\noindent	\textbf{2020 Mathematics Subject Classification:}  05C50, 05C65, 05C76, 15A18
	\end{abstract}
	\section{Introduction}
	In spectral graph theory, the properties of graphs are investigated by the eigenvalues of various associated matrices, such as adjacency matrix, Laplacian matrix, signless Laplacian matrix, normalized Laplacian matrix etc; see,~\cite{cvetkovic2010introduction}. Likewise, in spectral hypergraph theory, spectra of different connectivity tensors and matrices associated to hypergraphs were studied in the literature; see,~\cite{agarwal2006higher,banerjee2021spectrum,banerjee2017spectra,cooper2012spectra,pearson2014spectral,qi2017tensor,rodri2002laplacian}.  Recently,  Anirban Banerjee~\cite{banerjee2021spectrum} introduced some connectivity matrices namely, the adjacency matrix, the Laplacian matrix and the normalized Laplacian matrix for unweighted hypergraphs.
	Therein, some of the properties of hypergraphs were studied using the spectrum of these associated matrices. Subsequently, Amitesh Sarkar and Anirban Banerjee~\cite{sarkar2020joins} extend the definiton of the adjacency matrix of a hypergraph introduced in~\cite{banerjee2021spectrum} to a weighted hypergraph. 
	In the rest of this paper, we consider the matrix representation of hypergraphs defined in~\cite{sarkar2020joins}.
	
	In the literature, several graph operations were defined and the spectra of graphs constructed by these graph operations were determined; see~\cite{cvetkovic2010introduction,gayathri2019adjacency, pavithra2021bowtie, pavithra2021spectra,rajkumar2019spectra,rajkumar2021spectra,rajkumar2020spectra} and the references therein. Recently, Gayathri and Rajkumar~\cite{murugesan2021spectra} introduced a graph operation, namely, $\mathcal{M}$-join. Using this operation several new graph operations were defined and various graph operations in the literature were generalized. There in, the spectral properties of these graphs were investigated. In this direction, there are several hypergraph operations were defined in literature; see the survey paper~\cite{hellmuth2012survey}. In~\cite{sarkar2020joins}, several hypergraph operations, such as the weighted join, the generalized corona were introduced and the adjacency spectra of the hypergraphs formed by these operations were determined. Also some families of cospectral hypergraphs with respect to the adjacency matrix were constructed using these operations. The adjacency spectra of the Cartesian product of hypergraphs was obtained in~\cite{banerjee2021spectrum}.

	Motivated by these, in this paper, we introduce some operations on hypergraphs via tensors. We obtain the spectra of the adjacency, the Laplacian, the normalized Laplacian matrices of the hypergraphs constructed by these operations. 
	
	
	The rest of the paper is arranged as follows: In Section~\ref{Sec1}, we recall some basic notations, definitions and results of graphs/hypergraphs and matrices. In Section~\ref{Sec2}, we introduce a special type of tensor, namely an indicating tensor corresponding to a finite sequence of mutually disjoint sets. Also, we define several particular cases of this tensor. In Section~\ref{defn sec}, we introduce three hypergraph operations by using indicating tensors. We show that these three formulations are equivalent and we commonly call them as the tensor join. We show that any hypergraph can be viewed as a tensor join of hypergraphs.  Tensor join enable us to obtain several existing and new classes of operations on hypergraphs.  In Section~\ref{Sec4}, we compute the spectrum of the adjacency, the Laplacian and the normalized Laplacian matrices of weighted hypergraphs constructed by the tensor join operations introduced in the previous section. Also we deduce some existing results on spectra of hypergraphs. By using the results proved in this section, we construct infinite families of simultaneously adjacency, Laplacian, normalized Laplacian cospectral hypergraphs by using this tensor join operation.
	\section{Preliminaries and notations}\label{Sec1}
	A \textit{hypergraph} $H(V,E)$ consists of a non-empty set $V$ and a multiset $E$ of subsets of $V$.
	The elements of $V$ are called \textit{vertices} and the elements of $E$ are called \textit{hyperedges}, or simply \textit{edges} of $H$. An edge of cardinality one is called a \textit{loop}.
	The \textit{rank} and the \textit{co-rank} of a hypergraph $H$ are defined as $r(H)=\displaystyle \underset{e\in E}{max}\{|e|\}$ and $\rho(H)=\displaystyle \underset{e\in E}{min}\{|e|\}$ respectively. A hypergraph is said to be \textit{uniform} if all of it's edges have the same cardinality. If it is $m$, then the hypergraph is said to be \textit{$m$-uniform}; otherwise, it is called \textit{non-uniform}. A vertex of a hypergraph is said to be isolated if it does not belong to any edge of that hypergraph. 
	Throughout this paper, we consider only hypergraphs having finite number of vertices.
	
	Let $\mathcal{P}^*(A)$ denote the set of all non-empty subsets of a set $A$.
	A hypergraph $H(V,E)$ is said to be \textit{complete}
	if $E=\mathcal{P}^*(V)$. We denote the \textit{complete hypergraph} on $n$ vertices with no loops as $K_n$.
	For, $0\leq r\leq n$, the \textit{complete $r$-uniform hypergraph} on $n$ vertices, denoted by $K_n^r$, is the hypergraph whose edge set is the set of all possible $r$-subsets of $V$.
	
	For a nonempty subset $S$ of positive integers, a
	\textit{$S$-hypergraph} on $V$ is a hypergraph with vertex set $V$ and edge set $E=\bigcup_{s\in S}E_s$,
	where $E_s$ is a non-empty set of $s$-subsets of $V$. The \textit{complement of a
		$S$-hypergraph} $H(V,E)$, denoted by $H^c(V,E^c)$ is the $S$-hypergraph on $V$ whose edge set consists of the
	subsets of $V$ with cardinality in $S$ which do not lie in $E$ \cite{gosselin2012self}. The \textit{degree of a vertex} $v$ in a hypergraph $H$, denoted by $d(v)$, is the number of edges containing $v$ in $H$. 
	
	
	\begin{defn}(\cite{sarkar2020joins}) \normalfont
		Let $H(V,E,W)$ be a             hypergraph with vertex set $V=\{1,2,\dots,n\},$ edge set $E$ and a
		weight function $W:E \rightarrow \mathbb{R}_{\geq 0}$ defined by $W(e) = w_e$ for all $e\in E$. The \textit{adjacency matrix}\label{ad1} $A(H)$ of $H(V,E,W)$ is the $n\times n$ symmetric matrix in which
		$$(i,j)\textnormal{-th entry of } A(H) =\begin{cases}
			\underset{e\in E;i,j\in e}{\sum}\frac{w_e}{|e|-1} & \textnormal{if}~i\neq j, ~i~\textnormal{and}~j~\textnormal{are adjacent;}\\
			~~~~~~~	0 &
			\textnormal{otherwise}.
		\end{cases}$$
	\end{defn}
	If we take $w_e=1$, then $A(H)$ becomes the adjacency matrix of the unweighted hypergraph $H(V,E)$ defined in \cite{banerjee2021spectrum}. 
	The \textit{valency} of a vertex $i$ of $H$, denoted by $d(i)$ is defined as $d(i)=\underset{e\in E;i\in e}{\sum}w_e$. 
	The \textit{Laplacian matrix} $L(H)$ of $H(V,E,W)$ is defined by $L(H)=D(H)-A(H)$, where $D(H)$ is the diagonal matrix whose entries are the valencies $d(i)$ of the vertices $i$ of $H$. If the hypergraph $H(V,E,W)$ has no isolated vertices, then
	its \textit{normalized Laplacian matrix} $\mathcal{L}(H)$ is defined as $\mathcal{L}(H)=D(H)^{-1/2}L(H)D(H)^{-1/2}$.
	
	A weighted/unweighted hypergraph is said to be \textit{$r$-regular} if valency/degree of each of its vertices is $r$.
	
	For a matrix $M$, we use the notation $P_M(x)$ to denote its characteristic polynomial and $\sigma(M)$ to denote its multiset of eigenvalues (spectrum). The spectrum of $A(H)$, $L(H)$ and $\mathcal{L}(H)$ are said to be the \textit{$A$-spectrum}, the \textit{$L$-spectrum} and the \textit{$\mathcal{L}$-spectrum} of the hypergraph $H$, respectively. Two hypergraphs are said to be \textit{$A-$cospectral (resp. $L-$cospectral, $\mathcal{L}-$cospectral)} if they have the same $A$-spectrum (resp. $L$-spectrum, $\mathcal{L}$-spectrum). The largest eigenvalue of $A(H)$ is said to be the \textit{Perron adjacency eigenvalue of $H$}, whereas its other eigenvalues are said to be the \textit{non-Perron adjacency eigenvalues of $H$}.
	
		Let $A_1,A_2,\ldots,A_m$ be square matrices of order $n$ with entries from $\mathbb C$. Then $\lambda_1,\lambda_2,\ldots,\lambda_m$ $\in \mathbb C$ are said to be \textit{
		co-eigenvalues of $A_1,A_2,\ldots,A_m$}, if there exists a vector $X\in\mathbb C^n$ such that $A_iX=\lambda_iX$ for $i=1,2,\ldots,m$~\cite{gayathrithesis}.
	
	Let $I_n$ denote the identity matrix of size $n\times n$ and $J_{n\times m}$ denote the matrix of size $n\times m$ whose all the entries are $1$. In particular, we denote $J_{n\times n}$ simply as $J_n$. The Kronecker product of two matrices $A$ and $B$ is denoted by $A\otimes B$.


	Let $G_1$ and $G_2$ be graphs on $m$ and $n$ vertices, respectively. Let $\pi$ be a binary relation, that is $\pi\subseteq V(G_1)\times V(G_2)$. Then the \textit{$\pi-$graph of $G_1$ and $G_2$}, is the graph whose vertex set is $V(G_1)\cup V(G_2)$ and edge set is $E(G_1)\cup E(G_2)\cup \pi$~\cite{hedetniemi1969classes}.
	An equivalent formulation of this definition is given as follows~\cite{murugesan2021spectra}: Write the binary relation $\pi$ as a $0-1$ matrix $N=(n_{ij})$ of size $m\times n$ in which $n_{ij}=1$ if and only if the $i$-th vertex of $G_1$ and the $j$-th vertex of $G_2$ are related with respect to $\pi$, so the $\pi-$graph of $G_1$ and $G_2$ is the graph obtained by taking one copy of $G_1$ and $G_2$, and joining the $i$-th vertex of $G_1$ to the $j$-th vertex of $G_2$ if and only if $n_{ij}=1$ for $i=1,2,\ldots, n$ and $j=1,2,\ldots,m$. This graph is denoted by $G_1\vee_N G_2$ and is called the~
	\textit{$N$-join of $G$ and $H$}. This definition is extended as follows.
	
	\begin{defn}\label{M join defn} (\cite{murugesan2021spectra})\label{mjoin}
		\normalfont
		Let $\mathcal{H}_k$ be a sequence of $k$ graphs $H_1, H_2,\ldots,H_k$ with $| V(H_i)|$ $=n_i$ for $i=1,2,\ldots,k$ and let $\mathcal M$ $=$ $(M_{12}$, $M_{13},$ $\ldots,$  $M_{1k},$ $M_{23},$ $M_{24},$ $\ldots,$ $M_{2k},$ $\ldots,$ $ M_{(k-1)k})$, where $M_{ij}$ is a $0-1$ matrix of size $n_i\times n_j$. Then \textit{the $\mathcal M$-join of the graphs in $\mathcal H_k$}, denoted by $\bigvee_{\mathcal M}\mathcal H_k$, is the graph $\displaystyle \bigcup_{\substack{i,j=1,\\i<j}}^{k}\left(H_i\vee_{M_{ij}} H_j\right)$.
	\end{defn}
	
	The following results are used in the subsequent sections.
	\begin{thm}(\cite[pp. 483]{meyer2000matrix})\label{sylvester}
		Let $A$ and $B$ be two matrices of sizes $m\times n$ and $n\times m$ respectively. Then for any invertible $m \times m$ matrix $X$,
		$|X+AB|=|X|\times |I_{\mathit {n}}+BX^{-1}A|.$
	\end{thm}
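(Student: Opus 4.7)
The plan is to prove this Sylvester determinant identity via two different block‑triangular factorizations of a single augmented $(m+n)\times(m+n)$ matrix. I would introduce
\[
M=\begin{pmatrix} X & -A \\ B & I_n \end{pmatrix}
\]
and compute $\det(M)$ in two ways, noting that the two answers must agree.

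First, I would exploit the invertibility of $X$ to kill the lower‑left block. Multiplying $M$ on the left by the block‑unitriangular matrix $\left(\begin{smallmatrix} I_m & 0 \\ -BX^{-1} & I_n \end{smallmatrix}\right)$, which has determinant $1$, yields the block upper‑triangular matrix $\left(\begin{smallmatrix} X & -A \\ 0 & I_n+BX^{-1}A \end{smallmatrix}\right)$. Since the determinant of a block triangular matrix is the product of the determinants of its diagonal blocks, this gives $\det(M)=|X|\cdot|I_n+BX^{-1}A|$.

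Second, I would instead eliminate the upper‑right block. Multiplying $M$ on the left by $\left(\begin{smallmatrix} I_m & A \\ 0 & I_n \end{smallmatrix}\right)$ (again determinant $1$) produces the block lower‑triangular matrix $\left(\begin{smallmatrix} X+AB & 0 \\ B & I_n \end{smallmatrix}\right)$, whose determinant is $|X+AB|\cdot|I_n|=|X+AB|$.

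Equating the two expressions for $\det(M)$ gives the claimed identity $|X+AB|=|X|\cdot|I_n+BX^{-1}A|$. There is no real obstacle here: the only point requiring care is checking that the two elementary block‑row operations indeed multiply out as described (which is a direct verification), and noting that the second factorization does not require $X$ to be invertible while the first does — the assumed invertibility of $X$ in the hypothesis is exactly what makes both sides simultaneously meaningful. Since this is a standard textbook result cited from Meyer, in the paper itself one would simply invoke it rather than reproduce the argument.
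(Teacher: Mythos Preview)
Your proof is correct and is the standard Schur-complement argument for this Sylvester-type identity. The paper, however, does not supply any proof at all: Theorem~\ref{sylvester} is simply quoted from Meyer's textbook as a preliminary result and invoked later (in the proof of the theorem computing $P_{M_t}(x)$), exactly as you anticipated in your closing remark.
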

	\begin{thm}	(\cite[Corollary~2]{haynsworth1960reduction}) \label{t1.3}
		Let a real matrix $A$ be partitioned as \begin{center}
			$A=
			\begin{bmatrix}
				A_{11}& A_{{12}} &\cdots&A_{{1k}}\\
				A_{{21}}&A_{22}&\cdots &A_{{2k}}\\
				\vdots&\vdots&\ddots&\vdots\\
				A_{{k1}}&A_{{k2}}&\dots &A_{kk}
			\end{bmatrix}$.
		\end{center} 
		For $i,j=1,2,\ldots,k$, if $A_{ij}$ are symmetric matrices of order $n$ such that they commutes with each other. Then
		$\sigma(A)=\sum_{h=1}^{n}\sigma(E_h),$
		where the summation denotes the union of the multisets and $$E_h=\begin{bmatrix}
			a_{11}^{(h)} & a_{12}^{(h)} &\cdots & a_{1k}^{(h)} \\
			a_{21}^{(h)} & a_{22}^{(h)} & \cdots & a_{2k}^{(h)} \\\vdots & \vdots & \ddots & \vdots
			\\
			a_{k1}^{(h)} & a_{k2}^{(h)} & \cdots &  a_{kk}^{(h)}
		\end{bmatrix},$$
		with $a_{ij}^{(h)}$ is an eigenvalue of $A_{ij}$ corresponding to the same eigenvector $X$ for each $i, j=1,2,\ldots,k$; $h=1,2,\ldots,n$.
	\end{thm}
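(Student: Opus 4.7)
The plan is to exploit the simultaneous diagonalizability of the blocks $A_{ij}$. Since each $A_{ij}$ is a real symmetric matrix of order $n$ and the family $\{A_{ij}\}$ pairwise commutes, a standard linear algebra fact guarantees the existence of a single orthogonal matrix $P$ of order $n$ whose columns $X_1, X_2, \ldots, X_n$ are simultaneously eigenvectors of every $A_{ij}$. Writing $A_{ij} X_h = a_{ij}^{(h)} X_h$ supplies a coherent labelling in which $a_{ij}^{(h)}$ is precisely the scalar demanded in the statement; in the language of the preliminaries, $\{a_{ij}^{(h)} : 1\le i,j\le k\}$ is a set of co-eigenvalues of $\{A_{ij}\}$ with common eigenvector $X_h$.

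Next, I would lift this simultaneous diagonalization to the full block matrix $A$ by applying the orthogonal transformation $Q := I_k \otimes P$. A direct block computation shows that $Q^{T} A Q$ is the $k\times k$ block matrix whose $(i,j)$ block equals $P^{T} A_{ij} P$, which is the diagonal matrix with entries $a_{ij}^{(1)}, a_{ij}^{(2)}, \ldots, a_{ij}^{(n)}$. Hence $Q^{T} A Q$ is orthogonally similar to $A$ and every one of its $k^2$ blocks is simultaneously diagonal.

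The final step is a permutation-similarity argument. I would apply the permutation of the $kn$ coordinate indices that groups, for each fixed $h$, the $h$-th diagonal slot from each of the $k$ blocks. This conjugation carries $Q^{T} A Q$ to the block diagonal matrix $\bigoplus_{h=1}^{n} E_h$, where $E_h$ is exactly the $k\times k$ matrix displayed in the statement. Since the spectrum is invariant under orthogonal and permutation similarity, the multiset identity $\sigma(A) = \sum_{h=1}^{n} \sigma(E_h)$ follows at once.

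The main obstacle is essentially the book-keeping: one must fix a single orthonormal basis $X_1,\ldots,X_n$ once and for all, and then verify that the eigenvalues $a_{ij}^{(h)}$ attached to the common eigenvector $X_h$ assemble into $E_h$ with the correct ordering of rows and columns. The commutativity hypothesis is exactly what enables the common diagonalizing basis to exist; without it the matrices $P^{T} A_{ij} P$ cannot all be made diagonal at the same time, and the whole argument collapses. Once the common basis is locked in, the remainder is a routine Kronecker identity together with a natural permutation of coordinates.
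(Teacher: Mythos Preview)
Your argument is correct and is the standard proof of this classical fact. Note, however, that the paper does not actually supply its own proof of this statement: Theorem~\ref{t1.3} is quoted from Haynsworth~\cite[Corollary~2]{haynsworth1960reduction} in the preliminaries section and is used later as a tool, so there is no in-paper proof to compare against. Your simultaneous-diagonalization-plus-permutation approach is exactly the argument one would expect for this result, and the book-keeping you describe (fixing a single orthonormal basis of common eigenvectors, conjugating by $I_k\otimes P$, then permuting to the block-diagonal form $\bigoplus_{h=1}^{n}E_h$) is carried out correctly.
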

	\section{Indicating tensors}\label{Sec2}
	Let $\mathcal{R}(a_1,a_2,\dots,a_m)$ denote the range set of the sequence $(a_i)_{i=1}^m$. For $i=1,2,\dots,m$, let
	\begin{center}
		$\mathcal{R}^{a_i}(a_1,a_2,\dots,a_m)=\begin{cases}
			\mathcal{R}(a_1,a_2,\dots,a_m)\backslash \{a_i\} & \text{if}~ a_i\in \{a_1,a_2,\dots,a_m\};\\
			\mathcal{R}(a_1,a_2,\dots,a_m) & \text{otherwise}.
		\end{cases}$ 	
	\end{center} 
	For $n\in \mathbb{N}$, let $[n]:=\{1,2,\dots,n\}$. We denote $\mathcal{P}^*([n])\backslash \underset{y\in [n]}{\cup}\{y\}$ simply by $\widehat{[n]}$.
	\begin{defn}\label{it}
		\normalfont 	For $i=1,2,\dots,k$, let $A_i$ be mutually disjoint sets having $n_i$ elements. Let $\mathcal{A}$ be the sequence $(A_i)_{i=1}^{k}$.
		Then \textit{an indicating tensor corresponding to} $\mathcal{A}$, denoted by $T[\mathcal{A}]:=(T[\mathcal{A}]_{p_1p_2\dots p_N})$, is a $0-1$ tensor of order $N:=n_1+n_2+\dots+n_k$ and dimension $\displaystyle(~\underset{n_1~ times}{\underbrace{n_1+1,\dots,n_1+1}},\underset{n_2~ times}{\underbrace{n_2+1,\dots,n_2+1}},~\dots,~ \underset{n_k~ times}{\underbrace{n_k+1,\dots,n_k+1}}~)$, where $p_1,p_2,\dots, p_{n_1} \in A_1\cup\{\blacktriangledown\}$, $p_{n_1+n_2+\dots+n_i+1},\dots, p_{n_1+n_2+\dots+n_{i+1}}\in A_{i+1}\cup \{\blacktriangledown\}$ for $i=1,2,\ldots, k-1$; $\blacktriangledown$ is an arbitrary symbol that is not an element of any $A_i$, $i=1,2,\dots, k-1$; and is
		satisfying the following:
			\begin{itemize}
			\item[(i)] If there exists $p_1,p_2,\dots, p_N$ such that $\mathcal{R}^{\blacktriangledown}(p_1,p_2,\dots, p_N)\subseteq A_i$ for some $i\in [k]$, then $T[\mathcal{A}]_{p_1p_2\dots p_N} = 0$.
			\item[(ii)]  If there exists $p_1,p_2,\dots, p_N$ such that
			$T[\mathcal{A}]_{p_1p_2\dots p_N} = 1$, then
			$T[\mathcal{A}]_{p_1'p_2'\dots p_N'} = 1$ whenever $\mathcal{R}^{\blacktriangledown}(p_1',p_2',\dots, p_N')=\mathcal{R}^{\blacktriangledown}(p_1,p_2,\dots, p_N)$.
		\end{itemize}
	\end{defn}
	
	Notice that if $p_1=p_2=\dots=p_N=\blacktriangledown$, then we have $\mathcal{R}^{\blacktriangledown}(p_1,p_2,\dots, p_N)=\Phi\subseteq A_i$ and so $T[\mathcal{A}]_{p_1p_2\dots p_N} = 0$.
	\begin{example}\normalfont
		Let $A_1=\{1\}$, $A_2=\{2,3\}$ and $A_3=\{4,5,6\}$. Let $\mathcal{A}=(A_i)_{i=1}^{3}$.  Then an indicating tensor $T[\mathcal{A}]$  of order $6$ and dimension $(2,3,3,4,4,4)$ whose entries are given by,
		$$T[\mathcal{A}]_{i_1i_2\dots i_{6}}=
		\begin{cases}
			1 & \textnormal{if}~\mathcal{R}^{\blacktriangledown}(i_1,i_2,\dots,i_{6})=\{1,2,4,5,6\}~\textnormal{or}~\{1,3\};\\
			0 &
			\textnormal{otherwise}.
		\end{cases}$$
	More explicitly, the entries $T[\mathcal{A}]_{122456}$, $T[\mathcal{A}]_{122465}$, $T[\mathcal{A}]_{122546}$, $T[\mathcal{A}]_{122564}$, $T[\mathcal{A}]_{122645}$, $T[\mathcal{A}]_{122654}$,
	$T[\mathcal{A}]_{12\blacktriangledown456}$, $T[\mathcal{A}]_{12\blacktriangledown465}$, $T[\mathcal{A}]_{12\blacktriangledown546}$, $T[\mathcal{A}]_{12\blacktriangledown564}$, $T[\mathcal{A}]_{12\blacktriangledown645}$, $T[\mathcal{A}]_{12\blacktriangledown654}$,
	$T[\mathcal{A}]_{1\blacktriangledown2456}$, $T[\mathcal{A}]_{1\blacktriangledown2465}$, $T[\mathcal{A}]_{1\blacktriangledown2546}$, $T[\mathcal{A}]_{1\blacktriangledown2564}$, $T[\mathcal{A}]_{1\blacktriangledown2645}$, $T[\mathcal{A}]_{1\blacktriangledown2654}$,
	$T[\mathcal{A}]_{133\blacktriangledown\blacktriangledown\blacktriangledown}$, $T[\mathcal{A}]_{1\blacktriangledown3\blacktriangledown\blacktriangledown\blacktriangledown}$, $T[\mathcal{A}]_{13\blacktriangledown\blacktriangledown\blacktriangledown\blacktriangledown}$ take the value $1$ and the remaining entries are zero.
	\end{example}
	
	\begin{defn}
		\normalfont We  call an indicating tensor obtained by taking $A_i$ instead of  $A_i\cup\{\blacktriangledown\}$ for $i=1,2,\dots,k$ in Definition~\ref{it} as \textit{an indicating tensor of type-2 corresponding to $\mathcal{A}$} and is denoted by $T^*[\mathcal{A}]$.
	\end{defn}
	\begin{example}\normalfont
	Let $A_1=\{1\}$, $A_2=\{2,3\}$ and $A_3=\{4\}$. Let $\mathcal{A}=(A_i)_{i=1}^{3}$.  Then an indicating tensor $T^*[\mathcal{A}]$ of type-2 of order $4$ and dimension $(1,2,2,1)$ whose entries are given by,
	$$T^*[\mathcal{A}]_{i_1i_2i_3i_4}=
	\begin{cases}
		1 & \textnormal{if}~\mathcal{R}^{\blacktriangledown}(i_1,i_2,i_3,i_4)=\{1,2,4\}~\textnormal{or}~\{1,3,4\}~\textnormal{or}~\{1,2,3,4\};\\
		0 &
		\textnormal{otherwise}.
	\end{cases}$$
	More explicitly, $T^*[\mathcal{A}]_{1224}= T^*[\mathcal{A}]_{1334}=T^*[\mathcal{A}]_{1234}=T^*[\mathcal{A}]_{1324}=1$ and the remaining entries are zero.
\end{example}
	 For an indicating tensor $T[\mathcal{A}]$ and an indicating tensor $T^*[\mathcal{A}]$ of type-2, we define the following notations.
	\begin{enumerate}
		
		\item[(i)]	
		$E(T[\mathcal{A}]):=\{\mathcal{R}^{\blacktriangledown}(p_1,p_2,\dots, p_N)~|~T[\mathcal{A}]_{p_1p_2\dots p_N}=1\}$.

		\item[(ii)]	 $E(T^*[\mathcal{A}]):=\{\mathcal{R}(p_1,p_2,\dots, p_N)~|~T^*[\mathcal{A}]_{p_1p_2\dots p_N}=1\}$.
		
		\item[(iii)]  For each $p\in A_i,~ q\in A_j$ $(1\leq i\leq j\leq k)$, $c\in [N]$, 
		
		 $E_{p,q}^{c}(T[\mathcal{A}]):=\{S\in E(T[\mathcal{A}])~|~ \{p,q\}\subseteq S,~|S|=c\}$.
		
	\end{enumerate}
	In the following we introduce some special classes of indicating tensors. 
	\begin{enumerate}
		
		\item[(1)] For each $m\in\{1,2,\dots,N\}$, let $T[\mathcal{A};m]$ denote an indicating tensor corresponding to $\mathcal{A}$ in which $~T[\mathcal{A};m]_{p_1p_2\dots p_N}=0$ whenever $|\mathcal{R}^{\blacktriangledown}(p_1,p_2,\dots, p_N)|\neq m$.
		
		%
		%
		
		\item[(2)] For a non empty subset $B$ of $\{k,k+1,\dots,N\}$, let ${}_BT[\mathcal{A}]$ denote the indicating tensor corresponding to $\mathcal{A}$ in which
		\[
		{}_BT[\mathcal{A}]_{p_1p_2\dots p_N}=	
		\begin{cases}
			1 &\vspace{-0.28cm}\text{if}~ |\mathcal{R}^{\blacktriangledown}(p_1,p_2,\dots, p_N)|\in B~\text{and}~\\&
			\mathcal{R}^{\blacktriangledown}(p_1,p_2,\dots, p_N)\cap A_i\neq \Phi~$ for all $i\in[k];\\
			~0 &\text{otherwise}.
		\end{cases}\]	
		
		
		
		\item[(3)]
		Let $J[\mathcal{A}]$ denote the indicating tensor corresponding to $\mathcal{A}$ in which
		\begin{center}
			$J[\mathcal{A}]_{p_1p_2\dots p_N} $ =			
			$ \begin{cases}
				0 &\text{if}~     
				\mathcal{R}^{\blacktriangledown}(p_1,p_2,\dots, p_N)\subseteq A_i~\text{for some}~ i\in[k];  \\
				1 &\text{otherwise}.
			\end{cases} $	
		\end{center} 
		\item[(4)]	For $i=1,2,\dots,k$, let $A_i=\{u_{i_1},u_{i_2},\dots, u_{i_{n}}\}$. For each $r\in[n]$, let $_rT[\mathcal{A}]$ denote the indicating tensor corresponding to $\mathcal{A}$ with
		\[
		_rT[\mathcal{A}]_{p_1p_2\dots p_{nk}} =			
		\begin{cases}
			1 &\vspace{-0.25cm}\text{if}~ \mathcal{R}^{\blacktriangledown}(p_1,p_2,\dots, p_{nk})= \underset{i=1}{\overset{k}{\bigcup}}\{u_{i_{l_1}},u_{i_{l_2}},\dots,u_{i_{l_r}}\}\\
			&\text{for some}~\{l_1,l_2,\dots, l_r\}\subseteq[n];\\
			0 &\text{otherwise}.
		\end{cases}\]  
		
		\item[(5)]			
		Let $I[\mathcal{A}]:= ~_1T[\mathcal{A}]$ and we call this as the \textit{identity indicating tensor corresponding to~$\mathcal{A}$}.
		
		\item[(6)] Let $H(V(H),E(H))$ be a hypergraph with $V(H)=\{1, 2,\dots, n\}$. Let $1<k\leq \rho(H)$ and let $(G_i(U_i, E_i))_{i=1}^k$ be a sequence of hypergraphs with $U_i=\{u_{i1},u_{i2},\dots,u_{in}\}$. Let $\mathcal{A}=(U_i)_{i=1}^k$. 
		Let $N_H[\mathcal{A}]$ denote the indicating tensor corresponding to $\mathcal{A}$ with
		\[N_H[\mathcal{A}]_{p_1p_2\dots p_{nk}} =			
		\begin{cases}
			1 \vspace{-0.25cm}&\text{if}~ \mathcal{R}^{\blacktriangledown}(p_1,p_2,\dots, p_{nk})= \underset{i=1}{\overset{k}{\bigcup}}\{u_{i_{l_1}},u_{i_{l_2}},\dots,u_{i_{l_{s_i}}}\}\\
			\vspace{-0.25cm}&\text{where}~s_i\geq1,~D_i=\{l_1,l_2,\dots, l_{s_i}\}\subseteq V(H)~\text{such that}\\&\text{the set of all}~ D_i~\text{forms a partition of}~e~\text{for some}~e\in E(H).\\
			0 \quad &\text{otherwise}.
		\end{cases}\]
		
		\item[(7)] For $i=1,2,\dots,k$, let $|A_i|=n$. We denote the indicating tensor $J[\mathcal{A}]-{_rT[\mathcal{A}]}$ by $\Im_r[\mathcal{A}]$. When $r=1$, we denote it simply by $\Im[\mathcal{A}]$.

		\item[(8)]	We denote the indicating tensor $N_H[\mathcal{A}]+{_rT[\mathcal{A}]}$ by $_{H_r}N[\mathcal{A}]$.  When $r=1$, we denote it simply by $_{H}N[\mathcal{A}]$.

	\end{enumerate} 
	
	\section{Tensor join of hypergraphs}\label{defn sec}
	In the rest of the paper, whenever we consider a sequence of weighted/unweighted hypergraphs $(G_i)_{i=1}^{k}$, without loss of generality, we assume that the vertex sets of $G_i$s are mutually disjoint for $i=1,2,\dots,k.$
	
	\begin{defn}\label{defn1}
		\normalfont
		Let $\mathcal{G}=\left(G_i(V_i,E_i)\right)_{i=1}^k$ be a sequence of $k$ hypergraphs. Let $\mathcal{V}=(V_i)_{i=1}^k$. Consider an indicating tensor $T[\mathcal{V}]$. Then the \textit{$T[\mathcal{V}]$-join of hypergraphs in $\mathcal{G}$}, denoted by $\underset{T[\mathcal{V}]}{\bigvee}\mathcal{G}$, is the hypergraph constructed as follows:
		\begin{itemize}
			\item Take one copy of $G_i,~i=1,2,\dots, k$;
			\item For each $D\subseteq  \underset{i=1}{\overset{k}{\bigcup}}V_i$, join the vertices in $D$ as an edge in $\underset{T[\mathcal{V}]}{\bigvee}\mathcal{G}$ if and only if $D\in E(T[\mathcal{V}])$.
		\end{itemize}
		If $\mathcal{G}=(G_1,G_2)$, then we denote the $T[\mathcal{V}]$-join of hypergraphs in $\mathcal{G}$ by $G_1 \underset{T[\mathcal{V}]}{\bigvee}G_2$.
	\end{defn}
	\begin{example}
		\normalfont
		Consider the hypergraphs $G_1(V_1,E_1)$, $G_2(V_2,E_2)$ and $G_3(V_3,E_3)$ as shown in Figures~\ref{fig2}(a),~\ref{fig2}(b),~\ref{fig2}(c) respectively. Let $\mathcal{G}=(G_i)_{i=1}^3$ and $\mathcal{V}=(V_i)_{i=1}^3$. 
		\begin{figure}[ht]
			\begin{center}
				\includegraphics[scale=1.3]{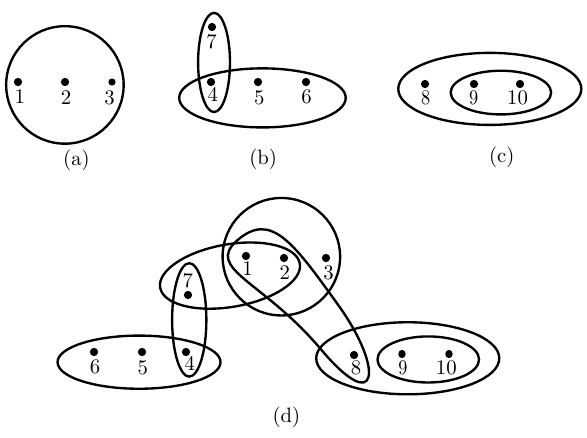}
			\end{center}\caption{The hypergraphs (a) $G_1(V_1,E_1)$, (b) $G_2(V_2,E_2)$, (c)  $G_3(V_3,E_3)$ and (d) $\underset{T[\mathcal{V}]}{\bigvee}\mathcal{G}$}\label{fig2}
		\end{figure}		
		Consider the indicating tensor $T[\mathcal{V}]$ of order $10$ and dimension $(4,4,4,5,5,5,5,4,4,4)$ with
		$$T[\mathcal{V}]_{i_1i_2\dots i_{10}}=
		\begin{cases}
			1 & \textnormal{if}~\mathcal{R}^{\blacktriangledown}(i_1,i_2,\dots,i_{10})=\{1,2,7\}~\textnormal{or}~\{1,2,8\};\\
			0 &
			\textnormal{otherwise}.
		\end{cases}$$
		Notice that, $E(T[\mathcal{V}])=\{\{1,2,7\}, \{1,2,8\}\}$. Then the hypergraph $\underset{T[\mathcal{V}]}{\bigvee}\mathcal{G}$ is as shown in Figure~\ref{fig2}(d).
	\end{example}
	\begin{defn}\label{defn2}
		\normalfont
		Let $\mathcal{G}=(G_i(V_i,E_i))_{i=1}^k$ be a sequence of $k$ hypergraphs. For each $S\in \widehat{[k]}$, let $\mathcal{V}_S=(V_i)_{i\in S}$. Let $\mathcal{T}^*=\{T^*[\mathcal{V}_S]~|~S\in \widehat{[k]}\}$ be a set of indicating tensors of type-2. Then the \textit{$\mathcal{T}^*$-join of hypergraphs in $\mathcal{G}$}, denoted by $\underset{\mathcal{T}^*}{\bigvee}\mathcal{G}$, is the hypergraph obtained by taking a copy of each $G_i$ and for each $D\subseteq \underset{i=1}{\overset{k}{\bigcup}}V_i$, join the set of vertices in $D$ by an edge in $\underset{\mathcal{T}^*}{\bigvee}\mathcal{G}$ if and only if $D\in E(T^*[\mathcal{V}_S])$ for some $S\in\widehat{[k]}$.
	\end{defn}
	\begin{defn}\label{H1}
		\normalfont
		Let $H$ be a hypergraph with $V(H)=[k]$. Let $\mathcal{G}=(G_i(V_i,E_i))_{i=1}^{k}$ be a sequence of hypergraphs with $|V_i|=n_i$ for $i=1,2,\dots,k.$ For each $e\in E(H)$, let $\mathcal{V}_e=(V_i)_{i\in e}$, $N_e:={\underset{i\in e}{\sum}n_i}$ and $\mathcal{G}_e=\{G_i~|~i\in e\}$. Let $\mathcal{T}=\{ T[\mathcal{V}_e]~|~e\in E(H)\}$, where for each $e\in E(H)$, $T[\mathcal{V}_e]$ is a non-zero indicating tensor with
		\begin{align*}\label{H0}
			T[\mathcal{V}_e]_{p_1p_2\dots p_{{N_e}}}=
			0 ~\text{if}~ ~\mathcal{R}^{\blacktriangledown}(p_1,p_2,\dots,p_{{N_e}})\cap V_i=\Phi~ \text{for some} ~i\in e.
		\end{align*}
		Then construct the hypergraph
		by taking a copy of each $G_i$ and doing the $T[\mathcal{V}_e]$-join of hypergraphs in $\mathcal{G}_e$ for each edge $e\in E(H)$. We denote this hypergraph by $\mathcal{G}(H,\mathcal{T})$ and call it as the \textit{$(H,\mathcal{T})$-join of hypergraphs in $\mathcal{G}$}.
	\end{defn} 
	Notice that $V(\mathcal{G}(H,\mathcal{T}))=\underset{i=1}{\overset{k}{\bigcup}}V_i$ and $E(\mathcal{G}(H,\mathcal{T}))=\underset{i=1}{\overset{k}{\bigcup}}E(G_i)\underset{e\in E(H)}{\bigcup}E(T[\mathcal{V}_e])$.
	\begin{thm}\label{r3.1}
		 Definitions~$\ref{defn1}$, $\ref{defn2}$ and $\ref{H1}$ are equivalent.
		
	\end{thm}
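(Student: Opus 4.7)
The plan is to show that each of the three constructions produces the same set of vertices, namely $V := \bigcup_{i=1}^{k} V_i$, and the same set of edges, so the three definitions describe the same hypergraph. In all three cases the vertex set is evidently $V$ and the edges already present inside each $G_i$ coincide, so the content of the theorem is the equality of the "join-edges" added across blocks. The common guiding idea is the \emph{support decomposition}: for every edge $D$ produced by any of the constructions, define $S(D) := \{i \in [k] : D \cap V_i \neq \emptyset\}$. By property~(i) of Definition~\ref{it}, every edge arising from an indicating tensor $T[\mathcal{V}]$ satisfies $|S(D)| \geq 2$, so the join-edges live in the disjoint strata $\mathcal{E}_S := \{D \subseteq V : S(D) = S\}$ indexed by $S \in \widehat{[k]}$.

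For Definition~\ref{defn1}~$\Longleftrightarrow$~Definition~\ref{defn2}, given an indicating tensor $T[\mathcal{V}]$, I partition $E(T[\mathcal{V}]) = \bigsqcup_{S \in \widehat{[k]}} E_S$ with $E_S := E(T[\mathcal{V}]) \cap \mathcal{E}_S$, and for each $S$ I define a type-2 indicating tensor $T^{*}[\mathcal{V}_S]$ whose nonzero entries $(p_i)_{i \in S}$ are precisely those with $\mathcal{R}(p_i : i \in S) \in E_S$. Then $\bigcup_{S} E(T^{*}[\mathcal{V}_S]) = E(T[\mathcal{V}])$, so $\underset{T[\mathcal{V}]}{\bigvee}\mathcal{G}$ and $\underset{\mathcal{T}^{*}}{\bigvee}\mathcal{G}$ have the same edge set. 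Conversely, starting from $\mathcal{T}^{*} = \{T^{*}[\mathcal{V}_S]\}_{S \in \widehat{[k]}}$, I build $T[\mathcal{V}]$ by declaring $T[\mathcal{V}]_{p_1 \cdots p_N} = 1$ exactly when $\mathcal{R}^{\blacktriangledown}(p_1, \ldots, p_N) \in E(T^{*}[\mathcal{V}_S])$ for some $S$. Property~(ii) of Definition~\ref{it} is inherited from the symmetry of each $T^{*}[\mathcal{V}_S]$, and property~(i) holds because any nonzero entry of $T^{*}[\mathcal{V}_S]$ has range meeting every $V_i$ with $i \in S$, preventing containment in a single $V_j$.

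For Definition~\ref{defn2}~$\Longleftrightarrow$~Definition~\ref{H1}, given $(H, \mathcal{T})$ I take $S \in \widehat{[k]}$ and set $T^{*}[\mathcal{V}_S]$ to encode the edge set of $T[\mathcal{V}_S]$ when $S \in E(H)$, and the zero tensor otherwise; the vanishing hypothesis in Definition~\ref{H1} on each $T[\mathcal{V}_e]$ forces every edge it produces to have support exactly $e$, so no duplication or confusion between strata arises, and $\bigcup_{e \in E(H)} E(T[\mathcal{V}_e]) = \bigcup_{S \in \widehat{[k]}} E(T^{*}[\mathcal{V}_S])$. Conversely, starting from $\mathcal{T}^{*}$, I take $H$ to be the hypergraph on $[k]$ whose edge set is $\{S \in \widehat{[k]} : T^{*}[\mathcal{V}_S] \neq 0\}$, and for each such $e$ I let $T[\mathcal{V}_e]$ be the indicating tensor with $E(T[\mathcal{V}_e]) = E(T^{*}[\mathcal{V}_e])$; this $T[\mathcal{V}_e]$ trivially satisfies the zero condition of Definition~\ref{H1} since every one of its edges has support $e$. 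The main bookkeeping obstacle throughout is keeping track of the three layers of notation (tuples $(p_1, \ldots, p_N)$, their ranges $\mathcal{R}^{\blacktriangledown}$, and their supports $S(D)$) and verifying that the constructed tensors satisfy conditions~(i) and~(ii) of Definition~\ref{it}; once the support stratification is in place, each direction reduces to a transparent matching of the $\widehat{[k]}$-indexed families.
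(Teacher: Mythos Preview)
Your proposal is correct and follows essentially the same approach as the paper's proof: you match each join-edge to the unique stratum indexed by its support $S(D)\in\widehat{[k]}$ and use this to translate between the three encodings. The paper organises the argument as a cycle $\ref{defn1}\Rightarrow\ref{defn2}\Rightarrow\ref{H1}\Rightarrow\ref{defn1}$ rather than the two biconditionals you give, and it writes the translations by explicit index-matching formulas (e.g.\ $T^{*}[\mathcal{V}_S]_{p_1\cdots p_{w(S)}}=T[\mathcal{V}]_{q_1\cdots q_N}$ with $\mathcal{R}^{\blacktriangledown}(q_1,\ldots,q_N)=\mathcal{R}(p_1,\ldots,p_{w(S)})$) rather than via the edge sets $E_S$, but the content is identical.
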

	\begin{proof}
		Let $\mathcal{G}=(G_i(V_i,E_i))_{i=1}^k$ be a sequence of $k$ hypergraphs with $|V_i|=n_i$ for $i=1,2,\dots,k$.
		\begin{itemize}
			\item[(1)] Consider an indicating tensor $T[\mathcal{V}]$, where $\mathcal{V}=(V_i)_{i=1}^k$ and assume that we have constructed the hypergraph $\underset{T[\mathcal{V}]}{\bigvee}\mathcal{G}$ as per Definition~\ref{defn1}. We show that this hypergraph can be viewed as the hypergraph $\underset{\mathcal{T}^*}{\bigvee}\mathcal{G}$ for some suitable $\mathcal{T}^*$ as per Definition~\ref{defn2}. For each $S\in\widehat{[k]}$, let $\mathcal{V}_S=(V_i)_{i\in S}$ and $w(S)=\underset{r\in S}{\sum}n_r$. Take $\mathcal{T}^*=\{T^*[\mathcal{V}_S]~|~S\in \widehat{[k]}\}$, where $T^*[\mathcal{V}_S]$ is the indicating tensor of type-2 with \[
			T^*[\mathcal{V}_S]_{p_1p_2\dots p_{w(S)}}=T[\mathcal{V}]_{q_1q_2\dots q_N},\] where $q_1, q_2,\dots,q_N$ are such that $\mathcal{R}^{\blacktriangledown}(q_1,q_2,\dots, q_N)=\mathcal{R}(p_1,p_2,\dots, p_{w(S)})$. Now construct the hypergraph $\underset{\mathcal{T}^*}{\bigvee}\mathcal{G}$ as per Definition~\ref{defn1}. Then this hypergraph is the same as the hypergraph $\underset{T[\mathcal{V}]}{\bigvee}\mathcal{G}$.
			\item[(2)]  Let $\mathcal{T}^*=\{T^*[\mathcal{V}_S]~|~S\in \widehat{[k]}\}$ be a set of indicating tensors of type-2, where $\mathcal{V}_S=(V_i)_{i\in S}$ for all $S\in \widehat{[k]}$. Assume that we have constructed the hypergraph $\underset{\mathcal{T}^*}{\bigvee}\mathcal{G}$ as per Definition~\ref{defn2}. We show that this hypergraph is the same as the hypergraph $\mathcal{G}(H,\mathcal{T})$ for some suitable hypergraph $H$ and a set of indicating tensors $\mathcal{T}$ as per Definition~\ref{H1}. First construct the hypergraph $H$ by using $\mathcal{T}^*$ as follows: Take $V(H)=[k]$. For each $T^*[\mathcal{V}_S]\in \mathcal{T}^*$, make $S\subseteq V(H)$ as an edge in $H$ if and only if $T^*[\mathcal{V}_S]$ is non-zero. Now, for each $e\in E(H)$, let $N_e=\underset{r\in e}{\sum}n_r$. 
			Take $\mathcal{T}=\{T[\mathcal{V}_e]~|~e\in E(H)\}$, where $T[\mathcal{V}_e]$ is the indicating tensor with
			\[T[\mathcal{V}_e]_{p_1p_2\dots p_{N_{e}}}=T^*[\mathcal{V}_e]_{q_1q_2\dots q_{N_e}},\] where $q_1,q_2,\dots, q_{N_e}$ are such that $\mathcal{R}(q_1,q_2,\dots, q_{N_e})=\mathcal{R}^{\blacktriangledown}(p_1,p_2,\dots, p_{N_{e}}).$
			Now, construct the hypergraph $\mathcal{G}(H, \mathcal{T})$ as per Definition~\ref{H1}. Then this hypergraph is the same as the hypergraph $\underset{\mathcal{T}^*}{\bigvee}\mathcal{G}$.
			
			\item[(3)] 	Let $H$ be a hypergraph with $V(H)=[k]$.  For each $e\in E(H)$, let $\mathcal{V}_e=(V_i)_{i\in e}$. Let $\mathcal{T}=\{T[\mathcal{V}_e]~|~e\in E(H)\}$. Assume that we have constructed $\mathcal{G}(H,\mathcal{T})$ as per Definition~\ref{H1}. We show that this hypergraph can be viewed as $\underset{T[\mathcal{V}]}{\bigvee}\mathcal{G}$ for some suitable indicating tensor $T[\mathcal{V}]$, where $\mathcal{V}=(V_i)_{i=1}^k$. Take the indicating tensor $T[\mathcal{V}]$ with
			\[				T[\mathcal{V}]_{p_1p_2\dots p_{N}} =\begin{cases}
				T[\mathcal{V}_e]_{q_1q_2\dots q_{N_e}}&\vspace{-0.3cm} \text{if there exists}~ e\in E(H)~ \text{such that}\\&\vspace{-0.3cm}\mathcal{R}^{\blacktriangledown}(p_1,p_2,\dots, p_N)\cap V_i\neq\Phi~\text{for all}~ i\in e~\\
				&\text{with}~\mathcal{R}^{\blacktriangledown}(p_1,p_2,\dots, p_N)=~\mathcal{R}^{\blacktriangledown}(q_1,q_2,\dots, q_{N_e}); \\
				0&\text{otherwise}.
			\end{cases}\]
			Construct the hypergraph $\underset{T[\mathcal{V}]}{\bigvee}\mathcal{G}$ as per Definition~\ref{defn1}, which becomes the same as the hypergraph $\mathcal{G}(H,\mathcal{T})$.
		\end{itemize}
	\end{proof}
In view of Theorem~\ref{r3.1}, hereafter we say `the tensor join of hypergraphs' to mean the hypergraph obtained by any one of the operations defined in Definitions~$\ref{defn1}$,~$\ref{defn2}$ and $\ref{H1}$, unless we specifically mentioned otherwise.
	\begin{note}
		\normalfont
		Any hypergraph can be viewed as a  tensor join of some hypergraphs.
		For, let $H$ be a hypergraph with $|V(H)|=n$. Take a partition $V_i,~i=1,2,\dots,k$ of  $V(H)$, where $k~\leq n$. For each $i=1,2,\dots,k$, let $G_i$ be the subhypergraph of $H$ induced by the vertex subset $V_i$. Let $\mathcal{G}=(G_i)_{i=1}^k$ and $\mathcal{V}=(V_i)_{i=1}^k$. Now consider the indicating tensor $T[\mathcal{V}]$ with
		\begin{center}
			$T[\mathcal{V}]_{p_1 p_2\dots p_{n}}$ =
			$\begin{cases}
				1 & \text{if}~ \mathcal{R}^{\blacktriangledown}(p_1,p_2,\dots, p_{n})\in E(H);\\
				0 & \text{otherwise}.
			\end{cases}$	
		\end{center}
		Then it is clear that $H$ is the same as the hypergraph 	$\underset{T[\mathcal{V}]}{\bigvee}\mathcal{G}$.
	\end{note}
	In the following theorem, we assert that for a given sequence $\mathcal{M}$ of matrices, the $\mathcal M$-join of graphs in a sequence $\mathcal{G}$ defined in Definition~\ref{M join defn} can be viewed as a $T[\mathcal{A}]$-join of graphs in $\mathcal{G}$ for some suitable $T[\mathcal{A}]$ and vice versa.
	\begin{thm}\label{rem M to T}\normalfont
		Let $\mathcal{G}=(G_i)_{i=1}^k$ be a sequence of graphs with $V(G_i)=\{u_{i1},u_{i2},\dots,u_{in_i}\}$ for $i=1,2,\dots,k$ and let $\mathcal{V}=(V_i)_{i=1}^k$. Then corresponding to a given sequence $\mathcal{M}=(M_{12}, M_{13}, \dots, M_{1k},$ $M_{23}, M_{24}, \dots, M_{2k}, \dots, M_{(k-1)k})$, where $M_{ij}$ is a $0-1$ matrix of size $n_i \times n_j$, there exist an indicating tensor $T[\mathcal{V}]$ such that the graph $\bigvee_{\mathcal{M}}\mathcal{G}$  is the same as the graph $\underset{T[\mathcal{V}]}{\bigvee}\mathcal{G}$ and vice versa.
	\end{thm}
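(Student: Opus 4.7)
The plan is to exhibit, in both directions, an explicit bijective correspondence between sequences of $0$-$1$ matrices $\mathcal{M}=(M_{ij})_{1\le i<j\le k}$ with $M_{ij}$ of size $n_i\times n_j$, and indicating tensors $T[\mathcal{V}]$ whose support $E(T[\mathcal{V}])$ consists of 2-element subsets, each of which intersects two distinct sets $V_i$, $V_j$. Once such a correspondence is set up, verifying that the two operations produce identical edge sets on top of the disjoint copies of $G_1,\dots,G_k$ is routine.

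For the forward direction, given $\mathcal{M}$, I would define $T[\mathcal{V}]$ by declaring
\[
T[\mathcal{V}]_{p_1p_2\dots p_N}=1
\]
if and only if there exist $1\le i<j\le k$, $p\in[n_i]$ and $q\in[n_j]$ such that $\mathcal{R}^{\blacktriangledown}(p_1,p_2,\dots,p_N)=\{u_{ip},u_{jq}\}$ and $(M_{ij})_{pq}=1$, and setting all other entries to $0$. Condition (i) of Definition~\ref{it} holds because any nonzero entry has $\mathcal{R}^{\blacktriangledown}$ meeting two different parts, and condition (ii) holds because the defining property depends only on $\mathcal{R}^{\blacktriangledown}(p_1,\dots,p_N)$. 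Thus $E(T[\mathcal{V}])=\{\{u_{ip},u_{jq}\}:i<j,\,(M_{ij})_{pq}=1\}$, which is precisely the set of edges added by the $\mathcal{M}$-join on top of the disjoint union of the $G_i$'s, so $\bigvee_{T[\mathcal{V}]}\mathcal{G}$ coincides with $\bigvee_{\mathcal{M}}\mathcal{G}$ by Definition~\ref{defn1}.

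For the converse, since $\bigvee_{\mathcal{M}}\mathcal{G}$ is a graph, the relevant $T[\mathcal{V}]$ must satisfy $|S|=2$ and $S\not\subseteq V_i$ for every $S\in E(T[\mathcal{V}])$ (otherwise the tensor join would contain a loop, a hyperedge of size $>2$, or an edge internal to some $G_i$ that is not already present). Under this hypothesis, for each $1\le i<j\le k$ I would define $M_{ij}$ as the $n_i\times n_j$ matrix whose $(p,q)$-entry is $1$ if and only if $\{u_{ip},u_{jq}\}\in E(T[\mathcal{V}])$ and $0$ otherwise. The sequence $\mathcal{M}=(M_{12},M_{13},\dots,M_{(k-1)k})$ then reproduces exactly the same cross-part edges, so $\bigvee_{\mathcal{M}}\mathcal{G}=\bigvee_{T[\mathcal{V}]}\mathcal{G}$.

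The only real obstacle is ensuring that the two recipes are well posed: in the forward direction I must check that Definition~\ref{it}(i)–(ii) are actually satisfied by the tensor I write down (hence the explicit appeal to invariance of $\mathcal{R}^{\blacktriangledown}$ and to the fact that pairs $\{u_{ip},u_{jq}\}$ with $i\ne j$ are never contained in a single $V_\ell$), and in the backward direction I must recognise that the hypothesis "$\bigvee_{T[\mathcal{V}]}\mathcal{G}$ is a graph of the $\mathcal{M}$-join type" forces $E(T[\mathcal{V}])$ to consist solely of 2-element cross-part subsets, which is exactly what is needed for the matrices $M_{ij}$ to be defined. Everything else is bookkeeping, and the equivalence is immediate from comparing edge sets.
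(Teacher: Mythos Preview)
Your proposal is correct and follows essentially the same route as the paper: in both directions you set up the obvious dictionary between entries $(M_{ij})_{pq}$ and $2$-element cross-part sets $\{u_{ip},u_{jq}\}\in E(T[\mathcal{V}])$, which is exactly what the paper does. The only cosmetic difference is that in the converse direction the paper reads off $(M_{ij})_{rt}$ from a single specific tensor entry (the one with all $V_i$-coordinates equal to $u_{ir}$, all $V_j$-coordinates equal to $u_{jt}$, and $\blacktriangledown$ elsewhere), whereas you phrase it via membership in $E(T[\mathcal{V}])$; by condition~(ii) of Definition~\ref{it} these are equivalent. Your explicit remark that the converse only makes sense when $E(T[\mathcal{V}])$ consists of $2$-element cross-part sets is something the paper relegates to a comment after the proof (identifying the relevant tensor with $T[\mathcal{V};2]$), so you are if anything a bit more careful here; note however that the case $S\subseteq V_i$ is already excluded by condition~(i), so that part of your parenthetical is redundant.
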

	\begin{proof} 
		Assume that the graph $\bigvee_{\mathcal{M}}\mathcal{G}$ is constructed as per Definition~\ref{mjoin}. Let us denote the $(r,t)$-th entry of $M_{ij}$ by $(M_{ij})_{rt}$. Now consider the indicating tensor $T[\mathcal{V}]$ with \begin{center}
			$T[\mathcal{V}]_{p_1p_2\dots p_N}$ =			
			$ \begin{cases}
				(M_{ij})_{rt} &\text{if}~    
				\mathcal{R}^{\blacktriangledown}(p_1,p_2,\dots, p_N)=\{u_{ir}, u_{jt}\}; \\
				0 &\text{otherwise}.
			\end{cases} $	
		\end{center} Then the graph $\underset{T[\mathcal{V}]}{\bigvee}\mathcal{G}$ constructed as per Definition~\ref{defn1} is the same as the graph $\bigvee_{\mathcal{M}}\mathcal{G}$.
		
		Conversely, assume that an indicating tensor $T[\mathcal{V}]$  corresponding to $\mathcal{V}$ is given and the hypergraph $\underset{T[\mathcal{V}]}{\bigvee}\mathcal{G}$ is constructed as per Definition~\ref{defn1}. For, $1\leq i\leq j\leq k$, consider the matrix $M_{ij}$ whose $(r,t)$-th entry is defined as $(M_{ij})_{rt}=T[\mathcal{V}]_{p_1 p_2\dots p_{N}}$, where  $p_{n_1+n_2+\dots+n_{i-1}+1}=\dots= p_{n_1+n_2+\dots+n_{i}}=u_{ir}$, $p_{n_1+n_2+\dots+n_{j-1}+1}=\dots= p_{n_1+n_2+\dots+n_{j}}=u_{jt}$ and all other indices are zero. Then the graph $\bigvee_{\mathcal{M}}\mathcal{G}$ constructed as per Definition~\ref{mjoin} is the same as the graph $\underset{T[\mathcal{V}]}{\bigvee}\mathcal{G}$.		
	\end{proof}	
	Naturally, there are several ways of constructing the matrix $M_{ij}$ from the given indicating tensor $T[\mathcal{V}]$. In Theorem~\ref{rem M to T}, we exhibit a way of constructing such matrices. Also, notice that the indicating tensor $T[\mathcal{V}]$ referred in Theorem~\ref{rem M to T} is especially the indicating tensor $T[\mathcal{V};2]$.
	\subsection{Some classes of hypergraphs as $T[\mathcal{A}]$-join of hypergraphs}
	
	
	In Table~\ref{tab3}, we list some existing and new classes of hypergraphs which can be expressed as a ${}_BT[\mathcal{A}]$-join of hypergrphs in $\mathcal{G}=(H_i)_{i=1}^k$, by suitably taking the hypergraphs $H_i$s, the set $B$ and the value $k$ as shown in the same table correspond to each class of hypergaphs, where $\mathcal{A}=(V(H_i))_{i=1}^k$.
	
	\setlength\extrarowheight{1pt}
	\renewcommand{\arraystretch}{1.0}
	\tabcolsep=0.11cm
	\begin{longtable}{ | M{1.2cm}| M{6.5cm}|M{1cm}|M{1.5 cm}|M{3.5cm}|}
		\hline
		\textbf{S. No.}  &  \textbf{Name of the hypergraph}  &
		$H_i$& $k$ & $B$ \\
		\hline
		

		1. & Complete $m$-uniform $m$-partite hypergraph \cite{voloshin2009introduction} & $K_{n_i}^c$ & $m$ & $\{m\}$   \\
		\hline

		2. &Complete $m$-uniform weak $k$-partite hypergraph, $k\leq m$ \cite{sarkar2020joins}& $K_{n_i}^c$ & $k$ & $\{m\}$  \\
		\hline
		
		
		3. & Complete weak $k$-partite hypergraph &$K_{n_i}^c$ & $k$ & $\{k,k+1,\dots,N\}$ \\
		\hline
		
		
		4. &  Join of a set $\mathcal{G}$ of non-uniform hypergraphs \cite{sarkar2020joins} &$H_i$ & $k$ & a subset of $\{k,k+1,\dots,N\}$ \\
		\hline 
		
		5. &  Join of a set $\mathcal{G}$ of $m$-uniform hypergraphs \cite{sarkar2020joins} &$H_i$ & $k(\leq m)$& $\{m\}$  \\
		\hline 
		
		\caption{Viewing some existing and new class of hypergraphs as a $T[\mathcal{A}]$-join of hypergraphs in $\mathcal{G}$}\label{tab3}
	\end{longtable}
	Notice that if $m\geq 2$, the complete $m$-uniform weak $2$-partite hypergraph becomes the complete $m$-uniform bipartite hypergraph. Also the complete weak $2$-partite hypergraph becomes the complete bipartite hypergraph.
	
	
	
	\subsection{Some unary hypergraph operations as $T[\mathcal{A}]$-join of hypergraphs}
	First we define a new type of complement of a hypergraph.
	\begin{defn}
		\normalfont 
		Let $H(V,E)$ be a hypergraph. We define the \textit{total complement of $H$}, denoted by $\overline{H}(V, \overline{E})$, as the hypergraph with vertex set $V$ and the edge set $\overline{E}=\mathcal{P}^*(V)\backslash (E\cup S)$, where $S$ is the set of all singletons of $V$. \end{defn}
	In Table~\ref{utab}, we define several new unary hypergraph operations and name them analogous to the unary operations on graphs defined in Section~4.1 of~\cite{murugesan2021spectra}. For the operations given in S.Nos. $37$-$126$ of this table, we assume that $H$ contains no loops.
	\setlength\extrarowheight{1pt}
	\renewcommand{\arraystretch}{1.0}
	\tabcolsep=0.11cm
	\begin{longtable}{ | M{1.2cm} | M{2.5cm} |m{11.5cm}| }
		\hline
		\textbf{S. No.}  & \textbf{Description} &
		~~~~~~~~~~~~~~~~~~~~~~~~~~~~\textbf{Name of the hypergraph}\\ 
		\hline
		
		1. & $H \underset{_rT[\mathcal{V}]}{\bigvee}H$ & ~$r$-Mirror hypergraph of $H$  \\
		\hline
		
		2. & $H \underset{_rT[\mathcal{V}]}{\bigvee}H^c$ & ~$r$-Mirror complemented neighbourhood hypergraph of $H$  \\
		\hline
		
		3. & $H \underset{_rT[\mathcal{V}]}{\bigvee}K_n$ &  ~$C$-$r$-complete hypergraph of $H$  \\
		\hline	
		
		4. & $H \underset{_rT[\mathcal{V}]}{\bigvee}K_n^{c}$ & ~$C$-$r$-hypergraph of $H$ \\
		\hline
		
		5. & $H \underset{_rT[\mathcal{V}]}{\bigvee}\overline{H}$ &  ~$r$-Mirror total complemented neighbourhood hypergraph of $H$  \\
		\hline
		
		
		%
		%
		%
		
		6. & $H \underset{J[\mathcal{V}]}{\bigvee}H$ &  ~Join neighbourhood hypergraph of $H$ \\
		\hline
		
		7. & $H \underset{J[\mathcal{V}]}{\bigvee}H^c$ &  ~Join complemented neighbourhood hypergraph of $H$   \\
		\hline
		
		8. & $H \underset{J[\mathcal{V}]}{\bigvee}K_n$ & ~Join complete hypergraph of $H$ \\
		\hline
		
		9. & $H \underset{J[\mathcal{V}]}{\bigvee}K_n^{c}$ & ~Join hypergraph of $H$ \\
		\hline
		
		10. & $H \underset{J[\mathcal{V}]}{\bigvee}\overline{H}$ &  ~Join total complemented neighbourhood hypergraph of $H$\\
		\hline
		
		11. & $H \underset{\Im_r[\mathcal{V}]}{\bigvee}H$ & ~$VC$-$r$-neighbourhood hypergraph of $H$  \\
		\hline

		12. & $H \underset{\Im_r[\mathcal{V}]}{\bigvee}H^c$ & ~$VC$-$r$-complemented neighbourhood hypergraph of $H$ \\
		\hline
		
		13. & $H \underset{\Im_r[\mathcal{V}]}{\bigvee}K_n$ & ~$VC$-$r$-complete hypergraph of $H$  \\
		\hline
		
		14. & $H \underset{\Im_r[\mathcal{V}]}{\bigvee}K_n^{c}$ & ~$VC$-$r$-hypergraph of $H$  \\
		\hline
		
		15. & $H \underset{\Im_r[\mathcal{V}]}{\bigvee}\overline{H}$ &  ~$VC$-$r$-total complemented neighbourhood hypergraph of $H$   \\
		\hline
		
		%
		%
		%
		%
		
		16. & $H^c \underset{_rT[\mathcal{V}]}{\bigvee}H^c$ & ~$r$-Mirror-complement hypergraph of $H$  \\
		\hline
		
		17. & $H^c \underset{_rT[\mathcal{V}]}{\bigvee}K_n$ &  ~$C$-$r$-complete complement hypergraph of $H$ \\
		\hline	
		
		18. & $H^c \underset{_rT[\mathcal{V}]}{\bigvee}K_n^{c}$ &  ~$C$-$r$-complement hypergraph of $H$   \\
		\hline
		
		%
		%
		
		19. & $H^c \underset{J[\mathcal{V}]}{\bigvee}H^c$ & ~Join neighbourhood-complement hypergrph of $H$  \\
		\hline
		
		20. & $H^c \underset{J[\mathcal{V}]}{\bigvee}K_n$ &  ~Join complete-complement hypergrph of $H$ \\
		\hline
		
		21. & $H^c \underset{J[\mathcal{V}]}{\bigvee}K_n^{c}$ & ~Join-complement hypergrph of $H$  \\
		\hline
		
		22. & $H^c \underset{\Im_r[\mathcal{V}]}{\bigvee}H^c$ & ~$VC$-$r$-neighbourhood-complement hypergraph of $H$   \\
		\hline
		
		23. & $H^c \underset{\Im_r[\mathcal{V}]}{\bigvee}K_n$ & ~$VC$-$r$-complete-complement hypergraph of $H$  \\
		\hline
		
		24. & $H^c \underset{\Im_r[\mathcal{V}]}{\bigvee}K_n^{c}$ & ~$VC$-$r$-complement hypergraph of $H$   \\
		\hline
		
		%
		%
		
		25. & $\overline{H} \underset{_rT[\mathcal{V}]}{\bigvee}H^c$ & ~Total $r$-mirror complement hypergraph of $H$ \\
		\hline
		
		26. & $\overline{H} \underset{_rT[\mathcal{V}]}{\bigvee}K_n$ & ~$C$-$r$-complete total complement hypergraph of $H$ \\
		\hline	
		
		27. & $\overline{H} \underset{_rT[\mathcal{V}]}{\bigvee}K_n^{c}$ & ~$C$-$r$-total complement hypergraph of $H$ \\
		\hline
		
		28. & $\overline{H} \underset{_rT[\mathcal{V}]}{\bigvee}\overline{H}$ & ~$r$-Mirror total complemented hypergraph of $H$  \\
		\hline
		
		%
		%
		%
		
		29. & $\overline{H} \underset{J[\mathcal{V}]}{\bigvee}H^c$ & ~Total join neighbourhood complement hypergraph of $H$ \\
		\hline
		
		30. & $\overline{H} \underset{J[\mathcal{V}]}{\bigvee}K_n$ & ~Join complete total complement hypergraph of $H$ \\
		\hline
		
		31. & $\overline{H} \underset{J[\mathcal{V}]}{\bigvee}K_n^{c}$ & ~Join total complement hypergrph of $H$\\
		\hline
		
		32. & $\overline{H} \underset{J[\mathcal{V}]}{\bigvee}\overline{H}$ & ~Join neighbourhood-total complement hypergraph of $H$ \\
		\hline
		
		33. & $\overline{H} \underset{\Im_r[\mathcal{V}]}{\bigvee}H^c$ &  ~Total $VC$-$r$-neighbourhood complement hypergraph of $H$ \\
		\hline
		
		34. & $\overline{H} \underset{\Im_r[\mathcal{V}]}{\bigvee}K_n$ &~$VC$-$r$-complete total complement hypergraph of $H$ \\
		\hline
		
		35. & $\overline{H} \underset{\Im_r[\mathcal{V}]}{\bigvee}K_n^{c}$ & ~$VC$-$r$-total complement hypergraph of $H$  \\
		\hline
		
		36. & $\overline{H} \underset{\Im_r[\mathcal{V}]}{\bigvee}\overline{H}$ & ~$VC$-$r$-neighbourhood total complement hypergraph of $H$  \\
		\hline
		
		37. & $H \underset{N_H[\mathcal{V}]}{\bigvee}H$ &  ~$N$-neighbourhood hypergraph of $H$  \\
		\hline
		
		38. & $H \underset{N_H[\mathcal{V}]}{\bigvee}H^c$ & ~$N$-complemented neighbourhood hypergraph of $H$ \\
		\hline
		
		39. & $H \underset{N_H[\mathcal{V}]}{\bigvee}K_n$ &  ~$N$-complete hypergraph of $H$ \\
		\hline	
		
		40. & $H \underset{N_H[\mathcal{V}]}{\bigvee}K_n^{c}$ & ~$N$-hypergraph of $H$ \\
		\hline
		
		41. & $H \underset{N_H[\mathcal{V}]}{\bigvee}\overline{H}$ &  ~$N$-total complemented neighbourhood hypergraph of $H$ \\
		\hline
		
		42. & $H \underset{	_{H_r}N[\mathcal{V}]}{\bigvee}H$ &  ~$\overline{N}$-$r$-neighbourhood hypergraph of $H$  \\
		\hline
		
		43. & $H \underset{_{H_r}N[\mathcal{V}]}{\bigvee}H^c$ & ~$\overline{N}$-$r$-complemented neighbourhood hypergraph of $H$ \\
		\hline
		
		44. & $H \underset{_{H_r}N[\mathcal{V}]}{\bigvee}K_n$ &  ~$\overline{N}$-$r$-complete hypergraph of $H$ \\
		\hline	
		
		45. & $H \underset{_{H_r}N[\mathcal{V}]}{\bigvee}K_n^{c}$ & ~$\overline{N}$-$r$-hypergraph of $H$ \\
		\hline
		
		46. & $H \underset{_{H_r}N[\mathcal{V}]}{\bigvee}\overline{H}$ &  ~$\overline{N}$-$r$-total complemented neighbourhood hypergraph of $H$ \\
		\hline
		
		47. & $H \underset{N_{H^c}[\mathcal{V}]}{\bigvee}H$ &  ~$NC$-neighbourhood hypergraph of $H$  \\
		\hline
		
		48. & $H \underset{N_{H^c}[\mathcal{V}]}{\bigvee}H^c$ & ~$NC$-complemented neighbourhood hypergraph of $H$ \\
		\hline
		
		49. & $H \underset{N_{H^c}[\mathcal{V}]}{\bigvee}K_n$ &  ~$NC$-complete hypergraph of $H$ \\
		\hline	
		
		50. & $H \underset{N_{H^c}[\mathcal{V}]}{\bigvee}K_n^{c}$ & ~$NC$-hypergraph of $H$ \\
		\hline
		
		51. & $H \underset{N_{H^c}[\mathcal{V}]}{\bigvee}\overline{H}$ &  ~$NC$-total complemented neighbourhood hypergraph of $H$ \\
		\hline
		
		52. & $H \underset{N_{\overline{H}}[\mathcal{V}]}{\bigvee}H$ &  ~$NTC$-neighbourhood hypergraph of $H$  \\
		\hline
		
		53. & $H \underset{N_{\overline{H}}[\mathcal{V}]}{\bigvee}H^c$ & ~$NTC$-complemented neighbourhood hypergraph of $H$ \\
		\hline
		
		54. & $H \underset{N_{\overline{H}}[\mathcal{V}]}{\bigvee}K_n$ &  ~$NTC$-complete hypergraph of $H$ \\
		\hline	
		
		55. & $H \underset{N_{\overline{H}}[\mathcal{V}]}{\bigvee}K_n^{c}$ & ~$NTC$-hypergraph of $H$ \\
		\hline
		
		56. & $H \underset{N_{\overline{H}}[\mathcal{V}]}{\bigvee}\overline{H}$ &  ~$NTC$-total complemented neighbourhood hypergraph of $H$  \\
		\hline

		57. & $H \underset{	_{H^{c}_r}N[\mathcal{V}]}{\bigvee}H$ &  ~$\overline{N}C$-$r$-neighbourhood hypergraph of $H$  \\
		\hline
		
		58. & $H \underset{_{H^{c}_r}N[\mathcal{V}]}{\bigvee}H^c$ & ~$\overline{N}C$-$r$-complemented neighbourhood hypergraph of $H$\\
		\hline
		
		59. & $H \underset{_{H^{c}_r}N[\mathcal{V}]}{\bigvee}K_n$ &  ~$\overline{N}C$-$r$-complete hypergraph of $H$ \\
		\hline	
		
		60. & $H \underset{_{H^{c}_r}N[\mathcal{V}]}{\bigvee}K_n^{c}$ & ~$\overline{N}C$-$r$-hypergraph of $H$ \\
		\hline
		
		61. & $H \underset{_{H^{c}_r}N[\mathcal{V}]}{\bigvee}\overline{H}$ &  ~$\overline{N}C$-$r$-total complemented neighbourhood hypergraph of $H$ \\
		\hline

		62. & $H \underset{	_{\overline{H}_r}N[\mathcal{V}]}{\bigvee}H$ &  ~$\overline{N}TC$-$r$-neighbourhood hypergraph of $H$ \\
		\hline
		
		63. & $H \underset{_{\overline{H}_r}N[\mathcal{V}]}{\bigvee}H^c$ & ~$\overline{N}TC$-$r$-complemented neighbourhood hypergraph of $H$ \\
		\hline
		
		64. & $H \underset{_{\overline{H}_r}N[\mathcal{V}]}{\bigvee}K_n$ &  ~$\overline{N}TC$-$r$-complete hypergraph of $H$ \\
		\hline	
		
		65. & $H \underset{_{\overline{H}_r}N[\mathcal{V}]}{\bigvee}K_n^{c}$ & ~$\overline{N}TC$-$r$-hypergraph of $H$ \\
		\hline
		
		66. & $H \underset{_{\overline{H}_r}N[\mathcal{V}]}{\bigvee}\overline{H}$ &  ~$\overline{N}TC$-$r$-total complemented neighbourhood hypergraph of $H$ \\
		\hline
		
		67. & $H^c \underset{N_H[\mathcal{V}]}{\bigvee}H^c$ & ~$N$-neighbourhood complement hypergraph of $H$  \\
		\hline
		
		68. & $H^c \underset{N_H[\mathcal{V}]}{\bigvee}K_n$ &  ~$N$-complete complement hypergraph of $H$ \\
		\hline	
		
		69. & $H^c \underset{N_H[\mathcal{V}]}{\bigvee}K_n^{c}$ &  ~$N$-complement hypergraph of $H$  \\
		\hline
		
		70. & $\overline{H} \underset{N_H[\mathcal{V}]}{\bigvee}H^c$ & ~Total $N$-neighbourhood complement hypergraph of $H$ \\
		\hline
		
		71. & $\overline{H} \underset{N_H[\mathcal{V}]}{\bigvee}K_n$ & ~$N$-complete total complement hypergraph of $H$ \\
		\hline	
		
		72. & $\overline{H} \underset{N_H[\mathcal{V}]}{\bigvee}K_n^{c}$ & ~$N$-total complement hypergraph of $H$ \\
		\hline
		
		73. & $\overline{H} \underset{N_H[\mathcal{V}]}{\bigvee}\overline{H}$ & ~$N$-neighbourhood total complement hypergraph of $H$  \\
		\hline
		
		74. & $H^c \underset{N_{\overline{H}}[\mathcal{V}]}{\bigvee}H^c$ & ~$NTC$-neighbourhood-complement hypergraph of $H$ \\
		\hline
		
		75. & $H^c \underset{N_{\overline{H}}[\mathcal{V}]}{\bigvee}K_n$ &  ~$NTC$-complete-complement hypergraph of $H$ \\
		\hline	
		
		76. & $H^c \underset{N_{\overline{H}}[\mathcal{V}]}{\bigvee}K_n^{c}$ &  ~$NTC$-complement hypergraph of $H$ \\
		\hline
		
		77. & $\overline{H} \underset{N_{\overline{H}}[\mathcal{V}]}{\bigvee}H^c$ & ~Total $NTC$-neighbourhood-complement hypergraph of $H$ \\
		\hline
		
		78. & $\overline{H} \underset{N_{\overline{H}}[\mathcal{V}]}{\bigvee}K_n$ & ~$NTC$-complete total complement hypergraph of $H$  \\
		\hline	
		
		79. & $\overline{H} \underset{N_{\overline{H}}[\mathcal{V}]}{\bigvee}K_n^{c}$ & ~$NTC$-total complement hypergraph of $H$ \\
		\hline
		
		80. & $\overline{H} \underset{N_{\overline{H}}[\mathcal{V}]}{\bigvee}\overline{H}$ & ~$NTC$-neighbourhood total complement hypergraph of $H$   \\
		\hline
		
		81. & $H^c \underset{N_{H^c}[\mathcal{V}]}{\bigvee}H^c$ & ~$NC$-neighbourhood-complement hypergraph of $H$   \\
		\hline
		
		82. & $H^c \underset{N_{H^c}[\mathcal{V}]}{\bigvee}K_n$ &  ~$NC$-complete-complement hypergraph of $H$  \\
		\hline	
		
		83. & $H^c \underset{N_{H^c}[\mathcal{V}]}{\bigvee}K_n^{c}$ &  ~$NC$-complement hypergraph of $H$  \\
		\hline
		
		84. & $\overline{H} \underset{N_{H^c}[\mathcal{V}]}{\bigvee}H^c$ &  ~Total $NC$-neighbourhood-complement hypergraph of $H$ \\
		\hline
		
		85. & $\overline{H} \underset{N_{H^c}[\mathcal{V}]}{\bigvee}K_n$ & ~$NC$-complete total complement hypergraph of $H$ \\
		\hline	
		
		86. & $\overline{H} \underset{N_{H^c}[\mathcal{V}]}{\bigvee}K_n^{c}$ & ~$NC$-total complement hypergraph of $H$ \\
		\hline
		
		87. & $\overline{H} \underset{N_{H^c}[\mathcal{V}]}{\bigvee}\overline{H}$ & ~$NC$-neighbourhood total complement hypergraph of $H$  \\
		\hline
		
		88. & $H^c \underset{_{H_r}N[\mathcal{V}]}{\bigvee}H^c$ & ~$\overline{N}$-$r$-neighbourhood-complement hypergraph of $H$   \\
		\hline
		
		89. & $H^c \underset{_{H_r}N[\mathcal{V}]}{\bigvee}K_n$ & ~$\overline{N}$-$r$-complete-complement hypergraph of $H$  \\
		\hline
		
		90. & $H^c \underset{_{H_r}N[\mathcal{V}]}{\bigvee}K_n^{c}$ & ~$\overline{N}$-$r$-complement hypergraph of $H$  \\
		\hline
		
		91. & $\overline{H} \underset{_{H_r}N[\mathcal{V}]}{\bigvee}H^c$ &  ~Total $\overline{N}$-$r$-neighbourhood complement hypergraph of $H$  \\
		\hline
		
		92. & $\overline{H} \underset{_{H_r}N[\mathcal{V}]}{\bigvee}K_n$ &~$\overline{N}$-$r$-complete total complement hypergraph of $H$  \\
		\hline
		
		93. & $\overline{H} \underset{_{H_r}N[\mathcal{V}]}{\bigvee}K_n^{c}$ & ~$\overline{N}$-$r$-total complement hypergraph of $H$   \\
		\hline
		
		94. & $\overline{H} \underset{_{H_r}N[\mathcal{V}]}{\bigvee}\overline{H}$ & ~$\overline{N}$-$r$-neighbourhood total complement hypergraph of $H$\\
		\hline
		
		95. & $H^c \underset{_{H^{c}_r}N[\mathcal{V}]}{\bigvee}H^c$ & ~$\overline{N}C$-$r$-neighbourhood-complement hypergraph of $H$   \\
		\hline
		
		96. & $H^c \underset{_{H^{c}_r}N[\mathcal{V}]}{\bigvee}K_n$ & ~$\overline{N}C$-$r$-complete-complement hypergraph of $H$ \\
		\hline
		
		97. & $H^c \underset{_{H^{c}_r}N[\mathcal{V}]}{\bigvee}K_n^{c}$ & ~$\overline{N}C$-$r$-complement hypergraph of $H$   \\
		\hline
		
		98. & $\overline{H} \underset{_{H^{c}_r}N[\mathcal{V}]}{\bigvee}H^c$ &  ~Total $\overline{N}C$-$r$-neighbourhood-complement hypergraph of $H$ \\
		\hline
		
		99. & $\overline{H} \underset{_{H^{c}_r}N[\mathcal{V}]}{\bigvee}K_n$ &~$\overline{N}C$-$r$-complete-total complement hypergraph of $H$   \\
		\hline
		
		100. & $\overline{H} \underset{_{H^{c}_r}N[\mathcal{V}]}{\bigvee}K_n^{c}$ & ~$\overline{N}C$-$r$-total complement hypergraph of $H$   \\
		\hline
		
		101. & $\overline{H} \underset{_{H^{c}_r}N[\mathcal{V}]}{\bigvee}\overline{H}$ & ~$\overline{N}C$-$r$-neighbourhood total complement hypergraph of $H$  \\
		\hline
		
		102. & $H^c \underset{_{\overline{H}_r}N[\mathcal{V}]}{\bigvee}H^c$ & ~$\overline{N}TC$-$r$-neighbourhood-complement hypergraph of $H$  \\
		\hline
		
		103. & $H^c \underset{_{\overline{H}_r}N[\mathcal{V}]}{\bigvee}K_n$ & ~$\overline{N}TC$-$r$-complete-complement hypergraph of $H$  \\
		\hline
		
		104. & $H^c \underset{_{\overline{H}_r}N[\mathcal{V}]}{\bigvee}K_n^{c}$ & ~$\overline{N}TC$-$r$-complement hypergraph of $H$   \\
		\hline
		
		105. & $\overline{H} \underset{_{\overline{H}_r}N[\mathcal{V}]}{\bigvee}H^c$ &  ~Total $\overline{N}TC$-$r$-neighbourhood-complement hypergraph of $H$ \\
		\hline
		
		106. & $\overline{H} \underset{_{\overline{H}_r}N[\mathcal{V}]}{\bigvee}K_n$ &~$\overline{N}TC$-$r$-complete total complement hypergraph of $H$  \\
		\hline
		
		107. & $\overline{H} \underset{_{\overline{H}_r}N[\mathcal{V}]}{\bigvee}K_n^{c}$ & ~$\overline{N}TC$-$r$-total complement hypergraph of $H$  \\
		\hline
		
		108. & $\overline{H} \underset{_{\overline{H}_r}N[\mathcal{V}]}{\bigvee}\overline{H}$ & ~$\overline{N}TC$-$r$-neighbourhood-total complement hypergraph of $H$ \\
		\hline
		
		109. & $K_n^c \underset{N_H[\mathcal{V}]}{\bigvee}K_n^{c}$ &  ~Duplicate hypergraph of $H$  \\
		\hline
		
		110. & $K_n \underset{N_H[\mathcal{V}]}{\bigvee}K_n^{c}$ &  ~Duplicate complete hypergraph of $H$  \\
		\hline

		111. & $K_n \underset{N_H[\mathcal{V}]}{\bigvee}K_n$ &  ~Fully complete duplicate hypergraph of $H$ \\
		\hline
		
		112. & $K_n^c \underset{_{H_r}N[\mathcal{V}]}{\bigvee}K_n^{c}$ &   ~$r$-$D\overline{N}$-hypergraph of $H$  \\
		\hline
		
		113. & $K_n \underset{_{H_r}N[\mathcal{V}]}{\bigvee}K_n^{c}$ &  ~$r$-$D\overline{N}$-complete hypergraph of $H$  \\
		\hline

		114. & $K_n \underset{_{H_r}N[\mathcal{V}]}{\bigvee}K_n$ &  ~Fully complete $r$-$D\overline{N}$-hypergraph of $H$ \\
		\hline
		
		115. & $K_n^c \underset{N_{H^{c}}[\mathcal{V}]}{\bigvee}K_n^{c}$ &  ~Complemented duplicate hypergraph of $H$  \\
		\hline
		
		116. & $K_n \underset{N_{H^c}[\mathcal{V}]}{\bigvee}K_n^{c}$ &  ~Complemented duplicate complete hypergraph of $H$  \\
		\hline

		117. & $K_n \underset{N_{H^c}[\mathcal{V}]}{\bigvee}K_n$ &  ~Fully complete complemented duplicate hypergraph of $H$  \\
		\hline
		
		118. & $K_n^c \underset{N_{\overline{H}}[\mathcal{V}]}{\bigvee}K_n^{c}$ &  ~Total complemented duplicate hypergraph of $H$  \\
		\hline
		
		119. & $K_n \underset{N_{\overline{H}}[\mathcal{V}]}{\bigvee}K_n^{c}$ &  ~Total complemented duplicate complete hypergraph of $H$  \\
		\hline

		120. & $K_n \underset{N_{\overline{H}}[\mathcal{V}]}{\bigvee}K_n$ &  ~Fully complete total complemented duplicate hypergraph of $H$  \\
		\hline
		
		121. & $K_n^c \underset{_{\overline{H}_r}N[\mathcal{V}]}{\bigvee}K_n^{c}$ &  ~Closed duplicate $r$-total complemented hypergraph of $H$ \\
		\hline
		
		122. & $K_n \underset{_{\overline{H}_r}N[\mathcal{V}]}{\bigvee}K_n^{c}$ &  ~Closed duplicate complete $r$-total complemented hypergraph of $H$  \\
		\hline

		123. & $K_n \underset{_{\overline{H}_r}N[\mathcal{V}]}{\bigvee}K_n$ & ~Fully complete closed duplicate $r$-total complemented hypergraph of $H$  \\
		\hline
		
		124. & $K_n^c \underset{_{H^c_r}N[\mathcal{V}]}{\bigvee}K_n^{c}$ &  ~Closed $r$-duplicate hypergraph of $H$  \\
		\hline
		
		125. & $K_n \underset{_{H^c_r}N[\mathcal{V}]}{\bigvee}K_n^{c}$ &  ~Closed $r$-duplicate complete hypergraph of $H$  \\
		\hline

		126. & $K_n \underset{_{H^c_r}N[\mathcal{V}]}{\bigvee}K_n$ &  ~Fully complete closed $r$-duplicate hypergraph of $H$  \\
		\hline
		
		
		%
		%
		%
		\caption{New unary hypergraph operations defined as tensor join of two hypergraphs}\label{utab}
	\end{longtable}	
	When $r=1$, the hypergraph given in S.No.1 of Table~\ref{utab} becomes $H \underset{I[\mathcal{V}]}{\bigvee}H$ and we call it simply as \textit{the mirror hypergraph of $H$}. Similarly, the rest of the hypergraph operations defined in Table~\ref{utab} in which $_rT[\mathcal{V}]$ is involved can be renamed.
	
	Now, we show that the hypergraph operations listed in Table~\ref{Utab} are unary. Consider the $r$-Mirror hypergraph of $H$. It is constructed from the hypergraph $H$ as follows: First take $H$ and corresponds to each of its vertex, add a new vertex. Now, make each set $S$ of new vertices as an edge in the $r$-Mirror hypergraph of $H$ if and only if the set of vertices in $H$ corresponding to the vertices of $S$ forms an edge in $H$. Then for each $r$-subset $S_r$ of vertices of $H$, make the set of all vertices in $S_r$ together with all the new vertices corresponding to each vertices in $S_r$ as an edge in the $r$-Mirror hypergraph of $H$. The resulting hypergraph is the desired one. Similarly, the rest of the operations can be viewed.
	\subsection{Some unary hypergraph operations as $\mathcal{T}^*$-join of hypergraphs in $\mathcal{G}$}
	Let $G_i(V_i,E_i), i=1,2,\dots,k$ be $k(>1)$ copies of a hypergraph $H$ with $|V(H)|=n$. 
	\normalfont
	Let $\mathcal{G}=(G_i(V_i,E_i,W_i))_{i=1}^k$. For each $S\in\widehat{[k]}$, let $\mathcal{V}_S=(V_i)_{i\in S}$. Let $\mathcal{T}^*=\{T^*[\mathcal{V}_S]~|~S\in \widehat{[k]}\}$ be a set of indicating tensors of type-2. In Table~\ref{Tabb}, we list some new classes of unary hypergraph operations as  $\mathcal{T}^*$-join of hypergraphs in $\mathcal{G}$, for some suitable $\mathcal{T}^*$ as mentioned in the same table.  In this table, we take $1<l\leq k$ and $\mathbf{0}$ denotes a zero tensor of appropriate order and dimension. 
	\setlength\extrarowheight{1pt}
	\renewcommand{\arraystretch}{1.0}
	\tabcolsep=0.11cm
	\begin{longtable}{ | M{1.2cm}| M{6cm} |M{4cm}| }
		\hline
		\textbf{S. No.}  & \textbf{Name of the hypergraph}  &
		$T^*[\mathcal{V}_S]$\\ 
		\hline
		
		1. & $(l,r)$-mirror hypergraph of $H$	&
		$\begin{cases}
			_rT[\mathcal{V}_S]&\text{if}~|S|=l,\\
			\mathbf{0}&\text{otherwise}
		\end{cases}$
		\\
		\hline
		
		2.& Join $l$-neighbourhood hypergraph of $H$ & 
		$\begin{cases}
			J[\mathcal{V}_S] &\text{if}~|S|=l,\\ \mathbf{0} &\text{otherwise}
		\end{cases}$
		\\
		\hline
		
		3. & $VC$-$(l,r)$-neighbourhood hypergraph of $H$& $\begin{cases}
			\Im_r[\mathcal{V}_S]&\text{if}~|S|=l,\\
			\mathbf{0} &\text{otherwise}
		\end{cases}$  \\
		\hline
		
		%
		\caption{Viewing some new unary hypergraph operations as $\mathcal{T}^*$-join of hypergraphs in $\mathcal{G}$.} \label{Tabb} \end{longtable}
	\subsection{Some classes of hypergraphs as $(H,\mathcal{T})$-join of hypergraphs}
	
	Whenever we consider the $(H,\mathcal{T})$-weighted/unweighted join of weighted/unweighted hypergraphs, without loss of generality, we take the vertex set of $H$ of cardinality $k$ as $[k].$
	
	Let $H$ be a hypergraph with $|V(H)|=k$ and let $\mathcal{G}=(G_i(V_i,E_i))_{i=1}^k$ be a sequence of $k$ hypergraphs. For each $e\in E(H)$, let $\mathcal{V}_e=(V_i)_{i\in e}$.
	In Table~\ref{Tab4}, we list some classes of hypergraphs that can be viewed as a $(H, \mathcal{T})$-join of hypergraphs in $\mathcal{G}$, for some suitable $H$, $G_i$ and $\mathcal{T}$.  
	\setlength\extrarowheight{1pt}
\renewcommand{\arraystretch}{1.0}
\tabcolsep=0.11cm
\begin{longtable}{ | M{.61cm}| M{4.5cm} | M{.8cm} |M{.8cm}| M{4cm}| }
	\hline
	\textbf{S. No.}  & \textbf{Name of the hypergraph}  & $H$ &   $G_i$ &
	$\mathcal{T}$\\ 
	\hline
	
	1. & Join of set $\mathcal{G}$ of $m$-uniform hypergraphs on a backbone hypergraph $H$, $r(H)\leq m$  \cite{sarkar2020joins} & $H$	& $G_i$  & $\{{}_{B_e}T[\mathcal{V}_e]~|~e\in E(H)\}$, where $B_e=\{m\}$.
	\\
	\hline
	
	2.& Join of set $\mathcal{G}$ of non-uniform hypergraphs on a backbone hypergraph $H$ \cite{sarkar2020joins} & $H$ & $G_i$ & $\{{}_{B_e}T[\mathcal{V}_e]~|~e\in E(H)\}$, where $B_e\subseteq \{|e|,|e|+1,\dots,N_e\}$. 
	\\
	\hline
	
	3. & Complete $m$-uniform strong  $k$-partite hypergraph $(k\geq m)$\cite{sarkar2020joins} & $K_k^m$ & $K_{n_i}^c$  & $\{{}_{B_e}T[\mathcal{V}_e]~|~e\in E(H)\}$, where $B_e=\{m\}$.  \\
	\hline
	
	4. & Complete strong $k$-partite hypergraph & $K_k$ & $K_{n_i}^c$  &$\{{}_{B_e}T[\mathcal{V}_e]~|~e\in E(H)\}$, where $B_e=\{|e|\}$. \\
	\hline
	
	5. & Lexicographic product of the hypergraphs $H_1$ and $H_2$ \cite{hellmuth2012survey} & $H_1$  & $H_2$ &  $\{{}_{B_e}T[\mathcal{V}_e]~|~e\in E(H)\}$, where $B_e=\{|e|\}$. \\
	\hline
	
	6. & Cartesian product of the hypergraphs $H_1$ and $H_2$ \cite{banerjee2021spectrum} & $H_1$  & $H_2$ & $\{I[\mathcal{V}_e]~|~e\in E(H)\}$\\
	\hline
	\caption{Viewing some existing and new class of hypergraphs as $(H,\mathcal{T})$-join of hypergraphs.} \label{Tab4} \end{longtable}
	\section{Spectra of the tensor join of weighted hypergraphs}\label{Sec4}
	In this section, we obtain the characteristic polynomial of the adjacency, the Laplacian, the normalized Laplacian matrices of some classes of hypergraphs constructed by the tensor join operations defined in Section~\ref{defn sec}. For the computation of the normalized Laplacian spectrum of the tensor join of hypergraphs, it is assumed that the constituting hypergraphs  do not have isolated vertices.
	\subsection{Spectra of the $T[\mathcal{A}]$-join of hypergraphs}	
	Let $\mathcal{G}=(G_i(V_i,E_i,W_i))_{i=1}^k$ be a sequence of $k$
	weighted hypergraphs. Consider an indicating tensor $T[\mathcal{V}]$, where $\mathcal{V}=(V_i)_{i=1}^k$. We construct the hypergraph $\underset{T[\mathcal{V}]}{\bigvee}\mathcal{G}(V,E,W)$ with a weight function $W:E\rightarrow \mathbb{R}_{\geq 0}$ defined by,	
	\begin{align}\label{wt}
		W(e)=\begin{cases}
			W_i(e)&\text{if}~ e\in E_i;\\
			~~w_c &\text{if}~ e\notin E_i ~\text{with}~|e|=c,~ \text{for}~i=1,2,\dots,k.
		\end{cases},
	\end{align} 
	where $w_c$ is a non-negative real number corresponding to a new edge of cardinality $c$.
	
	Throughout this section, we consider the weight function as defined above for any $T[\mathcal{V}]$-join of weighted hypergraphs, unless, we specifically mentioned otherwise.

	\begin{thm}\label{t2.2}
		Let $\mathcal{G}=(G_i(V_i,E_i,W_i))_{i=1}^k$ be a sequence of weighted $r_i$-regular hypergraphs $G_i$ with $|V_i|=n_i$, let $\mathcal{V}= (V_i)_{i=1}^k$ and let  $X=\{2,3,\dots,N\}.$ Consider an indicating tensor $T[\mathcal{V}]$ such that for every $p\in V_i$, $q\in V_j$ and $c\in X$, $|E_{p,q}^{c}(T[\mathcal{V}])|$ is a constant, say $n_{ij}^{(c)}$ for all $1\leq i\leq j\leq k$. Then the characteristic polynomial of the adjacency (resp. the Laplacian, the normalized Laplacian) matrix of the weighted hypergraph $\underset{T[\mathcal{V}]}{\bigvee}\mathcal{G}$ is
		\begin{center}
			$\left\{\underset{i=1}{\overset{k}{\prod}}~\underset{j=1;j\neq i}{\overset{n_i}{\prod}}(x-\alpha_i\lambda_{ij}-\beta_i)\right\}\times P_R(x)$,
		\end{center} where $\lambda_{ij}$ is a non-Perron adjacency eigenvalue of $G_i$ for $i=1,2,\dots,k$; $j=1,2,\dots,n_i$ and 
		
		%
		\[R=
		\begin{bmatrix}
			r_1\alpha_1+\beta_1+n_1\gamma_1& n_2\delta_{12}\displaystyle\underset{c\in X}{ \sum}\frac{w_c\cdot n_{12}^{(c)}}{c-1} &\cdots &n_k\delta_{1k}\displaystyle\underset{c\in X}{ \sum}\frac{w_c\cdot n_{1k}^{(c)}}{c-1} \\
			n_1\delta_{12}\displaystyle\underset{c\in X}{ \sum}\frac{w_c\cdot n_{12}^{(c)}}{c-1}& r_2\alpha_2+\beta_2+n_2\gamma_2& \cdots & 	n_k\delta_{2k}\displaystyle\underset{c\in X}{ \sum}\frac{w_c\cdot n_{2k}^{(c)}}{c-1}\\
			\vdots & \vdots & \ddots& \vdots  \\
			n_1\delta_{1k}\displaystyle\underset{c\in X}{ \sum}\frac{w_c\cdot n_{1k}^{(c)}}{c-1}& n_2\delta_{2k}\displaystyle\underset{c\in X}{ \sum}\frac{w_c\cdot n_{2k}^{(c)}}{c-1} & \cdots & r_k\alpha_k+\beta_k+n_k\gamma_k	 
		\end{bmatrix}_{k\times k}
		\]
		and	for $1\leq i\leq j\leq k$, the values $\alpha_i, ~\beta_i,~\gamma_i,~\delta_{ij}$ are given in Table~\ref{values table1} corresponding to the respective matrices, where
		$$z_i=r_i+(n_i-1)\sum\limits_{c\in X}\frac{n_{ii}^{(c)}\cdot w_c}{c-1}+\sum\limits_{j=1,j\neq i}^{k}n_j\sum\limits_{c\in X}\frac{n_{ij}^{(c)}\cdot w_c}{c-1}~\text{with}~ n_{ij}^{(c)}=n_{ji}^{(c)}~\text{for}~ i,j=1,2,\dots,k.$$
		\setlength\extrarowheight{1pt}
		\renewcommand{\arraystretch}{1.0}
		\tabcolsep=0.11cm
		\begin{center}
			\begin{longtable}{ | M{4cm} | M{1cm} |M{2.5cm} |M{1cm} |M{1cm}| }
				\hline
				\textbf{Name of the matrix} & $\alpha_i$ & $\gamma_i$ & $\beta_i$ & $\delta_{ij}$ \\ \hline
				Adjacency matrix & $1$ & 	$\sum\limits_{c\in X}\frac{w_c\cdot n_{ii}^{(c)}}{c-1}$ & $-\gamma_i$ & $1$\\
				\hline
				Laplacian matrix & $-1$ & $-\sum\limits_{c\in X}\frac{w_c\cdot n_{ii}^{(c)}}{c-1}$ & $z_i-\gamma_i$ & $-1$ \\
				\hline 
				Normalized Laplacian matrix & $\frac{-1}{z_i}$ & $\alpha_i\sum\limits_{c\in X}\frac{w_c\cdot n_{ii}^{(c)}}{c-1}$ & $1-\gamma_i$ & $\frac{-1}{\sqrt{z_iz_j}}$\\
				\hline\caption{Necessary values to compute the spectrum of the matrices associated with $\underset{T[\mathcal{V}]}{\bigvee}\mathcal{G}$}\label{values table1}
			\end{longtable}	
		\end{center}
	\end{thm}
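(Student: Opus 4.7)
The plan is to derive a block decomposition of each matrix ($A$, $L$, $\mathcal{L}$) that conforms to the pattern of Theorem~\ref{m thm} and then invoke that theorem directly. First, I would index the vertices of $\underset{T[\mathcal{V}]}{\bigvee}\mathcal{G}$ by blocks $V_1,V_2,\dots,V_k$. For $p\in V_i$ and $q\in V_j$ with $p\neq q$, a direct reading of the adjacency definition gives
\[
A\!\left(\underset{T[\mathcal{V}]}{\bigvee}\mathcal{G}\right)_{pq}=[A(G_i)]_{pq}\,\delta_{ij}+\sum_{c\in X}\frac{w_c\,n_{ij}^{(c)}}{c-1},
\]
where the first term appears only when $i=j$. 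The uniformity hypothesis $|E_{p,q}^{c}(T[\mathcal{V}])|=n_{ij}^{(c)}$ is exactly what forces the tensor-join contribution to depend only on the block indices, so the off-diagonal block $(i,j)$ becomes $\rho_{ij}J_{n_i\times n_j}$ with $\rho_{ij}=\sum_{c\in X}w_c n_{ij}^{(c)}/(c-1)$, and the diagonal block $(i,i)$ becomes $A(G_i)+\gamma_i(J_{n_i}-I_{n_i})$ with $\gamma_i=\sum_{c\in X}w_c n_{ii}^{(c)}/(c-1)$.

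Next, since $G_i$ is $r_i$-regular, $A(G_i)$ has constant row sum $r_i$ and every non-perron eigenvector $v$ of $A(G_i)$ satisfies $J v=0$. Therefore the diagonal block $A(G_i)+\gamma_i(J-I)$ has row sum $r_i+(n_i-1)\gamma_i$ and acts on such a $v$ as multiplication by $\lambda_{ij}-\gamma_i=\alpha_i\lambda_{ij}+\beta_i$ with $\alpha_i=1,\beta_i=-\gamma_i$. Applying Theorem~\ref{m thm} with this data yields $\prod_{i,j\neq 1}(x-\alpha_i\lambda_{ij}-\beta_i)\cdot P_R(x)$ for the adjacency characteristic polynomial, where the top-row entry $r_i\alpha_i+\beta_i+n_i\gamma_i$ of $R$ is precisely the block-row sum and the off-diagonal entries $n_j\delta_{ij}\rho_{ij}$ (with $\delta_{ij}=1$) come from the $\rho_{ij}J$ pattern. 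This matches the stated form.

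For the Laplacian, I would first observe that the row sum of the $(i,i)$ block of $A$ together with the contributions from the off-diagonal blocks gives the valency of every vertex in $V_i$ as
\[
z_i=r_i+(n_i-1)\gamma_i+\sum_{j\neq i}n_j\rho_{ij},
\]
a constant within each block. Hence $D=\mathrm{diag}(z_1 I_{n_1},\dots,z_k I_{n_k})$ and $L=D-A$ inherits the same block shape: the $(i,i)$ block is $(z_i+\gamma_i)I_{n_i}-A(G_i)-\gamma_i J_{n_i}$, and the $(i,j)$ block is $-\rho_{ij}J$. Checking the action on a non-perron eigenvector $v$ of $A(G_i)$ gives $\alpha_i\lambda_{ij}+\beta_i$ with $\alpha_i=-1$ and $\beta_i=z_i+\gamma_i=z_i-(-\gamma_i)$, matching the table when one renames the table-$\gamma_i$ as $-\gamma_i$. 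A second application of Theorem~\ref{m thm} produces the required form with $\delta_{ij}=-1$. The normalized Laplacian case is then obtained by conjugating $L$ with $D^{-1/2}$, which scales the $(i,j)$ block by $(z_i z_j)^{-1/2}$ (producing $\delta_{ij}=-1/\sqrt{z_i z_j}$) and the $(i,i)$ block by $1/z_i$ (producing $\alpha_i=-1/z_i$ and $\beta_i=1+\gamma_i/z_i=1-(-\gamma_i/z_i)$), after which Theorem~\ref{m thm} again concludes the argument.

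I expect the main obstacle to be purely bookkeeping: keeping the three scalings of $\alpha_i,\beta_i,\gamma_i,\delta_{ij}$ in Table~\ref{values table1} consistent with the sign conventions of $L=D-A$ and $\mathcal{L}=D^{-1/2}LD^{-1/2}$, and verifying that the diagonal entries $r_i\alpha_i+\beta_i+n_i\gamma_i$ of $R$ indeed equal the row sums of the transformed diagonal blocks in each case. Once the block structure is written down and the uniformity hypothesis is correctly used to prove $\rho_{ij}$ and $\gamma_i$ are well defined, Theorem~\ref{m thm} does all the spectral work, so there is no genuinely hard analytic step.
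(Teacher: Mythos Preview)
Your proposal is correct and follows essentially the same approach as the paper: identify that the adjacency (resp.\ Laplacian, normalized Laplacian) matrix has diagonal blocks of the form $\alpha_i A(G_i)+\beta_i I_{n_i}+\gamma_i J_{n_i}$ with constant row sum $r_i\alpha_i+\beta_i+n_i\gamma_i$ (by $r_i$-regularity) and off-diagonal blocks that are scalar multiples of $J_{n_i\times n_j}$ (by the uniformity hypothesis on $|E_{p,q}^{c}(T[\mathcal{V}])|$), and then invoke Theorem~\ref{m thm}. The paper's proof is in fact terser than yours, simply asserting this block form and applying Theorem~\ref{m thm} without writing out the separate $L$ and $\mathcal{L}$ derivations.
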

	
	\begin{proof}
		The adjacency (resp. the Laplacian, the normalized Laplacian) matrix of $\underset{T[\mathcal{V}]}{\bigvee}\mathcal{G}$ is a $k\times k$ symmetric block matrix of order $N\times N$ in which the $(i,i)^{th}$ block is $$\alpha_iA(G_i)+\beta_iI_{n_i}+\gamma_iJ_{n_i}$$ and for $i\neq j$, the $(i,j)^{th}$ block is $$\delta_{ij}\displaystyle\underset{c\in X}{ \sum}\frac{w_c\cdot n_{ij}^{(c)}}{c-1}J_{n_i\times n_j},$$ 
		where $N:=\sum\limits_{i=1}^kn_i$ and the values $\alpha_i, \beta_i, \gamma_i, \delta_{ij}$ for $i,j=1,2,\dots,k$ are given in Table~\ref{values table1}

		Notice that for each $i=1,2,\dots,k$, the adjacency matrix $A(G_i)$ of $G_i$ is real symmetric of order $n_i$ with the constant row sum $r_i$. Thus each $A(G_i)$ has an orthogonal basis of $\mathbb{R}^{n_i}$ consisting of its eigenvectors, including the all-one vector $J_{n_i\times 1}$ corresponds to the eigenvalue $r_i$.
		Let us denote the eigenvectors of $A(G_i)$ by $X_{i1}(=J_{n_{i\times 1}})
		,X_{i2},\cdots,X_{in_{i}}$ corresponds to the eigenvalues $\lambda_{i1}(=r_i), \lambda_{i2},\cdots,\lambda_{in_i},$ for all $i=1,2,\dots,n_i$. Let $$\mathcal{X}_{\mathbf{ij}}:=[\mathbf{0},\mathbf{0},\cdots,\underset{\text{i-th~place}}{\underbrace{X_{ij}}},\mathbf{0},\cdots\mathbf{0}]_{\underset{\mathbf{1\times N}}{}}^{\overset{\mathbf{T}}{}}$$ for all $i=1,2,\dots,k$, $j=2,\dots,n_i$.
		Then for each $i=1,2,\dots,k$, $j=2,\dots,n_i$,  $\alpha_i\lambda_{ij}+\beta_i$ is an eigenvalue of $\mathcal{A}$ corresponds to the eigenvector $\mathcal{X}_{\mathbf{ij}}$.
		%
		%
		%
		Since, the span of the remaining $k$ eigenvectors of $\mathcal{A}$ is same as the span of vectors $$[\mathbf{0},\mathbf{0},\dots,\underset{\text{i-th~place}}{\underbrace{J_{n_i\times 1}}},\mathbf{0},\dots,\mathbf{0}]_{\underset{\mathbf{1\times N}}{}}^{\overset{\mathbf{T}}{}}, i=1,2,\dots,k,$$ so let $\mu$ be an eigenvalue of $\mathcal{A}$ corresponds to the eigenvector 
		$$\mathcal{Y}=[a_1J_{n_1\times 1},a_2J_{n_2\times 1},\cdots,a_kJ_{n_k\times 1}],$$ where $(a_1,a_2,\dots, a_k)$ is a non-zero vector in $\mathbb{R}^k.$
		Then the system of equations $(\mathcal{A}-\mu)\mathcal{Y}=\mathbf{0}$ reduces to the system of equations $(R-\mu)y'=\mathbf{0}$, where $y'=(a_1,a_2,\dots, a_k)$ and the matrix $R$ is as mentioned in the statement of this theorem. Thus the remaining eigenvalues of $\mathcal{A}$ are the eigenvalues of the matrix $R$. This completes the proof.		
	\end{proof}	

	In the following corollary, we establish infinite families of cospectral hypergraphs by using the $T[\mathcal{A}]$-join operation on hypergraphs.
	\begin{cor}
		\normalfont
		Let $G_i(V_i,E_i,W_i)$ and $G_i'(V_i',E_i',W_i')$ be $r_i$-regular $A-$cospectral weighted hypergraphs for $i=1,2,\dots, k$. Let $\mathcal{G}=(G_i(V_i,E_i,W_i))_{i=1}^k$, $\mathcal{G}'=(G_i'(V_i',E_i',W_i'))_{i=1}^{k}$ and let $X=\{2,3,\dots,N\}$.  Let $\mathcal{V}=(V_i)_{i=1}^k$ and $\mathcal{V}'=(V_i')_{i=1}^k$. Consider an indicating tensor $T[\mathcal{V}]$ such that for every $p\in V_i$ and $q\in V_j$, $|E_{p,q}^{c}(T[\mathcal{V}])|=n_{ij}^{(c)}$ for all $c\in X,~1\leq i\leq j\leq k$. Let $T[\mathcal{V}']$ be an indicating tensor such that $T[\mathcal{V}']=T[\mathcal{V}]$. Then the weighted $T[\mathcal{V}]$-join of hypergraphs in $\mathcal{G}$ and the weighted $T[{\mathcal{V}'}]$-join of hypergraphs in $\mathcal{G}'$ are simultaneously $A-$cospectral, $L-$cospectral and $\mathcal{L}-$cospectral.
	\end{cor}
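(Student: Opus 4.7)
The plan is to apply Theorem~\ref{t2.2} to both weighted hypergraphs $\underset{T[\mathcal{V}]}{\bigvee}\mathcal{G}$ and $\underset{T[\mathcal{V}']}{\bigvee}\mathcal{G}'$ for each of the three matrices (adjacency, Laplacian, normalized Laplacian), and show that the resulting characteristic polynomials coincide in each case. Since the hypothesis $T[\mathcal{V}']=T[\mathcal{V}]$ forces $|V_i|=|V_i'|=n_i$ for all $i$ and forces the counts $|E_{p,q}^{c}(T[\mathcal{V}'])|=n_{ij}^{(c)}$ to match, all structural constants on which Theorem~\ref{t2.2} depends are common to the two constructions.

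First I would observe that, because $G_i$ and $G_i'$ are both $r_i$-regular and $A$-cospectral, they share the same multiset of adjacency eigenvalues. In particular, removing the common Perron eigenvalue $r_i$ (with equal multiplicities) yields the same list of non-Perron adjacency eigenvalues $\lambda_{ij}$, for $j=1,2,\dots,n_i-1$. Hence the factor $\prod_{i=1}^{k}\prod_{j=1,j\neq i}^{n_i}\bigl(x-\alpha_i\lambda_{ij}-\beta_i\bigr)$ appearing in Theorem~\ref{t2.2} is the same for both hypergraphs, once the scalars $\alpha_i,\beta_i$ are shown to be common.

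Next I would inspect Table~\ref{values table1}: for each of the three matrices, the entries $\alpha_i,\beta_i,\gamma_i,\delta_{ij}$ (and the auxiliary $z_i$) are completely determined by the data $r_i,n_i,w_c,n_{ij}^{(c)}$, none of which depends on whether we use $\mathcal{G}$ or $\mathcal{G}'$. Consequently the $k\times k$ matrix $R$ in the statement of Theorem~\ref{t2.2} is identical for $\underset{T[\mathcal{V}]}{\bigvee}\mathcal{G}$ and $\underset{T[\mathcal{V}']}{\bigvee}\mathcal{G}'$ for each of the three matrix types, so the factor $P_R(x)$ matches as well. Multiplying the two matching factors yields equality of the characteristic polynomials, giving simultaneous $A$-, $L$-, and $\mathcal{L}$-cospectrality.

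The only subtle point, and the one I would flag, is that in general $A$-cospectrality of two hypergraphs does not imply $L$- or $\mathcal{L}$-cospectrality; the argument here succeeds precisely because each $G_i$ is \emph{regular}. Regularity makes the valency matrix $D(G_i)$ scalar, so $L(G_i)$ and $\mathcal{L}(G_i)$ are affine functions of $A(G_i)$, and this is exactly what allows the entries in Table~\ref{values table1} for the Laplacian and normalized Laplacian to be expressed purely through $r_i$ and the structural constants rather than through spectral data of individual $G_i$. Once this regularity reduction is made explicit, the rest of the argument is a direct bookkeeping application of Theorem~\ref{t2.2}.
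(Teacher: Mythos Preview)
Your proposal is correct and follows essentially the same route as the paper: the paper's proof is a one-line invocation of Theorem~\ref{t2.2}, noting that $r_i$-regularity together with $A$-cospectrality of $G_i$ and $G_i'$ makes all the inputs to that theorem coincide. Your write-up simply unpacks this in more detail, making explicit why the scalars $\alpha_i,\beta_i,\gamma_i,\delta_{ij},z_i$ and hence the matrix $R$ are common to both constructions, and adding the useful remark that regularity is what allows $A$-cospectrality alone to force $L$- and $\mathcal{L}$-cospectrality.
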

	\begin{proof}
		Since $G_i$ and $G_i'$ are $r_i$-regular and have the same adjaceny spectrum, the result directly follows from Theorem~\ref{t2.2}.
	\end{proof}
	\begin{cor}\label{c2.1}
		\normalfont
		Let $\mathcal{G}=(G_i(V_i,E_i,W_i))_{i=1}^k$, where $G_i$ is a weighted $r_i$-regular $m$-uniform hypergraph with $|V_i|=n_i$ for $i=1,2,\dots,k$. Let $\mathcal{V}=(V_i)_{i=1}^k$. Consider an indicating tensor $T[\mathcal{V};m]$ such that for every $p\in V_i$ and $q\in V_j$, $|E_{p,q}^{m}(T[\mathcal{V};m])|$ is a constant, say $n_{ij}^{(m)}$, for $1\leq i\leq j\leq k$.
		Then the characteristic polynomial of the adjacency (resp. the Laplacian, the normalized Laplacian) matrix of the weighted hypergraph $\underset{T[\mathcal{V};m]}{\bigvee}\mathcal{G}$ is
		\begin{center}
			$\left\{\underset{i=1}{\overset{k}{\prod}}~\underset{j=1;j\neq i}{\overset{n_i}{\prod}}(x-\alpha_i\lambda_{ij}-\beta_i)\right\}\times P_R(x)$,
		\end{center}where $\lambda_{ij}$ is a non-Perron adjacency eigenvalue of $G_i$ for $i=1,2,\dots,k;~j=1,2,\dots,n_i$ and 
		\[R=
		\begin{bmatrix}
			r_1\alpha_1+\beta_1+n_1\gamma_1& \delta_{12}\cdot n_2\frac{w_m\cdot n_{12}^{(m)}}{m-1} &\cdots &\delta_{1k}\cdot n_k\frac{w_m\cdot n_{1k}^{(m)}}{m-1} \\
			\delta_{12}\cdot n_1\frac{w_m\cdot n_{12}^{(m)}}{m-1}& r_2\alpha_2+\beta_2+n_2\gamma_2& \cdots & 	\delta_{2k}\cdot n_k\frac{w_m\cdot n_{2k}^{(m)}}{m-1} \\
			\vdots & \vdots & \ddots& \vdots  \\
			\delta_{1k}\cdot n_1\frac{w_m\cdot n_{1k}^{(m)}}{m-1}& \delta_{2k}\cdot n_2\frac{w_m\cdot n_{2k}^{(m)}}{m-1}& \cdots & r_k\alpha_k+\beta_k+n_k\gamma_k	 
		\end{bmatrix}_{k\times k}
		\]
		and for $1\leq i\leq j\leq k$, the values $\alpha_i, ~\beta_i,~\gamma_i,~\delta_{ij}$ are given in Table~\ref{values table2}; $$z_i=r_i+(n_i-1)\frac{n_{ii}^{(m)}w_m}{m-1}+\sum\limits_{j=1,j\neq i}^{k}n_j\frac{n_{ij}^{(m)}w_m}{m-1}$$	with $n_{ij}^{(m)}=n_{ji}^{(m)}$ for $i,j=1,2,\dots,k.$\\
		\setlength\extrarowheight{1pt}
		\renewcommand{\arraystretch}{1.0}
		\tabcolsep=0.11cm
		\begin{longtable}{ | M{4cm} | M{1cm} |M{2cm} |M{1.5cm} |M{1cm}| }
			\hline
			\textbf{Name of the matrix} & $\alpha_i$ & $\gamma_i$ & $\beta_i$ & $\delta_{ij}$ \\ \hline
			Adjacency matrix & $1$ & 	$\frac{w_m\cdot n_{ii}^{(m)}}{m-1}$ & $-\gamma_i$ & $1$\\
			\hline
			Laplacian matrix & $-1$ & $-\frac{w_m\cdot n_{ii}^{(m)}}{m-1}$ & $z_i-\gamma_i$ & $-1$ \\
			\hline 
			Normalized Laplacian matrix& $\frac{-1}{z_i}$ & $\alpha_i\frac{w_m\cdot n_{ii}^{(m)}}{m-1}$ & $1-\gamma_i$ & $\frac{-1}{\sqrt{z_iz_j}}$\\
			\hline\caption{Necessary values to compute the spectrum of the matrices associated with $\underset{T[\mathcal{V};m]}{\bigvee}\mathcal{G}$.}\label{values table2}
		\end{longtable}
	\end{cor}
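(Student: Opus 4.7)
The plan is to obtain this corollary as a direct specialization of Theorem~\ref{t2.2}. The key observation is that the indicating tensor $T[\mathcal{V};m]$, by its very definition, satisfies $T[\mathcal{V};m]_{p_1p_2\dots p_N}=0$ whenever $|\mathcal{R}^{\blacktriangledown}(p_1,p_2,\dots, p_N)|\neq m$; consequently, every element of $E(T[\mathcal{V};m])$ has cardinality exactly $m$. Combined with the fact that each $G_i$ is assumed to be $m$-uniform, this forces the hypergraph $\underset{T[\mathcal{V};m]}{\bigvee}\mathcal{G}$ to be $m$-uniform as well, so all its edges have cardinality $m$.

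The consequence of this for the quantities $|E_{p,q}^{c}(T[\mathcal{V};m])|$ appearing in Theorem~\ref{t2.2} is that $n_{ij}^{(c)}=0$ for every $c\in X\setminus\{m\}$ and for all $1\le i\le j\le k$. Therefore, in the statement of Theorem~\ref{t2.2}, each summation
\[
\sum_{c\in X}\frac{w_c\cdot n_{ij}^{(c)}}{c-1}
\]
collapses to the single surviving term $\dfrac{w_m\cdot n_{ij}^{(m)}}{m-1}$. Substituting this into the off-diagonal blocks of the matrix $R$ and into the values of $\beta_i,\gamma_i,\delta_{ij}$ in Table~\ref{values table1} yields exactly the entries of the matrix $R$ and the values in Table~\ref{values table2} as claimed. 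The expression for $z_i$ likewise specializes in the same manner.

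Thus the whole argument reduces to invoking Theorem~\ref{t2.2} on $\underset{T[\mathcal{V};m]}{\bigvee}\mathcal{G}$ with the simplification $X=\{m\}$; no new obstacle arises, since the hypothesis that $|E_{p,q}^{m}(T[\mathcal{V};m])|$ is a constant $n_{ij}^{(m)}$ for all $p\in V_i$, $q\in V_j$ is precisely the $c=m$ case of the hypothesis of Theorem~\ref{t2.2}, and all other cases are vacuous. The characteristic polynomial factorization then reads off directly.
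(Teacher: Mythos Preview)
Your proof is correct and follows essentially the same approach as the paper: the paper's proof simply states that one takes $X=\{m\}$ in Theorem~\ref{t2.2}, which is exactly the specialization you carry out (with a more explicit justification that $n_{ij}^{(c)}=0$ for $c\neq m$). No substantive difference.
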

	\begin{proof}
		If we take $X=\{m\}$ in Theorem~\ref{t2.2}, then 
		
		$$z_i= r_i+(n_i-1)\frac{n_{ii}^{(m)}\cdot w_m}{m-1}+\sum\limits_{j=1,j\neq i}^{k}n_j\frac{n_{ij}^{(m)}\cdot w_m}{m-1}$$ 
				for all $i=1,2,\dots,k$ and so the proof follows.
	\end{proof}
	\begin{notation}
		\normalfont
		Let $S$ be a family of $k$ finite sets $A_1, A_2,\dots, A_k$ and let $c\in\{2,3,\dots,|A_1|+|A_2|+\cdots+|A_k|\}$. For $1\leq i\leq j\leq k$, we denote,
		\[
		n_{ij}^{c}(S)=
		\begin{cases}
			\displaystyle\underset{l_1+l_2+\cdots+ l_k=c-2}{\underset{l_i\geq0,~l_t>0~(t\neq i)}{\sum}}\left(^{|A_1|}_{~~l_1}\right)\left(^{|A_2|}_{~~l_2}\right)\cdots\left(^{|A_{i-1}|}_{~~l_{i-1}}\right)\left(^{|A_i|-2}_{~~~~l_i}\right) \cdots \left(^{|A_k|}_{~~l_k}\right)&\text{if}~i=j;\\
			\displaystyle\underset{l_1+l_2+\cdots+ l_k=c-2}{\underset{l_i,l_j\geq0,~l_t>0~( t\neq i,j)}{\sum}}\left(^{|A_1|}_{~~l_1}\right)\cdots\left(^{|A_{i-1}|}_{~~l_{i-1}}\right) \left(^{|A_i|-1}_{~~~l_i}\right)\cdots\left(^{|A_{j-1}|}_{~~l_{j-1}}\right)\left(^{|A_j|-1}_{~~~l_j}\right)\cdots \left(^{|A_k|}_{~~l_k}\right)&\text{if}~i\neq j;\\
			0&\text{otherwise.}
		\end{cases}	
		\]
	\end{notation}
	\begin{cor}
		\normalfont
		Assume additionally that
		the hypergraphs given in S.Nos. 4 and 5 of Table~\ref{tab3} be constructed by $r_i$-regular weighted hypergraph $H_i(V_i,E_i,W_i)$ for all $i=1,2,\dots,k$. 
		Then the characteristic polynomial of the adjacency (resp. the Laplacian, the normalized Laplacian) matrix of the weighted hypergraphs given in Table~\ref{tab3} are obtained from Theorem~\ref{t2.2} by taking the values $\alpha_i, \beta_i,\gamma_i,\delta_i$ as in that theorem, the values $n_{ij}^{(c)}$, $r_i$ as given in Table~\ref{tab4} and taking $X=B$ given in Table~\ref{tab3} for the respective hypergraph.\\	
		\setlength\extrarowheight{1pt}
	\renewcommand{\arraystretch}{1.0}
	\tabcolsep=0.11cm
	
	\begin{longtable}{ | M{.5cm} | M{4.5cm} |M{.5cm} |M{.5cm} |M{4.5cm}| }
		\hline
		\textbf{S. No.}  & \textbf{Name of the hypergraph}  & $r_i$ & $c$ & 
		$n_{ij}^{(c)}$ \\ 
		\hline							1. & Complete $m$-uniform $m$-partite hypergraph $ \cite{sarkar2020joins}$  & $0$ & 
		$m$  &$n_{ij}^{(m)}=\begin{cases}
		~~~~~0~~~~~~~~~~\text{if}~ i=j\\
		\overset{m}{\underset{p=1, p\neq i,j}{\prod}}n_p	~~\text{if}~ i\neq j
		\end{cases}$ \\
		\hline 				

		
		2. &Complete $m$-uniform weak $k$-partite hypergraph, $k\leq m$  \cite[Example 3.1.2]{sarkar2020joins} & $0$ & $m$ &  $n_{ij}^{m}(S)$, where $S=\{V(K_{n_i}^c)\}_{i=1}^k$.   \\
		\hline
		3. & Complete weak $k$-partite hypergraph & $0$ & $c$  & $n_{ij}^{c}(S)$, where $S=\{V(K_{n_i}^c)\}_{i=1}^k$.    \\
		\hline
		4. & Join of a collection $\mathcal{G}$ of non-uniform hypergraphs \cite[Theorem 3.2.1]{sarkar2020joins} & $r_i$ & $c$ & $n_{ij}^{c}(S)$, where $S=\{V_i\}_{i=1}^k$.  \\
		\hline
		
		5. & Join of a collection $\mathcal{G}$ of $m$-uniform hypergraphs \cite{sarkar2020joins} & $r_i$  &  $m$ &   $n_{ij}^{m}(S)$, where $S=\{V_i\}_{i=1}^k$.  \\
		\hline	
		\caption{Necessary values for determining the spectrum of the matrices associated with the hypergraphs given in Table~\ref{tab3}.}	\label{tab4}
	\end{longtable}
	\end{cor}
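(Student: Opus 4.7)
The plan is to recognize that each hypergraph listed in Table~\ref{tab3} is, by construction, the ${}_BT[\mathcal{V}]$-join of its indicated constituents $\mathcal{G}=(H_i)_{i=1}^k$, where $B$ is the edge-size set shown in that table and $\mathcal{V}=(V_i)_{i=1}^k$. Since each $H_i$ is $r_i$-regular (by hypothesis for S.Nos.~4, 5, and trivially since $H_i=K_{n_i}^c$ is $0$-regular for S.Nos.~1--3), Theorem~\ref{t2.2} applies once its hypothesis is verified: namely, that for every $c\in B$ and every pair $(p,q)$ with $p\in V_i$, $q\in V_j$, the cardinality $|E_{p,q}^{c}({}_BT[\mathcal{V}])|$ depends only on the triple $(i,j,c)$ and not on the specific vertices $p,q$. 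Once the common values $n_{ij}^{(c)}$ are identified, substituting them together with $r_i$ and $X=B$ into the matrix $R$ and the pre-factor $\prod_{i,j}(x-\alpha_i\lambda_{ij}-\beta_i)$ of Theorem~\ref{t2.2} produces the three desired characteristic polynomials.

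The essential work is therefore combinatorial: for each row of Table~\ref{tab3}, count the sets $S$ of size $c$ in $E({}_BT[\mathcal{V}])$ containing a fixed pair $p\in V_i$, $q\in V_j$. The definition of ${}_BT[\mathcal{V}]$ forces $S$ to meet every part $V_l$. For S.No.~1 (complete $m$-uniform $m$-partite), $S$ must take exactly one vertex from each part, so the count is $\prod_{l\neq i,j}n_l$ when $i\neq j$ and $0$ when $i=j$. For S.Nos.~2 and~3, $S$ needs only touch each part at least once, which reduces to counting compositions $(l_1,\dots,l_k)$ of $c-2$ with $l_t>0$ for $t\notin\{i,j\}$ and $l_i,l_j\geq 0$ (so that after accounting for $p$ and $q$ every part remains non-empty); summing the corresponding products of binomial coefficients produces exactly the expression $n_{ij}^{c}(S)$ introduced just above the corollary. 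S.Nos.~4 and~5 follow from the same counting argument applied to general $V_i$, with the constituting edges of each $H_i$ already present in the $T[\mathcal{V}]$-join by construction.

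The main obstacle is tracking the case distinction $i=j$ versus $i\neq j$ in the combinatorial count, and correctly reducing $|A_i|$ to $|A_i|-2$ (when both $p$ and $q$ lie in the same part) or to $|A_i|-1$ and $|A_j|-1$ (when they lie in different parts), since these adjustments account for removing the fixed vertices from the available pool before distributing the remaining $c-2$ elements. Once the identity $|E_{p,q}^{c}({}_BT[\mathcal{V}])|=n_{ij}^{c}(S)$ is verified in each row of Table~\ref{tab3}, the $r_i$-regularity of each $H_i$ guarantees that $\alpha_i A(H_i)+\beta_i I_{n_i}+\gamma_i J_{n_i}$ has constant row sums---exactly the hypothesis driving Theorem~\ref{t2.2} via Theorem~\ref{m thm}---and the three characteristic polynomials follow by routine substitution of the values listed in Table~\ref{values table1} and Table~\ref{tab4}.
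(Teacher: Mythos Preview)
Your proposal is correct and follows the approach implicit in the paper, which states this corollary without proof: the hypergraphs of Table~\ref{tab3} are by construction ${}_BT[\mathcal{V}]$-joins, so once the constancy hypothesis of Theorem~\ref{t2.2} is checked and the common values $n_{ij}^{(c)}$ identified, the result is immediate. Your combinatorial verification that $|E_{p,q}^{c}({}_BT[\mathcal{V}])|$ depends only on $(i,j,c)$ and equals the claimed expression in each row---including the careful bookkeeping of the $i=j$ versus $i\neq j$ cases and the reduction of the available pool sizes to $|A_i|-2$ or $|A_i|-1,|A_j|-1$---is exactly the detail the paper leaves to the reader after introducing the notation $n_{ij}^{c}(S)$.
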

	\subsection{Spectra of hypergraphs constructed by unary hypergraph operations}\label{sec 4.2}
	\begin{notations}\normalfont
		Let $X=\{2,3,\dots, nl\}$ and $r\in\{1,2,\dots, n\}$, where $l,k\in \mathbb{N}\backslash \{1\},~l\leq k;$ $n\in\mathbb{N}$.
		\begin{itemize}
			\item[(i)]
			For $c\in X$, let us denote
			\begin{equation}\label{p1}
				p_1^{(c)}=
				\begin{cases}
					\displaystyle\underset{\text{for some}~p(p\neq 1)}{\underset{\underset{t_j\geq0,~t_p>0 }{t_1+t_2+\cdots+ t_l=c-2,}}{\sum}}\left(^{n-2}_{~~t_1}\right)\left(^{n}_{t_2}\right)\cdots\left(^{n}_{t_{i}}\right) \cdots \left(^{n}_{t_l}\right) &\text{if}~c-2>0\\
					0  & \text{otherwise}.
				\end{cases}	
			\end{equation}
			and
			\begin{equation}\label{p2}
				p_2^{(c)}=\begin{cases}
					\displaystyle\underset{t_j\geq0}{\underset{t_1+t_2+\cdots+ t_l=c-2,}{\sum}}\left(^{n-1}_{~~t_1}\right)\left(^{n-1}_{~~t_2}\right)\left(^{n}_{t_{3}}\right)\left(^{n}_{t_{4}}\right) \cdots \left(^{n}_{t_{l}}\right)&\text{if}~ c-2\geq0\\
					0  & \text{otherwise}.
				\end{cases}
			\end{equation}
			
			\item[(iii)]
			Let $x_1:=\frac{1}{2r-1}\left(^{n-1}_{r-1}\right)$;\\			
			Let $x_2:= \begin{cases}
				0 &\text{if}~    
				r=1;  \\
				\frac{1}{2r-1}\left(^{n-2}_{r-2}\right)& \text{otherwise}.
			\end{cases} $\\
			
		\end{itemize}
	\end{notations}
	Let $H(V(H), E(H))$ be a hypergraph. Consider a weight function $W:E(H)\rightarrow \mathbb{R}_{\geq 0}$ defined by,	
	\begin{align}\label{Hwt}
		W(e)=w_{|e|} ~~~\text{for all}~e\in E(H).
	\end{align} 
	In the following theorem, we obtain the characteristic polynomial of the adjacency, the Laplacian, the normalized Laplacian matrices of the weighted hypergraphs given in S.Nos.$1$-$36$ of Table~\ref{utab} by assuming a weight function given in \eqref{Hwt} on each of the constituting hypergraphs.
	
	\begin{thm}\label{t4.2}
		Let $H$ be a hypergraph on $n$ vertices. Consider the hypergraphs $H, H^c, K_n, \overline{H}$ with the weight function given in \eqref{Hwt}. Let $G_1, G_2\in\{H, H^c, K_n, K_n^c, \overline{H}\}$. Let $\mathcal{V}=(V(G_i))_{i=1}^2$ and $T\in \{_rT[\mathcal{V}], I[\mathcal{V}], J[\mathcal{V}], \Im_r[\mathcal{V}], \Im[\mathcal{V}]\}$. If $H$ is $r'$-regular, then the characteristic polynomial of the adjacency (resp. the Laplacian, the normalized Laplacian) matrix of the weighted hypergraph $G_1 \underset{T}{\bigvee}G_2$ is  
		\begin{align*}
			\displaystyle\underset{t=1}{\overset{n}{\prod}} \left(x^2-x[\lambda_t(M(G_1)+\theta_1\beta I_n+\theta_1'\gamma J_n)+\lambda_t(M(G_2)+\theta_2\beta I_n+\theta_2'\gamma J_n)]\right.+\\	
			\left.[\lambda_t(M(G_1)+\theta_1\beta I_n+\theta_1'\gamma J_n) \times \lambda_t(M(G_2)+\theta_2\beta I_n+\theta_2'\gamma J_n)-(\lambda_{t}(\delta aI_n+\delta bJ_n))^2]\right),
		\end{align*}
		where for $i=1,2,~t=1,2,\dots,n$, $\lambda_t(M(G_i)+\theta_i\beta I_n+\theta_i'\gamma J_n)$ and $\lambda_{t}(\delta aI_n+\delta bJ_n)$ are the co-eigenvalues of the matrices $M(G_i)+\theta_i\beta I_n+\theta_i'\gamma J_n$ and $\delta aI_n+\delta bJ_n$, respectively and the values $\theta_i, \theta_i', \delta$ and $M(G_i)$ are given in Table~\ref{Utab};\\	
		$r_i=\begin{cases}
			r'&\text{if}~ G_i=H;\\
			m'-r' &\text{if}~ G_i=H^c;\\
			m-r' &\text{if}~G_i=\overline{H};\\
			m &\text{if}~ G_i=K_n;\\
			0 &\text{if}~ G_i=K_n^c,
		\end{cases}$ \\where 	$m'=\displaystyle\underset{i\in K}{\sum}w_i\left(^{n-1}_{i-1}\right)$, $K=\{|e|~|~e\in E(H)\}$ and $m=\underset{i=2}{\overset{n}{\sum}}w_i\left(^{n-1}_{i-1}\right)$. 
		\\
		For $i=1,2$, let $z_i=r_i+\beta+z;~z=n\gamma+a+nb$. The values $\beta, \gamma, a$ and $b$ are given in Table~\ref{tab2} corresponding to the tensor $T$ and the values $p_1^{(c)}$ and $p_2^{(c)}$ are given in~\eqref{p1} and~\eqref{p2}, respectively when $k=l=2$. 
		\setlength\extrarowheight{1pt}
		\renewcommand{\arraystretch}{1.0}
		\tabcolsep=0.11cm
		\begin{longtable}{ | M{4cm} | M{3.5cm} |M{1cm} |M{1.5cm} |M{2cm}| }
			\hline
			\textbf{Name of the matrix} &    $\theta_i$ & $\theta_i'$ & $\delta$ & $M(G_i)$ \\
			\hline	
			
			Adjacency matrix& $1$ & $1$ & $1$ & $A(G_i)$\\
			\hline	
			
			Laplacian matrix &  $\begin{cases}
				\frac{z}{\beta}& \text{if}~ \beta\neq0,\\ z&\text{if}~ \beta= 0.
			\end{cases}$
			&
			$-1$ & $-1$ & $L(G_i)$\\
			\hline	
			
			normalized Laplacian matrix&  
			$\begin{cases}
				\frac{1}{\beta}-\frac{1}{z_i}& \text{if}~ \beta\neq0,\\ -\frac{1}{z_i}&\text{if}~ \beta= 0.
			\end{cases}$
			& 	$-\frac{1}{z_i}$ & $-\frac{1}{\sqrt{z_1z_2}}$ & $-\frac{1}{z_i}A(G_i)$
			\\
			\hline	
			\caption{Necessary values for determining the spectrum of the matrices associated with the hypergraphs given in Table~\ref{utab}.}\label{Utab}
		\end{longtable}
		\setlength\extrarowheight{1pt}
		\renewcommand{\arraystretch}{1.0}
		\tabcolsep=0.11cm
		\begin{longtable}{ | M{2cm} | M{3.5cm} |M{3.5cm} |M{2.5cm} |M{3.5cm}| }
			\hline
			\textbf{Tensor} $T$  &    $\beta$ & $\gamma$ & $a$ & $b$ \\
			\hline	
			
			\textbf{$_rT[\mathcal{V}]$}  &    $-x_2\cdot w_{2r}$ & $x_2\cdot w_{2r}$ & $w_{2r}(x_1-x_2)$ & $x_2$ \\
			\hline	
			
			\textbf{$I[\mathcal{V}]$}  &    $0$ & $0$ & $1$ & $0$\\
			\hline	
			
			\textbf{$J[\mathcal{V}]$}  &    $-\underset{c=2}{\overset{2n}{\sum}}\frac{p_1^{(c)}\cdot w_c}{c-1}$ & $\underset{c=2}{\overset{2n}{\sum}}\frac{p_1^{(c)}\cdot w_c}{c-1}$ & $0$ & $\underset{c=2}{\overset{2n}{\sum}}\frac{p_2^{(c)}\cdot w_c}{c-1}$ \\
			\hline	
			
			\textbf{$\Im_r[\mathcal{V}]$}  &    $-\underset{c=2}{\overset{2n}{\sum}}\frac{p_1^{(c)}\cdot w_c}{c-1}+x_2\cdot w_{2r}$ & $\underset{c=2}{\overset{2n}{\sum}}\frac{p_1^{(c)}\cdot w_c}{c-1}-x_2\cdot w_{2r}$ & $w_{2r}(x_2-x_1)$ & $\underset{c=2}{\overset{2n}{\sum}}\frac{p_2^{(c)}\cdot w_c}{c-1}-x_2\cdot w_{2r}$ \\
			\hline	
			
			\textbf{$\Im[\mathcal{V}]$}  &    $-\underset{c=2}{\overset{2n}{\sum}}\frac{p_1^{(c)}\cdot w_c}{c-1}$ & $\underset{c=2}{\overset{2n}{\sum}}\frac{p_1^{(c)}\cdot w_c}{c-1}$ & $-x_1\cdot w_2$ & $\underset{c=2}{\overset{2n}{\sum}}\frac{p_2^{(c)}\cdot w_c}{c-1}-x_2\cdot w_2$ \\
			\hline
			\caption{The values of $\beta, \gamma, a$ and $b$ corresponding to the indicating tensor $T$.}\label{tab2}
		\end{longtable}
	\end{thm}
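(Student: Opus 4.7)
The underlying idea is that the matrix $M(G_1 \underset{T}{\bigvee} G_2)$ splits as a $2\times 2$ block matrix whose blocks commute, so that Theorem~\ref{t1.3} reduces the spectrum computation to the eigenvalues of a family of $2\times 2$ matrices, one for each common eigenvector of the blocks.

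First, I would write out the adjacency matrix of $G_1 \underset{T}{\bigvee} G_2$ in its natural $2\times 2$ block form, where the $(i,i)$-th block captures the within-$V(G_i)$ edges (from $G_i$ itself together with those tensor edges that drop two or more vertices into $V(G_i)$), and the $(1,2)$-th block captures the cross contributions from tensor edges containing one vertex of $V(G_1)$ and one vertex of $V(G_2)$. For a pair of vertices $u,v$, the entry equals $\sum_{e\ni u,v}\frac{w_e}{|e|-1}$, so what is required is, for each tensor $T$ in the list, a combinatorial count of the tensor edges of each cardinality containing (a) two given vertices in the same $V(G_i)$ that are "corresponding" (indexed by the same subscript) versus not, and (b) two given vertices, one in each $V(G_i)$, again split by whether they share the same subscript. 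These counts yield four scalars depending only on the combinatorial type of the pair; I would call them $\gamma$ (within-block off-diagonal), then set $\beta=-\gamma$ so the block becomes $A(G_i)+\beta I_n+\gamma J_n$ with the correct zero diagonal, and $a,b$ so that the off-diagonal block becomes $aI_n+bJ_n$, with $a+b$ the "corresponding pair" contribution and $b$ the "non-corresponding pair" contribution. Running this bookkeeping for $_rT[\mathcal V]$ produces the values $x_1,x_2$ in terms of $\binom{n-1}{r-1}$ and $\binom{n-2}{r-2}$; the cases $I[\mathcal V]$, $J[\mathcal V]$, $\Im_r[\mathcal V]$, $\Im[\mathcal V]$ then follow by the identities $I[\mathcal V]={_1T[\mathcal V]}$, $\Im_r[\mathcal V]=J[\mathcal V]-{_rT[\mathcal V]}$, etc., together with the combinatorial identities \eqref{p1}, \eqref{p2} for enumerating edges of $J[\mathcal V]$ by cardinality and by vertex distribution across $V(G_1),V(G_2)$. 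This verifies Table~\ref{tab2} for the adjacency case.

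For the Laplacian and normalized Laplacian, I would next observe that because $G_i$ is regular and the tensor contribution at every vertex of $V(G_i)$ is the same, the valency function is constant ($=z_i$) on each $V(G_i)$, so $D$ is a scaled identity on each block. Hence $L=D-A$ keeps the same block structure, with the $(i,i)$-th block becoming $L(G_i)+zI_n-\gamma J_n$ (after using $z_i-r_i-\beta=z$), and the off-diagonal blocks picking up a sign; matching against Table~\ref{Utab} gives the claimed values of $\theta_i,\theta_i',\delta,M(G_i)$. The normalized Laplacian follows by conjugating with $D^{-1/2}$, which on each block is the scalar $z_i^{-1/2}$.

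Now comes the spectral reduction. The diagonal block $M(G_i)+\theta_i\beta I_n+\theta_i'\gamma J_n$ is a polynomial in $A(H)$, $I_n$, $J_n$ (because every $G_i\in\{H,H^c,K_n,K_n^c,\overline H\}$ has $A(G_i)$ expressible as $\alpha A(H)+\beta' I_n+\gamma' J_n$ for suitable scalars, using $A(H)+A(H^c)=\frac{m'}{n-1}(J_n-I_n)$ and $A(H)+A(\overline H)=\frac{m}{n-1}(J_n-I_n)$), and the off-diagonal block $\delta(aI_n+bJ_n)$ is a polynomial in the same generators. Since $H$ is regular, $A(H)$ commutes with $J_n$, and of course $I_n$ and $J_n$ commute with everything, so all four blocks commute pairwise. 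Theorem~\ref{t1.3} then applies with $k=2$, giving $\sigma(M(G_1\underset{T}{\bigvee}G_2))=\bigcup_{t=1}^{n}\sigma(E_t)$, where $E_t$ is the $2\times 2$ matrix of co-eigenvalues on the $t$-th common eigenvector. The characteristic polynomial of $E_t$ is the displayed quadratic, and taking the product over $t=1,\dots,n$ yields the stated formula.

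The main obstacle will be the combinatorial counting in the second paragraph: the partition-based enumeration underlying $J[\mathcal V]$ and $\Im_r[\mathcal V]$ requires careful bookkeeping over cardinalities $c\in\{2,\dots,2n\}$ and over how the $c-2$ remaining slots distribute between $V(G_1)$ and $V(G_2)$, which is precisely what the constants $p_1^{(c)},p_2^{(c)}$ encode; verifying that these counts produce the exact entries of Table~\ref{tab2} and that the identity $z_i=r_i+\beta+z$ holds in every case is the computational heart of the argument. Everything afterwards is a clean application of Theorem~\ref{t1.3}.
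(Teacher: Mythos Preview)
Your proposal is correct and follows essentially the same route as the paper: write the matrix as a $2\times 2$ block matrix with diagonal blocks $M(G_i)+\theta_i\beta I_n+\theta_i'\gamma J_n$ and off-diagonal block $\delta(aI_n+bJ_n)$, observe that the blocks commute, and apply Theorem~\ref{t1.3}. The paper's own proof is extremely terse (it simply asserts the block form and the commutation and invokes Theorem~\ref{t1.3}), whereas you spell out the combinatorial counting that justifies Table~\ref{tab2} and give an explicit reason for commutation (each block is a polynomial in $A(H),I_n,J_n$, and regularity of $H$ forces $A(H)J_n=J_nA(H)$); these are exactly the details the paper leaves implicit.
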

	\begin{proof}
		The adjacency (resp. the Laplacian, the normalized Laplacian) matrix of $G_1 \underset{T}{\bigvee}G_2$ is of the form, 
		\[\mathcal{A}=
		\begin{bmatrix}
			M(G_1)+\theta_1\beta I_n+\theta_1'\gamma J_n& \delta(aI_n+bJ_n)\\ \delta(aI_n+bJ_n)&M(G_2)+\theta_2\beta I_n+\theta_2'\gamma J_n  
		\end{bmatrix}_{2n\times2n}\]
		where for $i=1,2$ the values $\beta, \gamma, a$ and $b$ corresponding to the indicating tensor $T$ are given in Table~\ref{tab2} and $M(G_i),~ \theta_i,~ \theta_i',~ \delta$ are given in the statement of Theorem~\ref{t4.2}. Since, $G_i$s are regular hypergraphs, any pair of blocks of $\mathcal{A}$ commute with each other. Thus, the proof follows from Theorem~\ref{t1.3}.
	\end{proof}
	\begin{cor}\normalfont
		In Theorem~\ref{t4.2}, let $G_1=G_2$ (= $G$, say) be $r$-regular and let $\mu_1=c,
		\mu_2,\dots, \mu_n $ be the eigenvalues of $M(G)$. Then the characteristic polynomial of the adjacency (resp. the Laplacian, the normalized Laplacian) matrix of $G \underset{T}{\bigvee}G$ is  
		\begin{align*}
			(x^2-(2x-(c+\theta\beta+n\theta'\gamma))(c+\theta\beta+n\theta'\gamma)-\delta^2(a+nb)^2)\times\\ \displaystyle\underset{i=2}{\overset{n}{\prod}} \left(x^2-2(\mu_i+\theta\beta)x+(\mu_i+\theta\beta)^2-\delta^2a^2\right),
			%
		\end{align*}where 
	
		$c=\begin{cases}
			r& \text{for the characteristic polynomial of $A\left(G \underset{T}{\bigvee}G\right)$}; \\
			0 &\text{for the characteristic polynomial of $L\left(G \underset{T}{\bigvee}G\right)$ };\\
			-\frac{r}{z'}& \text{for the characteristic polynomial of $\mathcal{L}\left(G \underset{T}{\bigvee}G\right)$}.
		\end{cases}$ 
	
\noindent and	the values $\beta,\gamma,\delta,a,b, \theta(=\theta_1=\theta_2),\theta'(=\theta_1'=\theta_2'),r(=r_1=r_2),z'(=z_1'=z_2')$ are as given in Theorem~\ref{t4.2}.
	\end{cor}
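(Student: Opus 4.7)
The plan is to specialize Theorem~\ref{t4.2} to the symmetric case $G_1=G_2=G$ and exploit the fact that $G$ being $r$-regular forces the all-ones vector to be a common eigenvector of every block matrix appearing in the statement.

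First I would collapse the notation. Because $G_1=G_2=G$, the parameters attached to the two factors coincide, so $\theta_1=\theta_2=\theta$, $\theta_1'=\theta_2'=\theta'$, $r_1=r_2=r$, and in the normalized Laplacian case $z_1=z_2=z'$. Consequently $M(G_1)=M(G_2)=M(G)$, and the product formula of Theorem~\ref{t4.2} becomes
\[
\prod_{t=1}^{n}\Bigl(x^{2}-2x\,\mu_t'-\bigl(\nu_t^{2}-(\mu_t')^{2}\bigr)\Bigr),
\]
where $\mu_t'$ denotes the co-eigenvalue of $M(G)+\theta\beta I_n+\theta'\gamma J_n$ and $\nu_t$ the co-eigenvalue of $\delta a I_n+\delta bJ_n$ corresponding to a common eigenvector $X_t$.

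Next I would compute these co-eigenvalues explicitly. Since $G$ is $r$-regular, the all-ones vector $\mathbf{1}$ is an eigenvector of $A(G)$, of $L(G)$, and of $\mathcal{L}(G)$; therefore $\mathbf{1}$ is an eigenvector of $M(G)$ with eigenvalue $\mu_1=c$, where $c=r$, $c=0$, or $c=-r/z'$ according as $M=A$, $L$, or $\mathcal{L}$ (using $M(G)=-\tfrac{1}{z'}A(G)$ in the last case). Choosing the orthonormal basis $X_1=\tfrac{1}{\sqrt{n}}\mathbf{1}, X_2,\ldots,X_n$ of eigenvectors of $M(G)$ with $X_t\perp\mathbf{1}$ for $t\ge 2$, I use $J_n\mathbf{1}=n\mathbf{1}$ and $J_nX_t=0$ for $t\ge2$ to obtain
\[
\mu_1'=c+\theta\beta+n\theta'\gamma,\qquad \mu_t'=\mu_t+\theta\beta\ \ (t\ge2),
\]
\[
\nu_1=\delta(a+nb),\qquad \nu_t=\delta a\ \ (t\ge2).
\]
These are precisely co-eigenvalues in the sense required by Theorem~\ref{t1.3}, since they arise from the common eigenvectors $X_t$.

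Finally I would substitute these values into the product. The $t=1$ factor becomes
\[
x^{2}-2x\bigl(c+\theta\beta+n\theta'\gamma\bigr)+\bigl(c+\theta\beta+n\theta'\gamma\bigr)^{2}-\delta^{2}(a+nb)^{2},
\]
which I rewrite as $x^{2}-\bigl(2x-(c+\theta\beta+n\theta'\gamma)\bigr)\bigl(c+\theta\beta+n\theta'\gamma\bigr)-\delta^{2}(a+nb)^{2}$, matching the stated first factor. For $t=2,\ldots,n$, the factor is $x^{2}-2(\mu_t+\theta\beta)x+(\mu_t+\theta\beta)^{2}-\delta^{2}a^{2}$, which is exactly the general term in the product. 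Multiplying the $t=1$ factor with the product over $t\ge2$ yields the claimed characteristic polynomial for each of $A$, $L$, and $\mathcal{L}$.

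The only subtlety, and what I expect to be the main bookkeeping obstacle, is keeping track of the three different meanings of $c$ across the three cases and verifying that the $\mathcal{L}$-value $c=-r/z'$ indeed results from $M(G)=-\tfrac{1}{z'}A(G)$; once this identification is in place the rest is a direct substitution into Theorem~\ref{t4.2}.
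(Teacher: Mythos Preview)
Your proposal is correct and follows essentially the same approach as the paper's proof: specialize Theorem~\ref{t4.2} to $G_1=G_2=G$, use that $M(G)$ is real symmetric with the all-ones vector as an eigenvector (for eigenvalue $c$) so that an orthogonal eigenbasis $X_1,\ldots,X_n$ of $M(G)$ also serves as a common eigenbasis for $J_n$ and $I_n$, compute the co-eigenvalues separately for $t=1$ and $t\ge 2$, and substitute. The paper's version is slightly more compact in that it does not spell out the three values of $c$ inside the proof, but the logic is identical.
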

	\begin{proof}
		From Theorem~\ref{t4.2}, the characteristic polynomial of the adjacency (resp. the Laplacian, the normalized Laplacian) matrix of the weighted hypergraph $G \underset{T}{\bigvee}G$ is  
		\begin{align}\label{eq}
			\displaystyle\underset{t=1}{\overset{n}{\prod}} \left(x^2-2x[\lambda_t(M(G)+\theta\beta I_n+\theta'\gamma J_n)]\right.+\left.(\lambda_t(M(G)+\theta\beta I_n+\theta'\gamma J_n))^2 -(\lambda_{t}(\delta aI_n+\delta bJ_n))^2\right),
		\end{align}
	where  $\theta(=\theta_1=\theta_2),\theta'(=\theta_1'=\theta_2'),r(=r_1=r_2),z'(=z_1'=z_2')$ are as given in Theorem~\ref{t4.2}
		Since $M(G)$ is a real symmetric matrix of order $n$ with the row sum $c$, there exists an orthogonal basis of $\mathbb{R}^{n}$ consisting of its eigenvectors, including the all-one vector $J_{n\times 1}$ corresponds to the eigenvalue $c$. Let us denote the eigenvectors of $M(G)$ by $X_{1}(=J_{n\times 1})
		,X_{2},\dots,X_{n}$ corresponding to the eigenvalues $\mu_{1}(=c), \mu_{2},\dots,\mu_{n}$. 
		
		Notice that,
		$ \lambda_1(M(G)+\theta\beta I_n+\theta'\gamma J_n)=c+\theta\beta+n\theta'\gamma$ and $\lambda_{1}(\delta aI_n+\delta bJ_n)=\delta (a+nb)$ are the co-eigenvalues corresponding to the common eigenvector $X_1$.
		
		For $i=2,\dots,n$, $\lambda_i(M(G)+\theta\beta I_n+\theta'\gamma J_n)=\mu_i+\theta\beta$ and $\lambda_{i}(\delta aI_n+\delta bJ_n)=\delta a$ are the co-eigenvalues corresponding to the common eigenvector $X_i$. Thus from equation~\eqref{eq}, we have
		\begin{align*}
			\displaystyle\left(x^2-2x(c+\theta\beta+n\theta'\gamma)+(c+\theta\beta+n\theta'\gamma)^2 -\delta^2 (a+nb)^2\right)\\\times\underset{i=2}{\overset{n}{\prod}} \left(x^2-2x(\mu_i+\theta\beta)+(\mu_i+\theta\beta)^2 -\delta^2 a^2\right).
		\end{align*}	
		This completes the proof.
	\end{proof}
	\begin{thm}
		Let $G_i(V_i,E_i,W_i), i=1,2,\dots,k$ be $k(>1)$ copies of a weighted $r'$-regular hypergraph $H$ with $|V(H)|=n$. Then the characteristic polynomial of the adjacency (resp. the Laplacian, the normalized Laplacian) matrix of the weighted hypergraphs given in Table~\ref{Tabb} is
		\begin{align}\label{eq4.1}
			&\displaystyle\underset{t=1}{\overset{n}{\prod}}\left[k\left(x-\lambda_t(\alpha A(H)+\beta I_n+\gamma J_n)+\lambda_t(aI_n+bJ_n)\right)^k\right. \nonumber\\		
			& \left.\qquad\qquad-\lambda_t(aI_n+bJ_n)(x-\lambda_t(\alpha A(H)+\beta I_n+\gamma J_n)+\lambda_t(aI_n+bJ_n))^{k-1}\right],
		\end{align}
		where the values $\alpha, \beta, \gamma, a, b$ are given in Table~\ref{tab} and for $t=1,2,\dots,n$, $\lambda_t(\alpha A(H)+\beta I_n+\gamma J_n)$, $\lambda_t(aI_n+bJ_n)$ are the co-eigenvalues of the matrices $\alpha A(H)+\beta I_n+\gamma J_n$, $aI_n+bJ_n$, respectively. Let $z=r'\alpha+\beta+n\gamma+(k-1)a+k(n-1)b$, where $\alpha, \beta, \gamma, a, b$ are taken corresponding to the matrix of the respective graphs given in Table~\ref{tab}.\\
		Also, for $X=\{2,3,\dots, ln\}$, $r\in\{1,2,\dots, n\}$, let
		
		 $p_1'=\displaystyle\left(^{k-1}_{l-1}\right)\underset{c\in X}{ \sum}\frac{w_c\cdot p_1^{(c)}}{c-1}$,~~~~ $p_2'=\displaystyle\left(^{k-2}_{l-2}\right)\underset{c\in X}{ \sum}\frac{w_c\cdot p_2^{(c)}}{c-1}$,  
		 
		 	$p_{21}=\frac{w_{lr}}{lr-1}\left(^{k-2}_{l-2}\right)\left(^{n-1}_{r-1}\right)$,~~~~	$ p_{22}=			
		\begin{cases}
			0 &\text{if}~    
			r=1; \\
			\frac{w_{lr}}{lr-1}\left(^{k-2}_{l-2}\right)\left(^{n-2}_{r-2}\right)& \text{otherwise},
		\end{cases}$ \\and~
		$ p_{12} =			
		\begin{cases}
			0 &\text{if}~    
			r=1;  \\
			\frac{w_{lr}}{lr-1}\left(^{k-1}_{l-1}\right)\left(^{n-2}_{r-2}\right)& \text{otherwise}.
		\end{cases} $	
		\setlength\extrarowheight{1pt}
		\renewcommand{\arraystretch}{1.0}
		\tabcolsep=0.11cm
		\begin{longtable}{ | M{4.5cm} | M{3.9cm} |M{1cm} |M{1.9cm} |M{1.5cm} |M{1.5cm} |M{1.5cm} | }
			\hline
			\textbf{Name of the hypergraph}  &  \textbf{~~Name of the matrix}  & $\alpha$ & $\beta$ & $\gamma$ & $a$ & $b$ \\
			\hline
			
			& Adjacency matrix & $1$ & $-p_{12}$ & $p_{12}$ & $p_{21}-p_{22}$ &$p_{22}$ \\	
			\cline{2- 7}
			
			$(l,r)$-mirror hypergraph of $H$ & Laplacian matrix & $-1$ & $z+p_{12}$ & $-p_{12}$ & $p_{22}-p_{21}$ & $-p_{22}$ \\
			\cline{2- 7}
			
			& normalized Laplacian matrix & $-\frac{1}{z}$ & $1+\frac{p_{12}}{z}$ & $-\frac{p_{12}}{z}$ & $\frac{p_{22}-p_{21}}{z}$ & $\frac{-p_{22}}{z}$ \\
			\hline
			
			%
			%
			
			& Adjacency matrix & $1$ & $-p_{1}'$ & $p_{1}'$ & $0$ & $p_2'$ \\
			\cline{2- 7}
			Join $l$-neighbourhood hypergraph of $H$  & Laplacian matrix & $-1$ & $z+p_1'$ & $-p_1'$ & $0$& $-p_2'$ \\
			\cline{2- 7}
			& normalized Laplacian matrix & $-\frac{1}{z}$ & $1+\frac{p_{1}'}{z}$ & $-\frac{p_1'}{z}$ & $0$ & $\frac{-p_2'}{z}$ \\
			\hline
			& Adjacency matrix  & $1$ & $p_{12}-p_{1}'$ & $p_{1}'-p_{12}$ & $p_{22}-p_{21}$ & $p_2'-p_{22}$ \\
			\cline{2- 7}
			$VC$-$(l,r)$-neighbourhood hypergraph of $H$& Laplacian matrix & $-1$ & $z+p_1'-p_{12}$ & $p_{12}-p_1'$ & $p_{21}-p_{22}$ & $p_{22}-p_2'$ \\
			\cline{2- 7}		
			& normalized Laplacian matrix & $-\frac{1}{z}$ & $1+\frac{p_{1}'-p_{12}}{z}$ & $\frac{p_{12}-p_1'}{z}$ & $\frac{p_{21}-p_{22}}{z}$ & $\frac{p_{22}-p_2'}{z}$ \\
			\hline
			\caption{Necessary values for determining the spectrum of the matrices associated with the hypergraphs given in Table~\ref{Tabb}}\label{tab}	\end{longtable}
		
	\end{thm}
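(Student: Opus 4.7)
The plan is to follow the same template as Theorem~\ref{t4.2}, exploiting the extra symmetry coming from the fact that all $k$ constituents are identical copies of the $r'$-regular hypergraph $H$, and that each tensor $T^*[\mathcal{V}_S]$ in $\mathcal{T}^*$ depends only on $|S|=l$. First I will write the adjacency (respectively Laplacian, normalized Laplacian) matrix $\mathcal{A}$ of the $\mathcal{T}^*$-join as the $k\times k$ block matrix of order $kn$ whose diagonal blocks all equal a common matrix $P = \alpha A(H) + \beta I_n + \gamma J_n$, and whose off-diagonal blocks all equal a common matrix $Q = aI_n + bJ_n$; equivalently, $\mathcal{A} = I_k \otimes P + (J_k - I_k)\otimes Q$. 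The parameters $\alpha, \beta, \gamma, a, b$ of Table~\ref{tab} are read off by counting, for each of the three tensors $_rT[\mathcal{V}_S]$, $J[\mathcal{V}_S]$ and $\Im_r[\mathcal{V}_S]$, the $\frac{w_c}{c-1}$-weighted number of new edges of size $c$ containing a prescribed pair of vertices, split according to whether the two vertices lie in the same copy (contributing to $\gamma$) or in different copies (contributing to $b$), with the corresponding diagonal corrections $\beta$ and $a$ coming from the valencies; for the Laplacian and normalized Laplacian versions these are then adjusted by the scalars of Table~\ref{Utab}.

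For $J[\mathcal{V}_S]$ the counts reduce to the compositions $p_1^{(c)}$ and $p_2^{(c)}$ of (\ref{p1})--(\ref{p2}), summed over those $l$-subsets $S \subseteq [k]$ that contain the relevant copies, which introduces the binomial factors $\binom{k-1}{l-1}$ and $\binom{k-2}{l-2}$ appearing in $p_1', p_2', p_{12}, p_{21}, p_{22}$; for $_rT[\mathcal{V}_S]$ they reduce to $x_1, x_2$; and for $\Im_r = J - {_rT}$ the counts follow by subtraction. Once Table~\ref{tab} is justified, I observe that $P$ and $Q$ commute: both are polynomials in $A(H), I_n, J_n$, and since $H$ is $r'$-regular we have $A(H) J_n = J_n A(H) = r' J_n$, so these three generators pairwise commute, and hence so do all blocks of $\mathcal{A}$. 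This places us in the hypotheses of Theorem~\ref{t1.3}.

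To conclude, I choose an orthonormal eigenbasis $X_1 = \mathbf{1}_n/\sqrt{n}, X_2, \dots, X_n$ of $A(H)$, which is automatically a common eigenbasis for $I_n$ and $J_n$ since $X_1$ spans the range of $J_n$ while $X_2, \dots, X_n$ lie in its kernel. For each $t$ the co-eigenvalues $\lambda_t(P) = \lambda_t(\alpha A(H) + \beta I_n + \gamma J_n)$ and $\lambda_t(Q) = \lambda_t(aI_n + bJ_n)$ produce the $k\times k$ matrix $E_t = \lambda_t(P)\, I_k + \lambda_t(Q)(J_k - I_k) = (\lambda_t(P) - \lambda_t(Q))\, I_k + \lambda_t(Q)\, J_k$, whose characteristic polynomial is computed in one line from the spectrum $\{k, 0, \dots, 0\}$ of $J_k$, or equivalently by a single application of Theorem~\ref{sylvester}. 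Assembling the product of $\det(xI_k - E_t)$ over $t=1,\dots,n$ yields the displayed formula. The main obstacle, as in Theorem~\ref{t4.2}, is the combinatorial bookkeeping needed to justify each entry of Table~\ref{tab} across the three tensor-join operations and the three matrices (nine cases in all), together with the careful handling of the valencies $z = r'\alpha + \beta + n\gamma + (k-1)a + k(n-1)b$ and the diagonal corrections in the Laplacian and normalized Laplacian cases; the spectral reduction itself is then mechanical.
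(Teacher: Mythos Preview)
Your proposal is correct and follows essentially the same route as the paper: write the matrix as $\mathcal{A}=I_k\otimes(\alpha A(H)+\beta I_n+\gamma J_n)+(J_k-I_k)\otimes(aI_n+bJ_n)$, invoke Theorem~\ref{t1.3} to reduce to the $k\times k$ matrices $E_t=(\lambda_t(P)-\lambda_t(Q))I_k+\lambda_t(Q)J_k$, and compute $\det(xI_k-E_t)$ via Theorem~\ref{sylvester}. Your write-up is in fact more explicit than the paper's about why the blocks commute and how the combinatorial parameters in Table~\ref{tab} arise, but the spectral argument is identical.
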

	\begin{proof}
		The adjacency (resp. The Laplacian, The normalized Laplacian) matrix of the hypergraphs given in Table \ref{tab} is of the form
		\begin{center}
			$\mathcal{A}=I_k\otimes(\alpha A(H)+\beta I_n+\gamma J_n)+(J_k-I_k)\otimes (a I_n+b J_n)$
		\end{center}  with the values $\alpha, \beta, \gamma ,a,b$ corresponding to the hypergraphs as given in Table~\ref{tab}.
		Let
		\begin{eqnarray}
			\mathcal{D}&=&I_{k}\otimes [\lambda_t(\alpha A(H)+\beta I_n+\gamma J_n)-\lambda_t(aI_n+bJ_n)];\nonumber\\ M_t &=&\mathcal{D} + [\lambda_t(aI_n+bJ_n)\times J_{k\times 1}\times J_{1\times k}].\nonumber\\
			\text{By Theorem}~\ref{sylvester},~~~~~~~~~\nonumber\\
			P_{M_t}(x)&=&det(xI_k-\mathcal{D}-\lambda_t(aI_n+bJ_n)J_{k\times 1}J_{1\times k}) \nonumber\\
			&=&det(xI_k-\mathcal{D}) det(1-\lambda_t(aI_n+bJ_n)J_{1\times k}(xI_k-\mathcal{D})^{-1}J_{k\times 1})\nonumber \\
			&=& P_{\mathcal{D}}(x) det\left(1-\frac{\lambda_{t}(aI_n+bJ_n)\cdot k}{x-\lambda_t(\alpha A(H)+\beta I_n+\gamma J_n)+\lambda_t(aI_n+bJ_n)}\right),\nonumber
		\end{eqnarray}
		where $P_{\mathcal{D}}(x)=\displaystyle \left(x-\lambda_t(\alpha A(H)+\beta I_n+\gamma J_n)+\lambda_t(aI_n+bJ_n)\right)^k.$ 
		Therefore,
		\begin{eqnarray}
			P_{M_t}(x)&=&k(x-\lambda_t(\alpha A(H)+\beta I_n+\gamma J_n)+\lambda_t(aI_n+bJ_n))^k\nonumber\\
			& &	-\lambda_t(aI_n+bJ_n)(x-\lambda_t(\alpha A(H)+\beta I_n+\gamma J_n)+\lambda_t(aI_n+bJ_n))^{k-1}\nonumber.
		\end{eqnarray}
		Applying Theorem~\ref{t1.3}, we have $P_\mathcal{A}(x)=\displaystyle\underset{t=1}{\overset{n}{\prod}}P_{M_t}(x)$, as desired.
	\end{proof}
	\subsection{Spectra of the $(H,\mathcal{T})$-join of hypergraphs}
	Let $H$ be a hypergraph and let $\mathcal{G}=(G_i(V_i,E_i,W_i))_{i=1}^{k}$ be a sequence of weighted hypergraphs. Let $E$ be the edge set of the hypergraph $\mathcal{G}(H,\mathcal{T})$. We define a weight function $W:E\rightarrow\mathbb{R}_{\geq 0}$ as follows:
	\begin{align}\label{hwt}
		W(e')=\begin{cases}
			W_i(e')&\text{if}~ e'\in E_i;\\
			w_{|e|}&\text{if}~~ e'\in E(T[\mathcal{V}_e]),
		\end{cases}
	\end{align} where  $\mathcal{V}_e=(V_i)_{i\in e}$ for each $e\in E(H)$.
	We denote the hypergraph $\mathcal{G}(H,\mathcal{T})$ together with a weight function $W$ given in~\eqref{hwt} by $\mathcal{G}(H,\mathcal{T},W)$.
	
	Throughout this subsection, we consider a weight function as defined above for any $(H,\mathcal{T})$-join of hypergraphs in $\mathcal{G}$.
	\begin{thm}\label{t3.2}
		Let $H$ be a hypergraph on $k$ vertices. Let $\mathcal{G}=(G_i(V_i,E_i,W_i))_{i=1}^k$ be a sequence of $r_i$-regular weighted hypergraphs $G_i$ with $|V_i|=n_i$ and let $X=\{2,3,\dots,N\}.$ For each $e\in E(H)$, let $\mathcal{V}_e=(V_i)_{i\in e}$ and let $\mathcal{T}=\{T[\mathcal{V}_e]~|~e\in E(H)\}$ be such that for each $p\in V_i,~ q\in V_j$ and $c\in X$, $|E_{p,q}^c(T[\mathcal{V}_e])|$ is a constant, say $n_{ij}^{c}(e)$ for all $i,j\in e$ and $1\leq i\leq j\leq k$. 
		Then the characteristic polynomial of the adjacency (resp. the Laplacian, the normalized Laplacian) matrix of the weighted hypergraph
		$\mathcal{G}(H,\mathcal{T},W)$ is
		\begin{center}
			$\underset{i=1}{\overset{k}{\prod}}~\underset{j=1;j\neq i}{\overset{n_i}{\prod}}(x-\alpha_i\lambda_{ij}-\beta_i)\times P_R(x)$,
		\end{center} where $\lambda_{ij}$ is a non-Perron adjacency eigenvalue of $G_i$ for all $j=1,2,\dots,n_i,~ i=1,2,\dots,k$ and
			\begin{center}
			$R=\begin{bmatrix}
			r_1\alpha_1+\beta_1+n_1\gamma_1 & n_2\delta_{12}\Delta_{12} &\cdots & n_k	\delta_{1k}\Delta_{1k}\\
			n_1\delta_{12}\Delta_{12} &r_2\alpha_2+\beta_2+n_2\gamma_2  & \cdots & n_k\delta_{2k}\Delta_{2k}\\
			\vdots & \vdots & \ddots& \vdots  \\
			n_1	\delta_{1k}\Delta_{1k}& n_2\delta_{2k}\Delta_{2k} & \cdots &   r_k\alpha_k+\beta_k+n_k\gamma_k
			\end{bmatrix}$
		\end{center}
		where, $\Delta_{ij}=\displaystyle\underset{c\in X}{ \sum}~\displaystyle\underset{\underset{e\in E(H)}{i,j\in e,}}{\sum} \frac{w_c\cdot n_{ij}^{c}(e)}{c-1}$ and the values $\alpha_i, \beta_i, \gamma_i$, $\delta_{ij}$ can be computed using Table~\ref{values table1} by taking $n_{ij}^{(c)}=\displaystyle\underset{\underset{e\in E(H)}{i,j\in e,}}{\sum}{~~~n_{ij}^c(e)}$ for all $1\leq i\leq j\leq k$ in Theorem~\ref{t2.2}.
	\end{thm}
	\begin{proof}
		As in Theorem~\ref{r3.1}, $(H,\mathcal{T})$-join of hypergraphs in $\mathcal{G}$ can be viewed as a $T[\mathcal{V}]$-join of hypergraphs in $\mathcal{G}$ for some suitable indicating tensor $T[\mathcal{V}]$, where $\mathcal{V}=(V_i)_{i=1}^k$. Since, $p\in V_i,~ q\in V_j$ and $c\in X$, $|E_{p,q}^c(T[\mathcal{V}_e])|=n_{ij}^{c}(e)$, for all $i,j\in e$ and $1\leq i\leq j\leq k$, we have $n_{ij}^{(c)}$ is a constant and is equal to $\underset{\underset{e\in E(H)}{i,j\in e,}}{\sum}{~~~n_{ij}^c(e)}$. Thus the proof follows from Theorem~\ref{t2.2}.
	\end{proof}
	In the following corollary, we construct infinite families of cospectral hypergraphs by using the $(\mathcal{H},\mathcal{T})$-join operation on hypergraphs.
	\begin{cor}\normalfont
		Let $H$ be a hypergraph on $k$ vertices and let $G_i(V_i,E_i,W_i)$, $G_i'(V_i',E_i',W_i')$ be  $A-$cospectral $r_i$-regular weighted hypergraphs
		for $i=1,2,\dots,k$. Let $\mathcal{G}=(G_i)_{i=1}^k$, $\mathcal{G}'=(G_i')_{i=1}^k$ and $X=\{2,3,\dots,N\}$.
		For each $e\in E(H)$, let $\mathcal{V}_e=(V_i)_{i\in e}$, $\mathcal{V}'_e=(V_i')_{i\in e}$. Let $\mathcal{T}=\{T[\mathcal{V}_e]~|~e\in E(H)\}$ be such that, for each $p\in V_i,~ q\in V_j$ and $c\in X$, $|E_{p,q}^c(T[\mathcal{V}_e])|$ is a constant, say $n_{ij}^{c}(e)$, for all $i,j\in e$ and $1\leq i\leq j\leq k$. Let $\mathcal{T}'=\{T[{\mathcal{V}'_e}]~|~e\in E(H)\}$, where $T[{\mathcal{V}'_e}]=T[\mathcal{V}_e]$. Then the hypergraphs	$\mathcal{G}(H,\mathcal{T},W)$ and	$\mathcal{G}'(H,\mathcal{T}',W)$ are simultaneously $A-$cospectral, $L-$cospectral and $\mathcal{L}-$cospectral.
	\end{cor}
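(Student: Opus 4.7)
The plan is to apply Theorem~\ref{t3.2} directly to both hypergraphs $\mathcal{G}(H,\mathcal{T},W)$ and $\mathcal{G}'(H,\mathcal{T}',W)$ and compare the resulting characteristic polynomials term by term. The theorem expresses the characteristic polynomial of the adjacency (resp.\ Laplacian, normalized Laplacian) matrix as a product
\[
\prod_{i=1}^{k}\prod_{\substack{j=1\\ j\neq i}}^{n_i}(x-\alpha_i\lambda_{ij}-\beta_i)\times P_R(x),
\]
where the quantities $\alpha_i,\beta_i,\gamma_i,\delta_{ij}$ depend only on $r_i$, the counts $n_{ij}^{(c)}$, and the edge weights, and where the $\lambda_{ij}$ are the non-perron adjacency eigenvalues of $G_i$. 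Hence the polynomial is completely determined by the sequence of regularities $(r_i)_{i=1}^{k}$, the adjacency spectrum of each $G_i$, the cardinalities $|V_i|=n_i$, the counts $n_{ij}^{c}(e)$ read off from the tensors in $\mathcal{T}$, and the weights $w_c$.

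First I would verify that the hypotheses of Theorem~\ref{t3.2} apply to both hypergraphs. Since $G_i$ and $G_i'$ are both $r_i$-regular and the weight functions coincide on new edges (both use $w_{|e|}$ from~\eqref{hwt}), the values of $r_i$, $n_i$, $\alpha_i$, $\beta_i$, $\gamma_i$, $\delta_{ij}$ are identical for the two constructions. Next, because $T[\mathcal{V}'_e]=T[\mathcal{V}_e]$ for every $e\in E(H)$, the counts $n_{ij}^{c}(e)$ coincide for $\mathcal{T}$ and $\mathcal{T}'$, and therefore the aggregated counts $n_{ij}^{(c)}=\sum_{i,j\in e,\,e\in E(H)}n_{ij}^{c}(e)$ are the same. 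Consequently, the matrix $R$ produced by Theorem~\ref{t3.2} is the same for both hypergraphs, and so is $P_R(x)$.

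Finally, since $G_i$ and $G_i'$ are $A$-cospectral, they share the same multiset of adjacency eigenvalues. Both being $r_i$-regular, the perron eigenvalue is $r_i$ in each case, so the multisets of non-perron eigenvalues $\{\lambda_{ij}\mid j=1,\dots,n_i\}$ also agree. Therefore the product
\[
\prod_{i=1}^{k}\prod_{\substack{j=1\\ j\neq i}}^{n_i}(x-\alpha_i\lambda_{ij}-\beta_i)
\]
is identical for the two hypergraphs. Combining both conclusions shows the $A$-characteristic polynomials of $\mathcal{G}(H,\mathcal{T},W)$ and $\mathcal{G}'(H,\mathcal{T}',W)$ coincide, and the same argument, with the $\alpha_i,\beta_i,\gamma_i,\delta_{ij}$ read from the respective rows of Table~\ref{values table1}, yields $L$- and $\mathcal{L}$-cospectrality.

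There is essentially no obstacle in this proof; the only mildly delicate point is observing that the $\mathcal{L}$-case still works because $z_i$ depends only on $r_i$, the $n_j$'s, and the aggregated counts $n_{ij}^{(c)}$, all of which are invariant under the replacement $G_i\rightsquigarrow G_i'$, so the diagonal scaling $D(H)^{-1/2}$ that enters $\mathcal{L}$ is the same for both hypergraphs. Once this is noted, the result is an immediate corollary of Theorem~\ref{t3.2}.
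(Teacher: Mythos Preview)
Your proof is correct and follows essentially the same approach as the paper's own proof: both apply Theorem~\ref{t3.2} to each of the two hypergraphs, observe that the matrix $R$ (and hence $P_R(x)$) coincides because the parameters $\alpha_i,\beta_i,\gamma_i,\delta_{ij}$ depend only on the $r_i$'s and the tensor data, and conclude by using the $A$-cospectrality of $G_i$ and $G_i'$ to match the remaining factors. Your write-up is simply more explicit, in particular in justifying the $\mathcal{L}$-case via the invariance of $z_i$.
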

	\begin{proof}
		Since $G_i$ and $G_i'$ are $r_i$ regular and the values $\alpha_i$, $\beta_i$, $\gamma_i$, $\delta_i$ depend only upon the indicating tensor $T[\mathcal{V}_e]$, from Theorem~\ref{t3.2}, the matrix $R$ is the same for the hypergraphs $\mathcal{G}(H,\mathcal{T},W)$ and	$\mathcal{G}'(H,\mathcal{T}',W)$. Since $G_i$ and $G_i'$ have the same $A$-spectrum, the result follows.
	\end{proof}
Now we proceed to obtain various spectrum of the hypergraphs given in Table~\ref{Tab4} by viewing them as a $(H,\mathcal{T})$-join of hypergraphs. In the following corollary, we deduce some results on the spectra of hypergraphs in the literature.
	\begin{cor}(\cite[Theorems 3.2.1,  3.1.1]{sarkar2020joins})
		\begin{enumerate}
			\item[(i)] 	Let $H$ be a hypergraph with $|V(H)|=k$ and $\mathcal{G}=(G_i(V_i,E_i,W_i))_{i=1}^k$ be a sequence of $r_i$-regular weighted hypergraphs $G_i.$ Then the characteristic polynomial of the adjacency matrix of the weighted join of set $\mathcal{G}$ of non-uniform hypergraphs on a backbone hypergraph $H$ given in Table~\ref{Tab4} is obtained from Theorem~\ref{t2.2} by taking the values of $z_i, \alpha_i, \beta_i, \gamma_i$ and $\delta_{ij}$ as given in Theorem~\ref{t2.2} and taking \begin{align}\label{eqn 3.3}
				n_{ij}^{(c)}=\begin{cases}
					\displaystyle\underset{\underset{e\in E(H)}{i,j\in e}}{\sum}n_{ij}^{c}(S_e)& \text{if}~ c\in B_e;\\
					0& \text{otherwise.}
				\end{cases}
			\end{align}
			\item [(ii)]Let $H$ be a hypergraph with $|V(H)|=k$ and let $\mathcal{G}=(G_i(V_i,E_i,W_i))_{i=1}^k$ be a sequence of $r_i$-regular $m$-uniform weighted hypergraphs $G_i(V_i,E_i,W_i).$		
			Then the characteristic polynomial of the adjacency matrix of the weighted join of set $\mathcal{G}$ of $m$-uniform hypergraphs on a backbone hypergraph $\mathcal{H}$ given in Table~\ref{Tab4} is obtained from Corollary~\ref{c2.1} by taking the values of $z_i, \alpha_i, \beta_i, \gamma_i$ and $\delta_{ij}$ as in Corollary~\ref{c2.1} and the value of $n_{ij}^{(c)}$ as given in~\eqref{eqn 3.3} with $c=m$.
		\end{enumerate}	
	\end{cor}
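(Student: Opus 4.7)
The plan is to recognize each of the join operations in the statement as a special case of the $(H,\mathcal{T})$-join and then to invoke Theorem~\ref{t3.2} directly, reducing the task to identifying the correct data $(r_i, \alpha_i, \beta_i, \gamma_i, \delta_{ij}, n_{ij}^{(c)})$ that feeds into the formula.

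For part (i), I would first appeal to S.No.~2 of Table~\ref{Tab4}, which tells us that the weighted join of a set $\mathcal{G}$ of non-uniform hypergraphs on a backbone hypergraph $H$ is exactly the $(H,\mathcal{T})$-join where $\mathcal{T}=\{{}_{B_e}T[\mathcal{V}_e]\mid e\in E(H)\}$ for suitable $B_e\subseteq\{|e|,|e|+1,\dots,N_e\}$. The hypothesis of Theorem~\ref{t3.2} requires, for every edge $e\in E(H)$, every pair $p\in V_i,q\in V_j$ with $i,j\in e$, and every cardinality $c\in X$, that $|E^{c}_{p,q}({}_{B_e}T[\mathcal{V}_e])|$ be a constant $n_{ij}^{c}(e)$. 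This is a direct counting: if $c\in B_e$, then the edges of cardinality $c$ in the $_{B_e}T$-join that contain both $p$ and $q$ are exactly the $c$-subsets of $\bigcup_{i\in e}V_i$ meeting every block $V_t$ ($t\in e$) non-trivially and containing $\{p,q\}$, and the count of such subsets depends only on $(i,j,c,e)$ and the block sizes, not on the particular vertices $p,q$. This constant is precisely $n_{ij}^c(S_e)$ in the notation preceding Corollary~4.1 (with $S_e=\{V_i\mid i\in e\}$), and is $0$ for $c\notin B_e$. Summing over all edges $e\in E(H)$ containing both $i$ and $j$ yields $n_{ij}^{(c)}=\sum_{e\in E(H):\,i,j\in e}n_{ij}^{c}(S_e)$ when $c\in B_e$ and $0$ otherwise, which is exactly~\eqref{eqn 3.3}. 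With these values inserted, Theorem~\ref{t3.2} collapses to the desired formula, and substituting $\alpha_i,\beta_i,\gamma_i,\delta_{ij}$ from Table~\ref{values table1} yields the adjacency characteristic polynomial.

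For part (ii), the argument is the same but with $B_e=\{m\}$ for every $e\in E(H)$, corresponding to S.No.~1 of Table~\ref{Tab4}. The only nontrivial cardinality contributing is $c=m$, so Theorem~\ref{t3.2} degenerates to the uniform version already packaged as Corollary~\ref{c2.1}. The counting constant $n_{ij}^{(m)}$ is computed in the same way and agrees with~\eqref{eqn 3.3} specialized to $c=m$.

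The only potentially delicate step is the combinatorial identification: checking that the indicating tensors ${}_{B_e}T[\mathcal{V}_e]$ arising from Table~\ref{Tab4} really do satisfy the vertex-transitivity condition $|E^{c}_{p,q}|=\text{const}$ independent of the choice of $p\in V_i,q\in V_j$. This follows because ${}_{B_e}T[\mathcal{V}_e]$ is defined purely in terms of cardinality and intersection patterns with the blocks $V_i$, so it is invariant under permutations of vertices within each block; thus $|E^{c}_{p,q}|$ is preserved under any such permutation and the count is a function of $(i,j,c,e)$ alone. Once this symmetry is in place, the rest is routine bookkeeping matching the notation of the corollary to that of Theorem~\ref{t3.2} and Corollary~\ref{c2.1}.
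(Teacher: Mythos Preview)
Your proposal is correct and matches the paper's implicit approach: the paper states this corollary without proof, positioning it immediately after Theorem~\ref{t3.2} (whose own proof reduces the $(H,\mathcal{T})$-join to Theorem~\ref{t2.2} via the identification $n_{ij}^{(c)}=\sum_{e\ni i,j} n_{ij}^c(e)$), so your recognition of the backbone join as the $(H,\mathcal{T})$-join with $\mathcal{T}=\{{}_{B_e}T[\mathcal{V}_e]\}$ from Table~\ref{Tab4} and your symmetry argument for the constancy of $|E_{p,q}^c|$ simply make explicit what the paper leaves implicit.
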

	\begin{cor}
		The characteristic polynomial of the Laplacian matrix and the normalized Laplacian matrix of the weighted join of set $\mathcal{G}$ of weighted non-uniform hypergraphs on a backbone hypergraph $\mathcal{H}$ given in S.No.$1$ of Table~\ref{Tab4} with the weight function given in~\eqref{hwt} can be obtained from Theorem~$\ref{t2.2}$ by taking the values $z_i, \alpha_i, \beta_i, \gamma_i$ and $\delta_{ij}$ corresponds to the Laplacian, the normalized Laplacian matrices given in Theorem~\ref{t2.2} and the value $n_{ij}^{(c)}$ as given in~\eqref{eqn 3.3}.
	\end{cor}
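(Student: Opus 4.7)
The plan is to derive the Laplacian and normalized Laplacian characteristic polynomials along the same route used in the preceding corollary for the adjacency case: view the weighted join of $\mathcal{G}$ on the backbone $H$ as a tensor join via the equivalence established in Theorem~\ref{r3.1}, verify that the hypotheses of Theorem~\ref{t2.2} (equivalently, of Theorem~\ref{t3.2}) hold, and then substitute the Laplacian and normalized Laplacian rows of Table~\ref{values table1} in place of the adjacency row.

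First I would fix, for each backbone edge $e\in E(H)$, the indicating tensor $T[\mathcal{V}_e]$ whose non-zero entries correspond to subsets of $\bigcup_{i\in e}V_i$ whose cardinality lies in $B_e$ and which meet every block $V_i$ with $i\in e$. A direct binomial count, already encoded in the notation $n_{ij}^{c}(S_e)$, gives $|E_{p,q}^{c}(T[\mathcal{V}_e])|=n_{ij}^{c}(S_e)$ for every $p\in V_i$, $q\in V_j$ and every $c\in B_e$, and zero when $c\notin B_e$. Aggregating over all edges of the backbone that contain both $i$ and $j$ yields the global constant
\[
n_{ij}^{(c)}=\sum_{\substack{e\in E(H)\\ i,j\in e}} n_{ij}^{c}(S_e),
\]
which is precisely~\eqref{eqn 3.3} and verifies the uniformity hypothesis of Theorem~\ref{t2.2}. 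The resulting symmetric block matrix has $(i,i)$-block $\alpha_i A(G_i)+\beta_i I_{n_i}+\gamma_i J_{n_i}$, with constant row sum $r_i\alpha_i+\beta_i+n_i\gamma_i$ by the $r_i$-regularity of $G_i$, and $(i,j)$-block $\delta_{ij}\sum_{c\in X}\frac{w_c\, n_{ij}^{(c)}}{c-1}J_{n_i\times n_j}$ for $i\neq j$; Theorem~\ref{m thm} then produces the announced factorization into the non-perron contributions and the quotient matrix $R$.

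The main obstacle is really only bookkeeping, since the statement is a specialization of Theorem~\ref{t2.2}. The one place where some care is needed is the normalized Laplacian case, where one must check that the valency $z_i$ governing the scaling $1/\sqrt{z_iz_j}$ is well-defined, i.e.\ constant over all vertices $v\in V_i$ in the resulting weighted hypergraph. This is automatic from the construction: $n_{ij}^{(c)}$ depends only on the blocks $V_i$ and $V_j$ and not on the choice of representative $p\in V_i$, so $d(v)$ is constant on each $V_i$, and the formula for $z_i$ supplied by Theorem~\ref{t2.2} applies verbatim with the $n_{ij}^{(c)}$ given by~\eqref{eqn 3.3}.
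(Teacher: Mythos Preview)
Your proposal is correct and aligns with the paper's approach: the paper gives no explicit proof of this corollary, treating it as an immediate specialization of Theorem~\ref{t2.2} once the counts $n_{ij}^{(c)}$ have been identified in~\eqref{eqn 3.3} (the adjacency case in the preceding corollary). Your write-up simply makes that specialization explicit, including the valency check for the normalized Laplacian, so there is nothing to add or correct.
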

	\begin{cor}
		The characteristic polynomial of the Laplacian matrix, the normalized Laplacian matrix of the weighted join of set $\mathcal{G}$ of weighted $m$-uniform hypergraphs on a backbone hypergraph $\mathcal{H}$ given in S.No.$2$ of Table~\ref{Tab4} with the weight function given in~\eqref{hwt} can be obtained from Corollary~\ref{c2.1} by taking the values $z_i, \alpha_i, \beta_i, \gamma_i$ and $\delta_{ij}$ corresponds to the Laplacian, the normalized Laplacian matrices given in Corollary~\ref{c2.1} and the value $n_{ij}^{(c)}$ as given in~\eqref{eqn 3.3} with $c=m$.
	\end{cor}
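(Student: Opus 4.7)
The plan is to invoke Theorem~\ref{t3.2} for the Laplacian and normalized Laplacian cases and then specialize to the $m$-uniform setting, mirroring how the adjacency version was deduced in the previous corollary. By Theorem~\ref{r3.1}, the weighted join of a set $\mathcal{G}$ of $m$-uniform hypergraphs on the backbone $\mathcal{H}$ (Table~\ref{Tab4}) is a $(H,\mathcal{T})$-join in which each indicating tensor $T[\mathcal{V}_e]={}_{\{m\}}T[\mathcal{V}_e]$ produces only hyperedges of cardinality $m$, all assigned weight $w_m$ under~(\ref{hwt}).

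First I would apply Theorem~\ref{t3.2} directly. It delivers the characteristic polynomial of the Laplacian (resp. normalized Laplacian) matrix of $\mathcal{G}(H,\mathcal{T},W)$ as
\[
\underset{i=1}{\overset{k}{\prod}}~\underset{j=1;j\neq i}{\overset{n_i}{\prod}}(x-\alpha_i\lambda_{ij}-\beta_i)\cdot P_R(x),
\]
with $\alpha_i,\beta_i,\gamma_i,\delta_{ij},z_i$ read off from the Laplacian or normalized Laplacian row of Table~\ref{values table1} and with aggregated counts $n_{ij}^{(c)}=\displaystyle\underset{\underset{e\in E(H)}{i,j\in e}}{\sum} n_{ij}^c(e)$ summed over incident edges of $H$.

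Next I would specialize $X=\{m\}$. Because every hyperedge produced by ${}_{\{m\}}T[\mathcal{V}_e]$ has cardinality exactly $m$, the local counts $n_{ij}^c(e)$ vanish for $c\neq m$; hence $n_{ij}^{(c)}=0$ outside $c=m$, and each sum $\sum_{c\in X}\frac{w_c\cdot n_{ij}^{(c)}}{c-1}$ appearing inside the matrix $R$ and in $z_i$ collapses to the single term $\frac{w_m\cdot n_{ij}^{(m)}}{m-1}$. Substituting this collapse converts the Laplacian/normalized Laplacian row of Table~\ref{values table1} into the corresponding row of Table~\ref{values table2}, and the matrix $R$ of Theorem~\ref{t3.2} into the matrix $R$ of Corollary~\ref{c2.1}, now populated by $n_{ij}^{(m)}=\displaystyle\underset{\underset{e\in E(H)}{i,j\in e}}{\sum} n_{ij}^m(S_e)$, which is exactly~(\ref{eqn 3.3}) at $c=m$.

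I do not anticipate any genuine obstacle: the statement is a bookkeeping corollary obtained by running the adjacency derivation already recorded through Theorem~\ref{t3.2} and Corollary~\ref{c2.1}, using $m$-uniformity to collapse the sum over $c\in X$. The only minor verification is that the constancy hypothesis of Theorem~\ref{t3.2} on $|E_{p,q}^c(T[\mathcal{V}_e])|$ for all $c\in X$ is satisfied vacuously when $c\neq m$ and holds by assumption at $c=m$; this is immediate from the $m$-uniformity of each $G_i$ and of every ${}_{\{m\}}T[\mathcal{V}_e]$.
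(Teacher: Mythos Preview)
Your proposal is correct and matches the paper's intent. The paper states this corollary without proof, as an immediate specialization; your route through Theorem~\ref{t3.2} followed by the collapse $X=\{m\}$ to land in the form of Corollary~\ref{c2.1} is exactly the bookkeeping the statement records, and your check of the constancy hypothesis at $c\neq m$ is the only nontrivial point, which you handle correctly.
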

	\begin{notation}
		For $1\leq i\leq j\leq k$, let \\
		$q_{ij}^{(c)}=\begin{cases}
			\displaystyle\underset{\underset{|\{p_1,p_2,\dots,p_{c-2}\}|=c-2}{\{p_1,p_2,\dots,p_{c-2}\}\subseteq \{1,2,\dots,k\}\backslash\{i,j\},}}{\sum}{n_{p_1}n_{p_2}\dots n_{p_{c-2}}}& \text{if}~~ i\neq j;\\
			0& \text{otherwise}.
		\end{cases}$
	\end{notation}
	\begin{cor}\label{c3.4}\normalfont
		\begin{itemize}
			\item[(i)] 	The characteristic polynomials of the adjacency, the Laplacian, the normalized Laplacian matrices of the weighted complete $m$-uniform strong $k$-partite hypergraph and weighted complete strong $k$-partite hypergraph mentioned in Table~\ref{Tab4} 
			are derived from Theorem~\ref{t2.2} by using the necessary values given in Table~\ref{H gen table} and $\alpha_i$, $\beta_i$, $\gamma_i$, $\delta_i$ are taken as given in Theorem~\ref{t2.2}. 
			\item[(ii)]  The characteristic polynomials of the adjacency, the Laplacian, the normalized Laplacian matrices of the weighted lexicographic product of a hypergraph $H$ and a $r$-regular weighted hypergraph $H'$ mentioned in Table~\ref{Tab4}
			are obtained from Theorem~\ref{t3.2} by taking the values given in S.No.$3$ of Table~\ref{H gen table}.
		\end{itemize}
		\setlength\extrarowheight{1pt}
		\renewcommand{\arraystretch}{1.0}
		\tabcolsep=0.11cm
		\begin{longtable}{ | M{1.3cm} | M{5.5cm} |M{6.5cm}| }
			\hline
			\textbf{S. No.}  & \textbf{Name of the hypergraph}  &
			\textbf{Values} \\ 
			\hline
			
			1. & Complete $m$-uniform strong $k$-partite hypergraph  & $X=\{m\};~~ n_{ij}^{(m)}=q_{ij}^{(m)}$  \\
			\hline
			
			2. & Complete strong $k$-partite hypergraph  & $X=\{2,3,\dots, k\};~~n_{ij}^{(c)}=q_{ij}^{(c)}$  \\
			\hline
			
			3. & Lexicographic product of $H(V,E)$ and $H'(V',E')$ & $r_i=r;~n_i=n;~\alpha_i=1,~\beta_i= \gamma_i=0;$ $X=\{|e|~|e\in E\};$ $n_{ij}^{c}(e)=
			\begin{cases}
				|V'|^{|e|-2}~~\text{if}~ i\neq j\\
				~~0~~~~\text{otherwise.}
			\end{cases}$  \\
			\hline
			\caption{Necessary values to compute the spectrum of the hypergraphs given in Table~\ref{Tab4}}\label{H gen table}
		\end{longtable}
	\end{cor}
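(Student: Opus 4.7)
The plan is to treat each of the three hypergraph families as a tensor join (or $(H,\mathcal{T})$-join) for which the hypotheses of Theorem~\ref{t2.2} or Theorem~\ref{t3.2} are already satisfied, and then merely record the combinatorial constants $n_{ij}^{(c)}$ that feed into those theorems. The only real work is a combinatorial identification of the constants; the structural inputs (regularity of each $G_i$, the fact that $|E_{p,q}^{c}(T[\mathcal{V}])|$ depends only on which parts $p$ and $q$ lie in) are guaranteed by the symmetry of the constructions.

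For part (i), I would first invoke the entries of Table~\ref{tab3} that already display the complete $m$-uniform strong $k$-partite hypergraph and the complete strong $k$-partite hypergraph as a $T[\mathcal{A}]$-join of the empty hypergraphs $G_i=K_{n_i}^c$. Thus $r_i=0$ and the hypotheses of Theorem~\ref{t2.2} reduce to verifying the constancy of $|E_{p,q}^{c}(T[\mathcal{V}])|$ for $p\in V_i$, $q\in V_j$. For the $m$-uniform strong $k$-partite case with $X=\{m\}$, an edge of size $m$ through a fixed $p\in V_i$ and $q\in V_j$ ($i\neq j$) must select exactly one vertex from each of $m-2$ further parts chosen from the remaining $k-2$ parts, so the count is
\[
\sum_{\{p_1,\dots,p_{m-2}\}\subseteq[k]\setminus\{i,j\}} n_{p_1}n_{p_2}\cdots n_{p_{m-2}} = q_{ij}^{(m)},
\]
and $q_{ii}^{(m)}=0$ because no strong-partite edge can contain two vertices from the same part. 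The non-uniform strong $k$-partite case is identical except $c$ ranges over $\{2,3,\dots,k\}$, producing $n_{ij}^{(c)}=q_{ij}^{(c)}$. Plugging these into Theorem~\ref{t2.2} with $\alpha_i,\beta_i,\gamma_i,\delta_{ij}$ taken from Table~\ref{values table1} yields the three characteristic polynomials.

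For part (ii), I would read the lexicographic product $H[H']$ off row $5$ of Table~\ref{Tab4} as the $(H,\mathcal{T})$-join of $k=|V(H)|$ copies of $H'$, where, for each edge $e\in E(H)$, the indicating tensor $T[\mathcal{V}_e]$ is supported on the $|e|$-element transversals of the copies $\{G_i : i\in e\}$. Hence $r_i=r$ for all $i$ (since every $G_i=H'$), $n_i=n=|V(H')|$, and $X=\{|e|:e\in E(H)\}$. A fixed pair $p\in V_i$, $q\in V_j$ with $i\neq j$ and $i,j\in e$ is contained in exactly $n^{|e|-2}$ edges of size $|e|$ arising from $e$ (choose any vertex from each of the remaining $|e|-2$ copies), while $n_{ii}^c(e)=0$. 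Substituting $n_{ij}^c(e)=n^{|e|-2}\,[i\neq j,\,i,j\in e]$ into Theorem~\ref{t3.2} and combining the inner edge-structure of $H'$ (captured by $A(G_i)$ with row sum $r$) with the transversal-structure contributed by the edges of $H$ produces the stated polynomial, with $\alpha_i=1$, $\beta_i=\gamma_i=0$ absorbing the fact that there are no intra-part loops produced by the tensor join itself.

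The main obstacle is strictly combinatorial: writing $n_{ij}^{(c)}$ correctly in the lexicographic case, where one must carefully distinguish the contribution of $E(G_i)$ (giving the $A(H')$ block of the adjacency matrix) from the contribution of $E(T[\mathcal{V}_e])$ summed over $e\in E(H)$ with $i,j\in e$. Once this bookkeeping is done and it is checked that $|E_{p,q}^{c}(T[\mathcal{V}_e])|$ depends only on the pair $(i,j)$ and not on the specific choice of $p,q$, everything else is a direct substitution into Theorem~\ref{t2.2} and Theorem~\ref{t3.2}, and the corollary follows without further analysis.
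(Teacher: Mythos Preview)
Your proposal is correct and is essentially the argument the paper leaves implicit: the corollary is stated in the paper without proof, and what you have written is precisely the combinatorial bookkeeping needed to feed Theorem~\ref{t2.2} and Theorem~\ref{t3.2} with the right constants. One small slip: the complete $m$-uniform strong $k$-partite hypergraph and the complete strong $k$-partite hypergraph are recorded in Table~\ref{Tab4} (as $(H,\mathcal{T})$-joins with $H=K_k^m$ or $H=K_k$ and $G_i=K_{n_i}^c$), not in Table~\ref{tab3}; Table~\ref{tab3} contains the \emph{weak} $k$-partite variants. This does not affect your mathematics, since by Theorem~\ref{r3.1} the $(H,\mathcal{T})$-join description converts to a $T[\mathcal{V}]$-join and your edge counts $q_{ij}^{(c)}$ and $n^{|e|-2}$ are exactly the aggregated $n_{ij}^{(c)}$ that Theorem~\ref{t2.2} requires.
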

	\begin{cor}
		If $H, H'$ are $k$-uniform hypergraphs with $|V|=n$, $|V'|=m$ and if $H'$ is weighted $r$-regular, then the characteristic polynomial of the adjacency matrix of the weighted lexicographic product of $H$ and $H'$ is 
		\begin{equation*}
			\displaystyle\underset{\lambda}{\prod}(x-\lambda)^{n}\underset{\mu}{\prod}(x-r-\mu m^{k-1}w_k),
		\end{equation*} where the products run over all the non-Perron eigenvalues $\lambda$ of $A(H')$ and all the eigenvalues $\mu$ of $A(H)$ respectively. The weight function considered in this lexicographic product is as given in~\eqref{hwt}.
	\end{cor}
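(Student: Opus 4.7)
The plan is to apply Corollary~\ref{c3.4}(ii) (specialized to the $k$-uniform case) and then simplify the resulting expression, recognizing the reduced matrix $R$ as a polynomial in $A(H)$.

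First I would observe that since $H$ is $k$-uniform, the parameter set $X=\{|e|\mid e\in E(H)\}$ reduces to the single value $X=\{k\}$, and the off-diagonal entries from S.No.~$3$ of Table~\ref{H gen table} become $n_{ij}^{k}(e)=m^{k-2}$ whenever $i\neq j$ and $i,j\in e$, while $n_{ii}^{(c)}=0$. Feeding this into the specialization in Corollary~\ref{c3.4}(ii), the adjacency matrix of the lexicographic product becomes block-structured with $n$ diagonal blocks equal to $A(H')$ (so $\alpha_i=1$, $\beta_i=\gamma_i=0$, $r_i=r$) and off-diagonal $(i,j)$-blocks equal to $\frac{w_k\, m^{k-2}}{k-1}\,d_{ij}(H)\, J_{m\times m}$, where $d_{ij}(H)$ denotes the number of edges of $H$ containing both $i$ and $j$.

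Next I would apply Theorem~\ref{t3.2}. The ``non-perron'' part of the spectrum contributes, for each $i\in\{1,\dots,n\}$, the factor $\prod_{\lambda}(x-\lambda)$ where $\lambda$ runs over the $m-1$ non-perron eigenvalues of $A(H')$; multiplying over the $n$ copies gives $\prod_{\lambda}(x-\lambda)^{n}$. This accounts for $n(m-1)$ of the $nm$ eigenvalues. The remaining factor is $P_R(x)$, where $R$ is the $n\times n$ reduced matrix whose diagonal entries are $r_i\alpha_i+\beta_i+n_i\gamma_i=r$ and whose $(i,j)$-entry (for $i\neq j$) is
\[
m\cdot\frac{w_k}{k-1}\cdot m^{k-2}\cdot d_{ij}(H)\;=\;w_k\,m^{k-1}\cdot\frac{d_{ij}(H)}{k-1}.
\]

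The key observation — and the only nontrivial step — is to recognize that, because $H$ is $k$-uniform (and carries weight $1$ on each of its edges when viewed as the backbone), the definition of $A(H)$ from Section~\ref{Sec1} gives $A(H)_{ij}=\frac{d_{ij}(H)}{k-1}$ for $i\neq j$ and $A(H)_{ii}=0$. Therefore
\[
R \;=\; r\,I_n \;+\; w_k\,m^{k-1}\,A(H),
\]
whose eigenvalues are $r+w_k\,m^{k-1}\mu$ as $\mu$ ranges over $\sigma(A(H))$. Multiplying the two contributions yields $\prod_{\lambda}(x-\lambda)^{n}\prod_{\mu}(x-r-\mu\,m^{k-1}w_k)$, as claimed; a quick degree check ($n(m-1)+n=nm$) confirms no eigenvalues are missing. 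The most delicate point, and the one I would be careful about, is correctly identifying the entries of $A(H)$ with the off-diagonal entries of $R$, since a miscount of a factor of $k-1$ or $m$ would propagate into the final formula.
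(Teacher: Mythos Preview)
Your proof is correct and follows essentially the same route as the paper: invoke Theorem~\ref{t3.2} with the specializations $G_i=H'$, $n_i=m$, $r_i=r$, $\alpha_i=1$, $\beta_i=\gamma_i=0$, $\delta_{ij}=1$, $X=\{k\}$, $n_{ij}^{k}(e)=m^{k-2}$, and then recognize the reduced matrix as $R=rI_n+w_k m^{k-1}A(H)$. Your write-up is in fact more explicit than the paper's in justifying the identification of the off-diagonal entries of $R$ with those of $A(H)$ via the $k$-uniformity and the factor $1/(k-1)$.
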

	\begin{proof} The lexicographic product of $H$ and $H'$ can be viewed as a $(H,\mathcal{T})$-join of hypergraphs as mentioned in Table~\ref{Tab4}.
		So we take $G_i=H'$, $n_i=m$, $r_i=r$, $\alpha_i=1$, $\beta_i=\gamma_i=0$, $\delta_{ij}=1$ and $n_{ij}^k(e)=m^{k-2}$ for all $1\leq i\leq j\leq k$ in Theorem~\ref{t3.2}. Then the matrix $R$ becomes $rI_n+(w_km^{k-1})A(H)$. Since it is a polynomial in $A(H)$, the proof follows.
	\end{proof}

	\section*{Acknowledgement} The first author thanks University Grants Commission (UGC), Government of India for the financial support in the form of Junior Research Fellowship (NTA Ref. No.: 221610053976).


\end{document}